\documentclass[a4paper]{article}

\usepackage[plainpages=false, colorlinks=true, 
            linkcolor=black, urlcolor=black, citecolor=black]{hyperref}
\usepackage{geometry}
\usepackage[utf8]{inputenc}

\usepackage{amsmath, amssymb, amsthm, amsfonts, cite}
\usepackage{color}
\usepackage{booktabs}
\usepackage{tikz}
\usepackage{scrextend}
\usepackage{array}
\usepackage{arydshln}
\usepackage{multirow}
\usepackage[font=scriptsize, labelfont=bf]{caption}
\usepackage{authblk}
\usepackage{cite}

\usepackage{algpseudocode}
\usepackage{graphicx}
\usepackage{array, tabularx}
\usepackage{algorithm, algcompatible}
\usepackage{booktabs} 
\usepackage{paralist} 
\usepackage{verbatim} 
\usepackage{subfig} 
\usepackage{xcolor, marginnote, enumitem}
\usepackage[numbers, sort&compress]{natbib}

\numberwithin{equation}{section}

\theoremstyle{theorem}
\newtheorem{lemma}{Lemma}
\newtheorem{theorem}{Theorem}
\newtheorem{proposition}{Proposition}
\newtheorem{corollary}{Corollary}

\newtheorem{remark}{Remark}

\theoremstyle{definition}
\newtheorem{definition}{Definition}

\DeclareMathOperator{\gph}{gph}
\DeclareMathOperator{\dom}{dom}
\DeclareMathOperator{\dist}{dist}

\DeclareMathOperator*{\argmin}{arg\, min}

\DeclareMathOperator{\Div}{div}

\newcommand{\grad}{\nabla}

\newcommand{\lookUp}[1]{}

\newcommand{\bds}{\boldsymbol}

\newcommand{\mS}{\mathcal{S}}

\algnewcommand\INPUT{\item[\textbf{Input:}]}%
\algnewcommand\OUTPUT{\item[\textbf{Output:}]}%

\DeclareMathAlphabet{\mathbfit}{OML}{cmm}{b}{it}

\newcommand{\highlight}[1]{\colorbox{white!90!black}{\ensuremath{#1}}}

\begin{document}

\title{A Preconditioned Alternating Minimization Framework for Nonconvex and Half Quadratic Optimization}

\author{Shengxiang Deng\thanks{Institute for Mathematical Sciences, 
		Renmin University of China,  China.
		Email: \href{mailto:2018103581@ruc.du.cn}{2018103581@ruc.du.cn}.} \quad 
	Ismail Ben Ayed \thanks{\'Ecole de Technolgie Superieure de Montr\'eal, Canada.
		Email: \href{mailto:ismail.benayed@etsmtl.ca}{ismail.benayed@etsmtl.ca}.}  \quad
	Hongpeng Sun\thanks{Institute for Mathematical Sciences, 
		Renmin University of China, China.	Email: \href{mailto:hpsun@amss.ac.cn}{hpsun@amss.ac.cn}.} }

\maketitle

\begin{abstract}
  For some typical and widely used non-convex half-quadratic regularization models and the Ambrosio-Tortorelli approximate Mumford-Shah model, based on the Kurdyka-\L ojasiewicz analysis and the recent nonconvex proximal algorithms, we developed an efficient preconditioned framework aiming at the linear subproblems that appeared in the nonlinear alternating minimization procedure.  Solving large-scale linear subproblems is always important and challenging for lots of alternating minimization algorithms. By cooperating the efficient and classical preconditioned iterations into the nonlinear and nonconvex optimization, we prove that only one or any finite times preconditioned iterations are needed for the linear subproblems without controlling the error as the usual inexact solvers. 
  The proposed preconditioned framework can provide great flexibility and efficiency for dealing with linear subproblems and guarantee the global convergence of the nonlinear alternating minimization method simultaneously.   
\end{abstract}

\paragraph{Key words.} Alternating minimization, half quadratic, nonconvex optimization, Kurdyka-\L ojasiewicz property,  linear preconditioner
\paragraph{AMS subject classifications.} 65K10, 90C25, 90C26, 65F08

\section{Introduction}

The aim of this paper is to develop a preconditioned framework to deal with linear subproblems for the nonlinear and nonconvex alternating minimization algorithms, while applying to some nonconvex half quadratic regularized problems or the Ambrosio-Tortorelli approximate Mumford-Shah model \cite{LT,MD}.  For the half quadratic models, we mainly focus on the truncated quadratic model, the Geman-McClure model, and the Hebert-Leahy model \cite{GR, GY, GM, HL}. The truncated quadratic models including the Geman-Reynolds type model \cite{GR} and Geman-Yang type model \cite{GY} are rooted from the Markov random fields. 
All the half quadratic models addressed here have statistical interpretations and we refer to \cite{BR,HRRS, GW} for more details. 
These nonconvex regularizations are widely used for image restorations, segmentation, stereo, optical flow, and so on, which have vast applications in  medical imaging, computer vision, and inverse problems \cite{BKS,BR, BZ, GW, HSS, MD}.

Due to the importance of these models, there are many theoretical and algorithmic studies on each of these models \cite{AA,BZ,CBAB1,CBAB2,Chamb, CIM, ID, HAD, MN, NN,SC}. For the truncated quadratic model, the graduated non-convexity algorithm was proposed in \cite{BZ} and see \cite{CLMS,MN} for recent developments. Difference of convex algorithm (DCA) \cite{HAD}, preconditioned DCA \cite{DS}, and the first-order primal-dual algorithm \cite{SC} are also developed for the truncated quadratic models. For the German-McClure model, Hebert-Leahy model, and Ambrosio-Tortorelli  model, there are also lots of analysis and algorithmic developments; see \cite{AA, BKS,BR, CBAB1, HSS} and so on.

Now, let's restrict our attention on the alternating minimization (AM) optimization. The alternating minimization algorithms are essentially the same as the majorize-minimize type algorithm including the expectation–maximization (EM) algorithm and the bound optimization \cite{AIG,CIM, DV, CM, LI, RYY, TMBB}. The general alternating minimization method for $L(\bds{u},\bds{y})$ reads as follows
\begin{equation}\label{eq:alter:model}
\inf_{\bds{u} \in X, \bds{y}\in Y} L(\bds{x}, \bds{y}):=f(\bds{u})+Q(\bds{u}, \bds{y})+g(\bds{y}).
\end{equation}
Given initial value $\bds{y}^0$ (or $\bds{x}^0$), iterate for $k=0,1,\cdots$, until some stopping criterion is fulfilled
\begin{subequations}\label{eq:alter:mini:orig}
	\begin{align}
	\bds{u}^{k+1} &\in\argmin_{\bds{u}\in X}L(\bds{u}, \bds{y}^k), \label{update:x:orig}\\
	\bds{y}^{k+1} &\in\argmin_{\bds{y} \in Y} L(\bds{u}^{k+1}, \bds{y}),\label{update:y:orig}
	\end{align}
\end{subequations}
where $f$ and $g$ are proper lower semicontinuous functions, $Q$ is a $C^1$ function with  local Lipschitz continuous gradient $\nabla Q$, and $X$, $Y$ are the corresponding finite dimensional Hilbert spaces. Throughout this paper, we assume all the variables, the spaces, and the operators including the integrals setting are all finite dimensional.  For the half quadratic models, \eqref{eq:alter:model} can be designed for the following equivalent  minimization problem, where the pioneering work can be find in \cite{GR, GY}:
\begin{equation}\label{eq:fun:original}
\inf_{\bds{u} \in X} F(\bds{u}), \quad  F(\bds{u}):=D(\bds{u}) + R(\bds{u})  = \inf_{\bds{y} \in Y}L(\bds{u},\bds{y}).
\end{equation}
Here $D(\bds{u})$ is the data term and $R(\bds{u})$ is the regularization term. 
Instead of minimizing the original functional \eqref{eq:fun:original}, it is convenient to minimizing \eqref{eq:alter:model} with the auxiliary variable $\bds{y}$. In discrete optimization including the binary optimization, the auxiliary variable $\bds{y}$ is called ``line process" in \cite{BZ, GR, GY} or more general ``outlier process" \cite{BR}. These line or outlier processes are popular and widely used, not only because they  have physical or statistic intuition and the ability to model spatial properties of discontinuities \cite{BR} but also they can make the optimization procedure more efficient and stable. Actually, one can get \eqref{eq:alter:model} from the  original \eqref{eq:fun:original} by the Fenchel-Rockafellar duality theory. We refer to the Geman-Reynolds model \cite{GR} and Geman-Yang model \cite{GY} which are two typical and different duality processes  to get \eqref{eq:alter:model} from \eqref{eq:fun:original}.  In \cite{AA}, the alternating minimization focused on convex regularization term $R(\bds{u})$ is discussed.
In \cite{CBAB1,CBAB2, AIG} and the thesis \cite{Chamb}, two alternating minimization algorithms including ``ARTUR" and ``LEGEND" algorithms were developed with global convergence analysis  for convex models with convex regularizations.
Although there are rich studies on alternating minimization for the half quadratic models or the Ambrosio-Tortorelli model, however, solving the large scale linear subproblems for $\bds{u}$ (or $\bds{y}$) is still very challenging. Recent attempts can be found in \cite{CIM,LI} where conjugate gradient method with line search as an inexact solver was developed.  Based on the weighted quadratic approximations on $F(\bds{u})$, truncated conjugate gradient method with error control was also developed for $C^1$ function $F(\bds{u})$ in \cite{RYY}. 

Inspired by the novel preconditioning techniques developed for linear subproblems that appeared in nonlinear convex optimizations \cite{BS1,BS2}, the new development of Kurdyka-\L ojasiewicz (KL) analysis, and the proximal  nonconvex optimization \cite{AB,ABRC,ABS, BST}, we proposed a preconditioned framework for the nonconvex alternating minimization  aiming at dealing with the linear problems  especially large-scale problems efficiently for \eqref{eq:alter:model}. 
Our contributions belong to
the following folds. First, we proposed a preconditioned framework that can deal with any linear subequation of $\bds{u}$ or $\bds{y}$. Any finite number of preconditioned iterations can guarantee the convergence of the whole nonlinear alternating minimization algorithm without solving the linear subequations with middle or high accuracy, which is different from the inexact solvers through error control. Especially, for the Ambrosio-Tortorelli model, preconditioned iterations can be employed for both linear equations involving $\bds{u}$ and $\bds{y}$. Second, with the analysis of the semialgebraic sets and o-minimal structure, we prove that all the functions $L(\bds{u},\bds{y})$ of discussed models are KL functions. Together with the boundedness of the iteration sequence, we obtain the global convergence of the proposed preconditioned alternating minimization by the recent developments of proximal minimization algorithms \cite{AB,ABRC,ABS, BST}. Third, our preconditioned framework can deal with the nonsmooth and nonconvex truncated quadratic problems efficiently with global convergence, while they are precluded by convex assumptions as in \cite{AA, AIG,CBAB2}  or $C^1$ conditions as in \cite{RYY}. Fourth, we developed several efficient red-black Gauss-Seidel preconditioners for both isotropic and anisotropic  equations in divergence form by finite difference method \cite{LZ, WE}. Numerical tests show that one can get lower energy and better reconstructions more efficiently with the proposed preconditioned framework compared with solving the linear system of \eqref{eq:alter:mini:orig} with moderate accuracy and  without any proximal terms.

The rest of this paper is organized as follows. In section \ref{sec:KL:models}, we give a brief introduction to the half quadratic models, the Ambrosio-Tortorelli model, and the basic KL analysis. In section \ref{sec:pre:KL}, we give an illustration of the motivation of our preconditioned framework and prove that all $L(\bds{u},\bds{y})$ of discussed models are KL functions.  In section \ref{sec:convergence}, we first prove the iterative sequence $\{(\bds{u}^k,\bds{y}^k)\}$ is bounded for each model. We then get the global convergence with the KL properties by \cite{AB,ABRC,ABS, BST}. 
In section \ref{sec:numer}, we first give the detailed five-point stencils of the symmetric Gauss-Seidel iterations along with the preconditioners and the numerical experiments then follows.  In  section \ref{sec:conclude}, we give some discussions and conclusions.

\section{Nonconvex half quadratic models and KL functions}\label{sec:KL:models}

For the half quadratic models, let's begin with the following truncated quadratic regularization with gradient operator $\nabla :=[\nabla_1,\nabla_2]$ for image restoration \cite{BZ,BR,MD, MN}
\begin{align}
&\inf_{\bds{u} \in X} F(\bds{u} ) =D(\bds{u}) + \int_{\Omega}P^I(\bds{u} )\mathrm{d}\sigma ,\quad P^I(\bds{u} ):=\frac{\mu}{2}\min(|\nabla \bds{u} |^2,\frac{\lambda}{\mu }), \label{equation:iso}  \\
&\inf_{\bds{u}  \in X} F(\bds{u} ) = D(\bds{u}) + \int_{\Omega}P^A(\bds{u})\mathrm{d}\sigma, \quad P^A(\bds{u}):=\frac{\mu}{2}\sum_{i=1}^2\min(|\nabla_i \bds{u} |^2, \frac{\lambda}{\mu }), \label{equation:ani} 
\end{align}
where $\lambda$ and $\mu$ are positive constants, $X$ is finite dimensional discrete image space and henceforth $D(\bds{u})$ is the following data term  with $A$ being a linear and bounded operator 
\[
D(\bds{u}): = {\|A\bds{u}  -\bds{u} _0\|_2^2}/{2}.
\]
$P^I$ or $P^A$ is the isotropic or anisotropic truncated regularizations. Let's introduce the image space $X$ and the dual spaces $Y$ and $Z$, i.e.,
\begin{equation}
X =  \{\bds{u} : \Omega \rightarrow \mathbb{R} \}, \quad Y = \{\bds{u} : \Omega \rightarrow \mathbb{R}^2\}, \quad Z = \{\bds{u}  :\Omega \rightarrow \mathbb{R}\},
\end{equation}
where $\Omega$ is the image domain.
The truncated quadratic model \eqref{equation:iso} can be reformulated as the following Geman-Reynolds model \cite{GR},
\begin{equation}\label{eq:b;truncated:qua}
\inf_{\bds{u} \in X, \bds{b} \in Z}  L_{GR}(\bds{u} , \bds{b} ) : =   D(\bds{u})+\frac{\lambda}{2} \int_{\Omega}\left[ \bds{b}(\frac{\mu}{\lambda} |\nabla \bds{u} |^2- 1) + I_{\{0\leq \bds{b}  \leq 1\}}(\bds{b} ) +1\right] \mathrm{d}\sigma, 
\end{equation}
or the following Geman-Yang model \cite{GY,MD}: $ \inf_{\bds{u}  \in X, \bds{l} \in Y}L_{GY}(\bds{u}, \bds{l} )$, $\bds{l} =(\bds{l}_1,\bds{l}_2)^T$ with $L_{GY}(\bds{u} , \bds{l} )$ defined by, 
\begin{equation}\label{eq:L;truncated:qua}
D(\bds{u}) + \frac{\mu}{2} \|\nabla \bds{u} -\bds{l} \|_2^2 +  \mu \int_{\Omega}H(\bds{l} ;\frac{\lambda}{\mu}) \mathrm{d}\sigma, \  H(\bds{l} ;\frac{\lambda}{\mu}): =\begin{cases}
\sqrt{\frac{\lambda}{\mu}} |\bds{l} | -\frac{|\bds{l} |^2}{2}, \ |\bds{l} | \leq  \sqrt{\frac{\lambda}{\mu}},  \\
\frac{\lambda}{2\mu}, \ \quad |\bds{l} | >  \sqrt{\frac{\lambda}{\mu}}.
\end{cases}
\end{equation}
For the anisotropic case, similarly, we can reformulate \eqref{equation:ani} as 
\begin{equation}\label{eq:b;truncated:qua:ani}
\inf_{\bds{u}  \in X, \bds{b} \in Y}   L_{GR}^A(\bds{u} , \bds{b} ): =  D(\bds{u}) +\frac{\lambda}{2}\sum_{i=1}^2\int_{\Omega}\left[ \bds{b}_i(\frac{\mu}{\lambda} |\nabla_i \bds{u} |^2- 1) + I_{\{0\leq \bds{b}_i \leq 1\}}(\bds{b}_i)+1\right]\mathrm{d}\sigma, 
\end{equation}
and
\begin{equation}\label{eq:L;truncated:qua:ani}
\inf_{\bds{u}  \in X, \bds{l} \in Y}L_{GY}^A(\bds{u} , \bds{l} ):=D(\bds{u})+\sum_{i=1}^2 \big(\frac{\mu}{2} \|\nabla_i \bds{u} -\bds{l}_i\|_2^2 +  \mu \int_{\Omega}H(\bds{l}_i; \frac{\lambda}{\mu})\mathrm{d}\sigma\big).
\end{equation}
The following German-McClure model is also widely used \cite{GM,GR}
\begin{equation}\label{eq:gm:original}
\inf_{\bds{u} \in X} F_{GM}(\bds{u} ):=D(\bds{u})+ \frac{\mu}{2} \int_{\Omega}\frac{|\nabla \bds{u} |^2/\lambda}{1+ |\nabla \bds{u} |^2/\lambda}\mathrm{d}\sigma,
\end{equation}
which is equivalent to the following minimization problem \cite{BR, Chamb}
\begin{equation}\label{eq:gm:b}
\inf_{\bds{u}  \in X, \bds{b} \in Z}L_{GM}(\bds{u} , \bds{b} ): = D(\bds{u})+ \frac{\mu}{2} \int_{\Omega}\left[  \bds{b}  \frac{|\nabla \bds{u} |^2}{\lambda} +\bds{b} -2\sqrt{\bds{b} }+1  \right]\mathrm{d}\sigma, \quad \bds{b}  \geq 0.
\end{equation}
Similarly,  the anisotropic German-McClure model can be
\begin{equation}\label{eq:gm:b:ani}
\inf_{\bds{u}  \in X, \bds{b} \in Y}L_{GM}(\bds{u} , \bds{b} ): = D(\bds{u})+ \sum_{i=1}^2\frac{\mu}{2} \int_{\Omega}\left[  \bds{b}_i \frac{|\nabla_i \bds{u} |^2}{\lambda} +\bds{b}_i-2\sqrt{\bds{b}_i}+1  \right]\mathrm{d}\sigma,
\end{equation}
where $ \bds{b} =( \bds{b}_1,\bds{b}_2)$ and $\bds{b}_i \geq 0$, $i=1,2$. Now, let's turn to the Hebert-Leahy model which reads as follows \cite{HL}
\begin{equation}\label{eq:HL:orignal}
\inf_{\bds{u} \in X} F_{HL}(\bds{u} ):=D(\bds{u}) + \frac{\mu}{2} \int_{\Omega} \log(1+\frac{|\nabla \bds{u} |^2}{\lambda})\mathrm{d}\sigma, 
\end{equation}
which is equivalent to the following minimization problem \cite{BKS, BR, Chamb}
\begin{equation}\label{eq:hl:b}
\inf_{\bds{u}  \in X, \bds{b} \in Z}L_{HL}(\bds{u} , \bds{b} ):=D(\bds{u})+\frac{\mu}{2} \int_{\Omega}(\bds{b} \cdot\frac{|\nabla \bds{u} |^2}{\lambda}+\bds{b}  - \log(\bds{b} ) -1)\mathrm{d}\sigma, \quad \bds{b} >0.
\end{equation}
Similarly, the anisotropic Hebert-Leahy model can be
\begin{equation}\label{eq:hl:b:ani}
\inf_{\bds{u}  \in X, \bds{b} \in Y}L_{HL}(\bds{u} , \bds{b} ):=D(\bds{u})+\sum_{i=1}^2\frac{\mu}{2} \int_{\Omega}(\bds{b}_i\cdot\frac{|\nabla_i \bds{u} |^2}{\lambda}+\bds{b}_i - \log(\bds{b}_i) -1)\mathrm{d}\sigma,
\end{equation}
where $ \bds{b} =( \bds{b}_1,\bds{b}_2)$ and $\bds{b}_i \geq 0$, $i=1,2$. 
We will also discuss the following Ambrosio–Tortorelli approximation of the Mumford-Shah model \cite{LT}
\begin{align}
\inf_{\bds{u}  \in X, \bds{s}  \in Z} L_{MS}(\bds{u} ,\bds{s} ):=& \frac{\|\bds{u} -\bds{u}_0\|_{2}^2}{2} + \alpha  \int_{\Omega} \bds{s}^2 |\nabla \bds{u} |^2 \mathrm{d}\sigma \notag\\
&+  \lambda \left( \varepsilon \int_{\Omega} |\nabla \bds{s} |^2 \mathrm{d}\sigma+ \int_{\Omega} \frac{1}{4\varepsilon} |\bds{s} -1|^2 \mathrm{d}\sigma\right). \label{eq:ms:b}
\end{align}

Based on difference of convex functions, we will give another interpretation of the equivalence of the truncated model \eqref{equation:iso} and \eqref{eq:L;truncated:qua} compared to \cite{GY,MD}. The equivalence of the anisotropic cases \eqref{equation:ani} and \eqref{eq:L;truncated:qua:ani} is similar and  omitted here. Let's begin with the following lemma.
\begin{lemma}\label{lem:dual:max}
	Denote $h_a(\bds{t} ) := \max(|\bds{t}|^2/{2}, {a}/{2})$ which is a convex function with positive constant $a$ and $\bds{t}\in Y$. The Fenchel dual function of $h_a(\bds{t} )$  is
	\begin{equation}\label{eq:dual:max:func}
	h_a^*(\bds{l} ) = \begin{cases}
	\sqrt{a}|\bds{l} | -a/2, \quad |\bds{l} | \leq \sqrt{a}, \\
	{|\bds{l} |^2}/{2}, \quad |\bds{l} | >  \sqrt{a}. 
	\end{cases}
	\end{equation}
\end{lemma}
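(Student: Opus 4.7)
The plan is to compute $h_a^*(\bds{l})= \sup_{\bds{t}\in Y}\{\scp{\bds{l}}{\bds{t}} - h_a(\bds{t})\}$ directly from the definition of the Fenchel conjugate by splitting the supremum over the two regions where the pointwise maximum in $h_a$ is attained.

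First I would introduce the partition $Y = B \cup B^c$ with $B = \{\bds{t}: |\bds{t}|\le \sqrt a\}$. On $B$, $h_a(\bds{t}) = a/2$ is constant, so the inner supremum becomes the linear optimization $\sup_{|\bds{t}|\le\sqrt a}\scp{\bds{l}}{\bds{t}}-a/2$, which by Cauchy--Schwarz equals $\sqrt a\,|\bds{l}| - a/2$ (attained at $\bds{t} = \sqrt a\,\bds{l}/|\bds{l}|$ when $\bds{l}\ne 0$, and trivially at $\bds{t}=0$ otherwise). On $B^c$, $h_a(\bds{t}) = |\bds{t}|^2/2$, so the inner supremum becomes $\sup_{|\bds{t}|>\sqrt a}\scp{\bds{l}}{\bds{t}}-|\bds{t}|^2/2$; the unconstrained maximizer is $\bds{t}=\bds{l}$ with value $|\bds{l}|^2/2$, which lies in $B^c$ exactly when $|\bds{l}|>\sqrt a$. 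When $|\bds{l}|\le \sqrt a$ the concave quadratic is maximized on the closure boundary $|\bds{t}|=\sqrt a$, again yielding $\sqrt a\,|\bds{l}|-a/2$.

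Next I would assemble the two cases by taking the larger of the two regional suprema. For $|\bds{l}|\le\sqrt a$ both regions contribute $\sqrt a\,|\bds{l}|-a/2$, so $h_a^*(\bds{l})=\sqrt a\,|\bds{l}|-a/2$. For $|\bds{l}|>\sqrt a$ the difference
\[
\tfrac{|\bds{l}|^2}{2} - \bigl(\sqrt a\,|\bds{l}|-\tfrac{a}{2}\bigr) = \tfrac{1}{2}(|\bds{l}|-\sqrt a)^2 > 0
\]
shows that the $B^c$-region wins, giving $h_a^*(\bds{l})=|\bds{l}|^2/2$. This matches \eqref{eq:dual:max:func}.

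There is no real obstacle: the only mildly delicate point is handling the boundary $|\bds{t}|=\sqrt a$ (which lies in $B$, not $B^c$) and checking that a maximizer exists there as a limit when $|\bds{l}|\le\sqrt a$, so that the supremum over the open set $B^c$ coincides with the supremum over its closure. As a brief sanity check I would also verify the formula via an alternative route: writing $h_a(\bds{t}) = \tfrac{a}{2} + \bigl(\tfrac{|\bds{t}|^2}{2}-\tfrac{a}{2}\bigr)_+$ and using that $\bigl(\tfrac{|\cdot|^2}{2}-\tfrac{a}{2}\bigr)_+$ has conjugate $\tfrac{|\bds{l}|^2}{2}$ on $|\bds{l}|>\sqrt a$ and $\sqrt a\,|\bds{l}|-a$ on $|\bds{l}|\le\sqrt a$; adding the constant $a/2$ (the conjugate of a constant being $-$constant on $\{0\}$, but here the shift just translates the value) recovers \eqref{eq:dual:max:func}.
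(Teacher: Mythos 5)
Your proof is correct and reaches \eqref{eq:dual:max:func} by the same elementary device---direct evaluation of the supremum defining the Fenchel conjugate via case analysis---but your decomposition is organized differently from the paper's. You partition the $\bds{t}$-domain by which branch of the pointwise maximum is active ($B=\{\bds{t}:|\bds{t}|\le\sqrt{a}\}$ versus $B^c$), compute both regional suprema for every $\bds{l}$, and take the larger one; the delicate point that $B^c$ is open is handled correctly by passing to its closure via continuity. The paper instead fixes the case of $|\bds{l}|$ first and shrinks the feasible set of $|\bds{t}|$ to a compact interval ($[0,\sqrt{a}]$ when $|\bds{l}|\le\sqrt{a}$, and $[0,2|\bds{l}|]$ when $|\bds{l}|>\sqrt{a}$) by observing that $|\bds{l}|\,|\bds{t}|\le h_a(\bds{t})$ outside that interval, and then maximizes on the restricted set. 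Your route is the more systematic of the two and makes the comparison $\tfrac{1}{2}(|\bds{l}|-\sqrt{a})^2>0$ between the two candidate values explicit, which the paper leaves implicit; the paper's route avoids discussing a supremum over an open region altogether. One small caveat: in your closing ``sanity check'' the stated conjugate of $\bigl(|\cdot|^2/2-a/2\bigr)_+$ is off by an additive constant---it should be $|\bds{l}|^2/2+a/2$ for $|\bds{l}|>\sqrt{a}$ and $\sqrt{a}\,|\bds{l}|$ for $|\bds{l}|\le\sqrt{a}$, so that subtracting the constant $a/2$ recovers \eqref{eq:dual:max:func}---but this aside does not affect the main argument, which stands on its own.
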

\begin{proof}
	It can be checked that $|\bds{l}||\bds{t}| \leq h_a(\bds{t} ) $ when $|\bds{l}| \leq \sqrt{a}$, since the curve $|\bds{l}||\bds{t}|$ is below $\max(|\bds{t}|^2/{2}, {a}/{2})$ as functions of $|\bds{t}|$ for fixed $\bds{l}$. While $|\bds{l}| \leq \sqrt{a}$, we thus have
	\begin{align*}
	\sup_{\bds{t} \in Y }&\left\{\langle \bds{t},  \bds{l}\rangle -\max({|\bds{t}|^2}/{2}, {a}/{2}) \right\}= 	\sup_{|\bds{t}|\in [0, \sqrt{a}]} \left\{\langle \bds{t},  \bds{l}\rangle -\max({|\bds{t}|^2}/{2}, {a}/{2})\right\} \\
	& = \sup_{|\bds{t}|\in [0, \sqrt{a}]} \left\{\langle \bds{t},  \bds{l}\rangle -{a}/{2}\right\} = \sup_{|\bds{t}| = \sqrt{a}} \left\{\langle \bds{t},  \bds{l}\rangle -{a}/{2}\right\}= |\bds{l}|\sqrt{a} -{a}/{2}. 
	\end{align*}
	While $|\bds{l}| > \sqrt{a}$, it can be verified that $|\bds{l}||\bds{t}|=h_a(t)$  while $|\bds{t}| = \frac{a}{2|\bds{l}|}$ or $2|\bds{l}|$. We thus conclude that, while $|\bds{l}| > \sqrt{a}$, 
	\begin{align*}
	\sup_{\bds{t}\in Y}&\left\{\langle \bds{t},  \bds{l}\rangle -\max({|\bds{t}|^2}/{2}, {a}/{2}) \right\}= 	\sup_{|\bds{t}|\in [0, 2|\bds{l}|]} \left\{\langle \bds{t},  \bds{l}\rangle -\max({|\bds{t}|^2}/{2}, {a}/{2})\right\} \\
	& = \sup_{|\bds{t}|\in [\sqrt{a}, 2|\bds{l}|]} \left\{\langle \bds{t},  \bds{l}\rangle -{|\bds{t}|^2}/{2}\right\} = \left\{\langle \bds{t},  \bds{l}\rangle -{|\bds{t}|^2}/{2}\right\}|_{{|\bds{t}| = |\bds{l}|} }=  {|\bds{l}|^2}/{2}. 
	\end{align*}
	With the discussion of the two cases above, we get \eqref{eq:dual:max:func}. 	
\end{proof}

\begin{lemma}
	The truncated models \eqref{equation:iso} and \eqref{eq:L;truncated:qua} are equivalent. 
\end{lemma}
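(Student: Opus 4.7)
The plan is to show that minimizing $L_{GY}(\bds{u},\bds{l})$ in \eqref{eq:L;truncated:qua} over $\bds{l}$ pointwise recovers the truncated regularizer in \eqref{equation:iso}. Concretely, I will prove the pointwise identity
\begin{equation*}
\tfrac{\mu}{2}\min(|\bds{t}|^2,\lambda/\mu) \;=\; \inf_{\bds{l}\in\mathbb{R}^2}\Big\{\tfrac{\mu}{2}|\bds{t}-\bds{l}|^2 + \mu H(\bds{l};\lambda/\mu)\Big\}
\end{equation*}
for every $\bds{t}\in\mathbb{R}^2$; then substituting $\bds{t}=\nabla\bds{u}$ at each pixel, integrating over $\Omega$, adding $D(\bds{u})$, and exchanging integral with pointwise infimum (valid since $\bds{l}$ is a free variable at every point) yields the equivalence of \eqref{equation:iso} and \eqref{eq:L;truncated:qua}.

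The key observation is the elementary identity $\min(x,y)=x+y-\max(x,y)$, which with $a:=\lambda/\mu$ gives
\begin{equation*}
\min\!\big(\tfrac{|\bds{t}|^2}{2},\tfrac{a}{2}\big) \;=\; \tfrac{|\bds{t}|^2}{2} + \tfrac{a}{2} - h_a(\bds{t}),
\end{equation*}
where $h_a$ is the convex function of Lemma \ref{lem:dual:max}. Since $h_a$ is convex and lower semicontinuous, Fenchel--Moreau duality gives $h_a(\bds{t})=\sup_{\bds{l}}\{\langle \bds{t},\bds{l}\rangle-h_a^*(\bds{l})\}$, so
\begin{equation*}
\min\!\big(\tfrac{|\bds{t}|^2}{2},\tfrac{a}{2}\big) \;=\; \inf_{\bds{l}}\Big\{\tfrac{|\bds{t}|^2}{2} - \langle\bds{t},\bds{l}\rangle + h_a^*(\bds{l}) + \tfrac{a}{2}\Big\}.
\end{equation*}
Completing the square via $\tfrac{|\bds{t}|^2}{2} - \langle\bds{t},\bds{l}\rangle = \tfrac{|\bds{t}-\bds{l}|^2}{2} - \tfrac{|\bds{l}|^2}{2}$ reduces the problem to verifying that
\begin{equation*}
h_a^*(\bds{l}) - \tfrac{|\bds{l}|^2}{2} + \tfrac{a}{2} \;=\; H(\bds{l};a).
\end{equation*}
This is a direct case check using the explicit formula for $h_a^*$ from Lemma \ref{lem:dual:max}: when $|\bds{l}|\le\sqrt{a}$, $h_a^*(\bds{l})=\sqrt{a}|\bds{l}|-a/2$, giving $\sqrt{a}|\bds{l}|-|\bds{l}|^2/2 = H(\bds{l};a)$; when $|\bds{l}|>\sqrt{a}$, $h_a^*(\bds{l})=|\bds{l}|^2/2$, giving $a/2 = H(\bds{l};a)$.

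Multiplying both sides of the resulting identity by $\mu$ produces the sought pointwise formula. The remaining step is to integrate against $\mathrm{d}\sigma$ and commute the infimum with the integral, which is justified because the minimization is separable: the optimal $\bds{l}(\sigma)$ depends only on $\nabla\bds{u}(\sigma)$. The main (minor) obstacle is keeping the case analysis tidy and tracking the scaling constant $a=\lambda/\mu$ through steps, but no essential difficulty arises once Lemma \ref{lem:dual:max} is invoked.
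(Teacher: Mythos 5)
Your proof is correct and follows essentially the same route as the paper: the same decomposition $\min = \mathrm{id} + \mathrm{const} - \max$, the same invocation of Lemma \ref{lem:dual:max} together with biconjugation to express $h_a$ as a supremum over $\bds{l}$, and the same completion of the square identifying $\mu h_a^*(\bds{l}) - \mu|\bds{l}|^2/2 + \lambda/(2\mu)\cdot\mu$ with $\mu H(\bds{l};\lambda/\mu)$. The only (cosmetic) difference is that you carry out the computation pointwise in $\bds{t}=\nabla\bds{u}$ rather than at the level of the DC split $F = F_1 - F_2$ used in the paper.
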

\begin{proof}
	For  \eqref{equation:iso},  it can be written as the difference of convex functions (DCA): $ F(\bds{u} ) = F_1(\bds{u})-F_2(\bds{u})$ with convex $F_1$ and $F_2$
	\begin{equation}\label{eq:l:minus:max}
	F_1(\bds{u}):= D(\bds{u})+  \int_{\Omega}(\frac{\mu}{2}|\nabla \bds{u} |^2 +\frac{\lambda}{\mu })d\sigma, \quad F_2(\bds{u}) =  \int_{\Omega}\frac{\mu}{2}\max(|\nabla \bds{u} |^2, \frac{\lambda}{\mu })\mathrm{d}\sigma. 
	\end{equation}
	By Lemma \ref{lem:dual:max}, we have
	\begin{equation}\label{eq:max:em:l1}
	\frac{\mu}{2}\max(|\nabla \bds{u} |^2, \frac{\lambda}{\mu }) = {\mu}\max(|\nabla \bds{u} |^2/2, \frac{\lambda}{\mu }/2) =\mu \sup_{\bds{l} \in Y} \left\{\langle \nabla \bds{u} , \bds{l}  \rangle - h_{\frac{\lambda}{\mu }}^*(\bds{l} )\right\}.
	\end{equation}
	Substituting  \eqref{eq:max:em:l1} into \eqref{eq:l:minus:max}, we get this lemma with direct calculation. 
\end{proof}

Henceforth, we will vectorize all the image variables $\bds{u} $, $\bds{u}_0$ and the auxiliary variable $\bds{y}$ including $\bds{l}$, $\bds{b}$ and $\bds{s}$ as $u\in \mathbb{R}^{MN}$, $u_0\in \mathbb{R}^{M_0N_0}$ and $y$ including ${l}$, ${b}$ and ${s}$  in the corresponding spaces and for the corresponding models. Besides, we will still use the linear operators $A$, $\nabla_i$, $i=1,2$ as their corresponding matrix versions after vectorizing all the variables. Now, we will touch some necessary tools from convex and variational analysis \cite{MB,  CL, Roc1}.
The {graph} of a multivalued mapping $F : R^n \Rightarrow R^m$ is defined by
\[
\gph F := \{(x,y) \in R^n \times R^m : y \in F(x)\}, 
\]
whose {domain} is defined by $ \dom F: = \{ x \ | \ F(x) \neq \emptyset \} $. 
Similarly the graph of an extended real-valued function $f :R^n \to R \cup \{+\infty\}$ is defined by 
\[
\gph f := \{(x,s) \in R^n\times R : s = f(x)\}. 
\]
Let $h: \mathbb{R}^n \rightarrow \mathbb{R}\cup \{+\infty\}$ be a proper lower semicontinuous function. Denote $\dom h: = \{ x \in\mathbb{R}^n: \ h(x) < +\infty  \}$. 
For each $x\in\dom f$, 
the limiting-subdifferential of $h$ at $x\in \mathbb{R}^n$, written $\partial f$, is defined as follows \cite{MB, Roc1}, 
\begin{align}
\partial h(x): = \bigg\{ & \xi \in \mathbb{R}^n: \exists x_n \rightarrow x,  h(x_n) \rightarrow h(x),  \xi_n  \rightarrow \xi,  \notag   \\
&\lim_{y\rightarrow x } \inf_{y \neq x_n}\frac{h(y)-h(x_n) - \langle \xi_n, y-x_n \rangle }{|y-x_n|}\geq 0 \bigg\}.  \notag
\end{align}
It is known that the above subdifferential $\partial h$  reduces to the classical subdifferential in convex analysis when $h$ is convex. It can be seen that a necessary condition for $x \in \mathbb{R}^n$ to be a minimizer of $h$ is
$0 \in \partial h$ \cite{AB}. 

For the global and local convergence analysis, we also need the Kurdyka-\L ojasiewicz (KL) property and KL exponent. While the KL properties can help obtain the global convergence of iterative sequences,  the KL exponent can help provide a local convergence rate. 
\begin{definition}[KL property and KL exponent] \label{def:KL}
	A proper closed function $h$ is said to satisfy the KL property at $\bar x \in \dom \partial h$ if there exists $a \in (0,+\infty]$, a neighborhood $\mathcal{O}$ of $\bar x$, and a continuous concave function $\psi: [0,a) \rightarrow (0,+\infty)$ with $\psi(0)=0$ such that:
	\begin{itemize}
		\item [\emph{{(i)}}] $\psi$ is continuous differentiable on $(0,a)$ with $\psi'>0$. 
		\item [\emph{{(ii)}}] For any $x \in \mathcal{O}$ with $h(\bar x) < h(x) <h(\bar x) + a$, one has
		\begin{equation}\label{eq:kl:def}
		\psi'(h(x)-h(\bar x)) \dist(0,\partial h(x)) \geq 1. 
		\end{equation}
	\end{itemize}
	A proper closed function $h$ satisfying the KL property at all points in $\dom \partial h$ is called a KL function. If $\psi$ in \eqref{eq:kl:def} can be chosen as $\psi(s) = cs^{1-\theta}$ for some $\theta \in [0,1)$ and $c>0$, we say that $h$ satisfies KL properties at $\bar x$ with exponent $\theta$. This means that for some $\bar c >0$, we have
	\begin{equation}\label{eq:KL:exponent:theta:exam}
	\dist(0,\partial h(x)) \geq \bar c (h(x)-h(\bar x))^{\theta}. 
	\end{equation}
	If $h$ satisfies KL property with exponent $\theta \in [0,1)$ at all the points of $\dom \partial h$, we call $h$ is a KL function with exponent $\theta$. 
\end{definition}

For KL functions, the semialgebraic functions and definable functions in an o-minimal structure provide a vast field of applications including the KL analysis for our models to be discussed. 
\begin{definition}[Semialgebraic set and Semialgebraic function \cite{ABS}]\label{definition:semialgebraic}
	A subset $S$ of $\mathbb{R}^n$ is called a real semialgebraic set if there exists a finite number of real polynomial functions $P_{i,j}, \ Q_{i,j}: \mathbb{R}^n \rightarrow \mathbb{R}$, such that 
	\[
	S = \bigcup_{j=1}^p \bigcap_{j=1}^q\{ x \in \mathbb{R}^n: P_{i,j} =0, \ Q_{i,j} >0\}.
	\]
	A function $ f:\mathbb{R}^n\to\mathbb{R}\cup\{+\infty\}$ is semialgebraic if its graph is a semialgebraic set of $\mathbb{R}^{n+1}$. 
\end{definition}
A very useful conclusion is that a semialgebraic function has the KL property with $\psi(s)=cs^{1-\theta}$ for some $\theta\in\left[ 0,1\right) \cap \mathbb{Q}$ and $ c>0$, which can be seen as a corollary of Theorem 3.2 of \cite{BDL}. The following Tarski-Seidenberg theorem is very useful for the  analysis of  KL properties.

\begin{theorem}[Tarski-Seidenberg \cite{ABS}]\label{Tarski-Seidenberg}
	Let $S$ be a semialgebraic set in $\mathbb{R}^{m+n}$, then
	\[ 
	\bar{S}:=\{x\in\mathbb{R}^m:(x,y)\in S \   \text{for some}  \  y\in\mathbb{R}^n\}
	\]    
	is a semialgebraic set.
\end{theorem}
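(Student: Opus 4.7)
The plan is to prove the Tarski-Seidenberg theorem by induction on $n$. Since the projection $\mathbb{R}^{m+n} \to \mathbb{R}^m$ can be decomposed as a composition of $n$ single-coordinate projections, it suffices to treat the base case $n=1$, i.e.\ to show that if $S \subset \mathbb{R}^{m+1}$ is semialgebraic, then its projection to $\mathbb{R}^m$ is semialgebraic. Because unions, intersections, and complements of semialgebraic sets are semialgebraic (directly from Definition \ref{definition:semialgebraic}), and because projection commutes with unions, it further suffices to consider a single basic semialgebraic set
\begin{equation*}
B = \{(x,y)\in\mathbb{R}^{m}\times\mathbb{R}: P_{i}(x,y)=0,\ Q_{j}(x,y)>0, \ i=1,\dots,r,\ j=1,\dots,s\},
\end{equation*}
where each $P_i$ and $Q_j$ is a polynomial in $(x,y)$. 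I would view these as polynomials in $y$ whose coefficients are polynomials in $x$.

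The core of the argument is a sign-determination statement: given a finite family $f_1,\dots,f_N \in \mathbb{R}[y]$ (which for us will be $P_1,\dots,P_r,Q_1,\dots,Q_s$ together with their derivatives in $y$), there is a finite list of ``admissible'' sign patterns $\sigma \in \{-1,0,+1\}^{N}$ that are actually realized by some $y\in\mathbb{R}$, and each ``realizable'' condition can be expressed as a Boolean combination of polynomial inequalities in the coefficients of $f_1,\dots,f_N$. I would establish this by induction on the total degree of the family $f_1,\dots,f_N$, using Thom's lemma to show that each nonempty sign-stratum in $y$ is a single interval determined by the signs of the $f_i$ and their iterated derivatives, and then reading off the endpoint/interior conditions in terms of leading coefficients, discriminants, and subresultants (all of which are polynomials in the $x$-coefficients). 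Specializing the coefficients back to polynomials in $x$, this produces a semialgebraic description of $\bar B \subset \mathbb{R}^m$.

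A technical subtlety I would handle carefully is the ``degree drop'' phenomenon: as $x$ varies, the leading coefficient of $P_i(x,y)$ in $y$ may vanish, and then the polynomial effectively has smaller degree. I would split the parameter space $\mathbb{R}^m$ according to the vanishing/non-vanishing of these leading coefficients (a partition into semialgebraic pieces) and apply the sign-determination result separately on each piece, then take the union. The computation of subresultants stays polynomial in the $x$-coefficients on each stratum, so each piece contributes a semialgebraic subset of $\mathbb{R}^m$.

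The main obstacle is the sign-determination lemma itself, which is the substance of the theorem — reducing the existential quantifier over $y$ to an explicit semialgebraic condition on the $x$-coefficients. Once that combinatorial/algebraic core is in place (via Thom's lemma and subresultants, or equivalently via the Cohen--Hörmander procedure or a cylindrical algebraic decomposition in one variable), the overall structure — induction on $n$, reduction to basic sets, case split on degree drops, and reassembly via finite unions — is routine. I would then invoke this result freely in the paper to show that the sets arising from the half-quadratic and Ambrosio--Tortorelli functionals $L(\bds{u},\bds{y})$ are semialgebraic and therefore, by the corollary of Theorem 3.2 of \cite{BDL}, satisfy the KL property.
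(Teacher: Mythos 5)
The paper offers no proof of this statement: it is quoted as the classical Tarski--Seidenberg theorem with a citation to \cite{ABS} and used as a black box, so there is nothing in the paper to compare your argument against. That said, your outline follows the standard route to the theorem: reduce to projecting out one coordinate at a time, use closure of semialgebraic sets under finite unions (and the fact that projection commutes with unions) to reduce to a single basic set defined by a conjunction of sign conditions, and then eliminate the existential quantifier over the last variable $y$ by a sign-determination argument on the family $P_1,\dots,P_r,Q_1,\dots,Q_s$ viewed as polynomials in $y$ with coefficients in $\mathbb{R}[x]$, stratifying $\mathbb{R}^m$ by the vanishing of leading coefficients to control degree drops. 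None of these structural steps would fail, and you correctly treat the whole conjunction at once rather than trying to project each condition separately (projection does not commute with intersection).

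The genuine gap is the one you name yourself: the sign-determination lemma is not an auxiliary technicality but the entire mathematical content of the theorem, and you only gesture at the tools (Thom's lemma, subresultants, the Cohen--H\"ormander procedure, one-variable cylindrical decomposition) without carrying any of them out. The assertion that ``each realizable sign pattern can be expressed as a Boolean combination of polynomial inequalities in the coefficients'' is precisely the one-variable case of quantifier elimination, i.e.\ the base case of the theorem; until the induction on degree (say, on the family closed under $y$-derivatives and pseudo-remainders) is actually executed, the proposal is a correct plan with a pointer to where the work lives rather than a proof. For the purposes of this paper that is defensible --- the authors also take the result on faith from the literature --- but a self-contained write-up would need the elimination step in full, after which the application to showing that the graphs of $L_{GY}$, $L_{GR}$, $L_{GM}$, and $L_{MS}$ are semialgebraic (and hence KL by the corollary of Theorem 3.2 of \cite{BDL}) goes through exactly as the paper uses it.
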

For lots of cases, when the Tarski-Seidenberg theorem is not applicable, we can turn to the o-minimal structure which is originated from real algebraic geometry and can cover more complicated cases. We refer to \cite{Dr,ABRC} for its definition. Verifying the 
o-minimal structure directly with the definition  is much more complicated compared to verifying the semialgebraic set and we will focus on the existed results that can be employed directly. 
Let $\Delta$ be an o-minimal structure. A set $A$ is called definable if $A\in\Delta$. A map $f$ is said to be definable if its graph $\gph f \subseteq \mathbb{R}^{m+n}$ is definable \cite{Dr}. Due to their dramatic impacts, these structures are being studied extensively. One of the interests of such structures in optimization is due to the following nonsmooth extension of KL property \cite{ABRC,BDLS} (see Theorem 11 of \cite{BDLS}). 
\begin{theorem}[\cite{BDLS}]\label{o-minimal:KL}
	Any proper lower semicontinuous function $f:\mathbb{R}^n\Rightarrow\mathbb{R}\cup \{+\infty\}$ that is definable in an o-minimal structure $\Delta$ has the KL property at each point of $\dom \partial f$. Moreover, the function $\psi$ is definable in $\Delta$. 
\end{theorem}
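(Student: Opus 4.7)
The strategy is to reduce the KL inequality to a quantitative statement about the \emph{desingularization function}
\[
\varphi(r) := \inf\{\|\xi\| : x \in B(\bar x, \eta),\ f(x) = r,\ \xi \in \partial f(x)\}
\]
on a small right-interval $(0,a)$ of $0$, after normalizing $f(\bar x) = 0$. First I would dispatch the easy case: if $0 \notin \partial f(\bar x)$, then lower semicontinuity of $x \mapsto \dist(0,\partial f(x))$ yields a uniform positive lower bound on a neighborhood of $\bar x$, and any affine $\psi$ with sufficiently steep slope fulfills \eqref{eq:kl:def}. The substance lies in the critical case $0 \in \partial f(\bar x)$, on which the remainder of the plan focuses.

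In that critical case I would set $\psi(r) := \int_0^r dt/\varphi(t)$ on $(0,a)$. By construction $\psi'(r)\,\varphi(r) = 1$, so for $x$ near $\bar x$ with $0 < f(x) < a$ one immediately gets
\[
\psi'(f(x))\,\dist(0,\partial f(x)) \;\geq\; \psi'(f(x))\,\varphi(f(x)) \;=\; 1,
\]
which is the desired inequality. Two pillars make this construction sound. First, $\varphi$ must be definable in $\Delta$: writing the Fr\'echet subdifferential via definable first-order formulas (an infimum over the $\liminf$-quotient appearing in the definition of $\partial f$) and then realizing the limiting subdifferential as an outer limit preserves definability of $\gph \partial f$; projecting then shows $\varphi$ itself is definable by the o-minimal Tarski-Seidenberg principle (Theorem~\ref{Tarski-Seidenberg}). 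Second, the monotonicity theorem for one-variable definable functions partitions a right-neighborhood of $0$ into finitely many intervals on which $\varphi$ is $C^1$ and strictly monotone; after shrinking $a$, one selects the branch on which $\varphi$ is nondecreasing, so that $1/\varphi$ is nonincreasing and $\psi$ is concave with $\psi(0)=0$ and $\psi'>0$.

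The key obstacle is the integrability of $1/\varphi$ near $0$, since $\varphi(r)\downarrow 0$ is expected at a critical value. Here one invokes the structure theorem for one-variable definable germs, which describes $\varphi(r)$ near $0$ as comparable to a definable power $r^\alpha$ up to a slowly varying definable factor. A variational argument -- applying Ekeland's principle to $f$ on the sublevel set $\{f \leq r\}$ -- produces an almost-stationary point at height $r$ whose subgradient has norm of order at most $\sqrt{r}$, forcing $\alpha < 1$ and hence $\int_0^a dt/\varphi(t) < \infty$ after further shrinking of $a$. This Ekeland-type comparison is the technical heart of the proof and the step I would expect to be the main obstacle, because it is what ultimately converts a purely geometric definability assumption into the quantitative bound on $\varphi$.

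Finally, to ensure that $\psi$ itself is definable in $\Delta$ (not merely in some enlargement), one either works in a structure known a priori to be stable under this one-parameter integration, or replaces the integral by an explicit definable majorant of the form $Cr^{1-\theta}$ with $\theta \in [0,1)\cap \mathbb{Q}$, yielding the sharper KL-exponent form \eqref{eq:KL:exponent:theta:exam} in the semialgebraic subcase and a qualitative analogue in the general o-minimal setting. Assembling the neighborhood $\mathcal{O}$, the window $a$, and the concave $C^1$ function $\psi$ then produces the data demanded by the KL definition at every $\bar x \in \dom \partial f$.
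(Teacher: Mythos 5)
The paper does not prove this statement at all: it is imported verbatim as Theorem~11 of \cite{BDLS}, so the only meaningful comparison is with the argument in that reference. Your skeleton is the right one and matches the Kurdyka strategy that \cite{BDLS} extend to the nonsmooth setting: desingularize via $\varphi(r)=\inf\{\dist(0,\partial f(x)): x\in B(\bar x,\eta),\, f(x)=r\}$, set $\psi(r)=\int_0^r dt/\varphi(t)$, obtain definability of $\varphi$ from definability of $\gph \partial f$ plus projection, and concavity from the monotonicity theorem. The easy case $0\notin\partial f(\bar x)$ via $f$-attentive closedness of the limiting subdifferential is also fine.

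The genuine gap is exactly the step you flag as the technical heart: integrability of $1/\varphi$ near $0$. Ekeland's principle produces \emph{some} almost-critical point with a \emph{small} subgradient, i.e.\ an upper bound $\varphi(r)\le C\sqrt r$; an upper bound on $\varphi$ is the wrong direction for $\int_0^a dt/\varphi(t)<+\infty$. If $\varphi(r)\sim c\,r^{\alpha}$, then $\varphi(r)\le C\sqrt r$ forces $\alpha\ge 1/2$, not $\alpha<1$, and $\alpha\ge 1$ (non-integrable) remains entirely consistent with that estimate. Worse, Ekeland's principle holds for arbitrary proper lsc functions while the KL inequality fails for general $C^\infty$ functions, so no variational argument of this type can supply the exponent bound; it must come from definability. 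The actual mechanism (Kurdyka for smooth definable $f$, carried over in \cite{BDLS}) is definable choice: select a definable curve $r\mapsto m(r)$ (the ``talweg'') nearly realizing the infimum defining $\varphi(r)$; a bounded definable curve has finite length by the monotonicity theorem applied coordinatewise, and differentiating $f(m(r))=r$ gives $1\le \|\nabla f(m(r))\|\,\|m'(r)\|$, hence $\int_0^a dt/\varphi(t)$ is controlled by the length of $m$ and is finite. A second, smaller gap: in the nonsmooth case this chain-rule step is not available for the limiting subdifferential as such; the content of \cite{BDLS} beyond Kurdyka is a Whitney stratification of $\gph f$ together with the projection formula expressing the tangential component of any $\xi\in\partial f(x)$ as the gradient of $f$ restricted to the stratum through $x$, which reduces the estimate to finitely many smooth definable pieces. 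Your remark that ``outer limits preserve definability'' gives definability of $\gph\partial f$ but not this quantitative link between subgradients and level-set geometry.
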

Let $R_{\exp} = (R,+,\cdot,\exp)$. Wilkie proved that $R_{\exp}$ is
model complete \cite{AW}. As a direct consequence of this theorem, each definable sets in $R_{\exp}$ is the image of the zero set of a function in $P(x,y,e^x,e^y)=0$ under a natural projection (see page 3 of \cite{Dr} or \cite{Loi}). 
Then by a Khovanskii result on fewnomials \cite{KA}, $R_{\exp}$ is an o-minimal structure. An analytic proof of Wilkie’s theorem is given in \cite{MP}. 
\begin{theorem}[\cite{Dr,Loi}]\label{R_exp:o-minimal}
	The images in $\mathbb{R}^n$ for $n=0,1,2, \ldots$ under projection maps $\mathbb{R}^{n+k}\Rightarrow\mathbb{R}^n$ of sets with the form $\{(x,y)\in\mathbb{R}^{n+k}:P(x,y,e^x,e^y)=0\}$ is definable, where $P$ is a real polynomial in $ 2(n+k)$ variables with $x:=(x^1, \ldots,x^n)$, $y:=(y^1,\ldots,y^k)$, $e^x:=(e^{x^1}, \ldots,e^{x^n})$ and $e^y:=(e^{y^1},\ldots,e^{y^k})$.
\end{theorem}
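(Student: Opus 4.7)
The statement is a structural description of the definable sets in the o-minimal structure $R_{\exp} = (R, +, \cdot, \exp)$, and the plan is to derive it by combining two classical ingredients: Wilkie's model completeness theorem for $R_{\exp}$ (already mentioned immediately before the theorem in the excerpt) and Khovanskii's theorem on fewnomials, which together yield both the existential normal form and the o-minimality guaranteeing that projections remain inside the structure.

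First, I would invoke Wilkie's theorem to assert that every first-order formula $\varphi(x_1,\ldots,x_n)$ in the language $(+,\cdot,\exp,0,1)$ is equivalent over $\mathbb{R}$ to a purely existential formula of the form $\exists z_1,\ldots,z_k : P(x,z,e^{x},e^{z})=0$, where $P$ is a real polynomial in $2(n+k)$ variables. This is the deep analytic step, and I would cite it rather than reprove it; its proof relies on controlling asymptotic behavior of solutions of exponential-polynomial systems, either through Hardy-field methods (Macintyre--Wilkie) or through Khovanskii-type counting of real zeros in parametric families, and an analytic version is available in \cite{MP}. The converse direction, that every such projection is definable, is immediate: the existential formula displayed above explicitly defines the set.

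Second, to obtain that this collection of projected zero sets is an o-minimal structure, so that calling its elements \emph{definable} carries the intended cell-decomposition and finiteness content used in Theorem \ref{o-minimal:KL}, I would appeal to Khovanskii's fewnomial theorem, which gives a uniform upper bound on the number of connected components of the real zero set of a Pfaffian system (in particular, of a polynomial system in the variables $x$ and $e^{x}$) depending only on the combinatorial complexity of the system. This uniform bound, fed back into Wilkie's normal form, produces the finiteness of fibers and monotonicity properties that characterize o-minimality. The closure of the class of projections under the Boolean operations and projections themselves is then a routine verification at the level of existential formulas, since conjunction corresponds to combining two polynomial equations into a sum of squares and existential quantification just appends new auxiliary variables $z_i$.

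The hardest step, by a wide margin, is Wilkie's model completeness theorem itself; everything else in the proof is either syntactic manipulation of existential formulas or an application of Khovanskii's quantitative bound. For the purposes of this paper the theorem is used off the shelf: its value lies in providing the explicit parametric form $P(x,y,e^{x},e^{y})=0$, which we can later verify directly for the logarithm, the square root, and the quadratic-by-reciprocal terms appearing in the Geman--McClure, Hebert--Leahy, and Ambrosio--Tortorelli Lagrangians, thereby placing all of our $L(\bds{u},\bds{y})$ inside the o-minimal structure $R_{\exp}$ and unlocking Theorem \ref{o-minimal:KL}.
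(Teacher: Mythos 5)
Your proposal is correct and follows essentially the same route the paper indicates: it cites Wilkie's model completeness theorem for $R_{\exp}$ to obtain the existential normal form $P(x,y,e^x,e^y)=0$ and Khovanskii's fewnomial bound to conclude o-minimality, which is exactly the chain of references (\cite{AW}, \cite{KA}, \cite{Dr}, \cite{Loi}) the paper gives in the paragraph preceding the theorem. The paper treats the result as an off-the-shelf citation rather than proving it, and your summary of where the difficulty lies (Wilkie's theorem) and of how the result is subsequently used to verify definability of $L_{HL}$ is accurate.
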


\section{Preconditioned framework and KL properties}\label{sec:pre:KL}
In this section, we will investigate the following preconditioned  alternating minimization framework  \eqref{eq:general:al:min} and give an illustration of our motivation  through the 
Lemma \ref{lem:feaible:percon} to be discussed. The proposed framework includes the updates of $u^{k+1}$ in \eqref{eq:b;truncated:qua}, \eqref{eq:L;truncated:qua}, \eqref{eq:gm:b}, \eqref{eq:hl:b}, \eqref{eq:ms:b} for the isotropic cases and \eqref{eq:b;truncated:qua:ani}, \eqref{eq:L;truncated:qua:ani}, \eqref{eq:gm:b:ani}, \eqref{eq:hl:b:ani} for the anisotropic cases along with the update  $s^{k+1}$ in \eqref{eq:ms:b}. 
Our preconditioned framework for \eqref{eq:alter:model} is as follows:
\begin{subequations}\label{eq:general:al:min}
	\begin{align}
	u^{k+1} &\in\argmin{L(u, y^k)+\frac{1}{2}\|u-u^k\|_{M_k}^2},\label{update:x}\\
	y^{k+1} &\in\argmin{L(u^{k+1}, y)+\frac{1}{2}\|y-y^k\|_{N_k}^2},\label{update:y}
	\end{align}
\end{subequations}
where $M_k, N_k$ are the proximal matrices satisfying \cite{ABRC, ABS}
\begin{equation}\label{eq:M:N:condition}
0<    \gamma_{-} I \leq M_k \leq  \gamma_{+} I, \quad 0<    \mu_{-} I \leq N_k \leq  \mu_{+} I, \quad  0<\gamma_{-}, \ \mu_{-}<\gamma_{+}, \ \mu_{+}  < +\infty.
\end{equation}
Here $y^k$ denotes $b^k$, $l^k$ or $s^k$ in the corresponding models.
We will employ the metric induced by $M_k$ depending on the corresponding model, which turns out to be the classical and powerful preconditioned iterations. It can bring out flexibility and efficiency for linear subproblems.

Henceforth we will focus on the denoising problems with $A=I$. We can reformulate the original Euler-Lagrangian  equation  for $u^{k+1}$ (or $s^{k+1}$ for \eqref{eq:ms:b}) as the following general form 
\begin{equation}\label{eq:original:linear:sys}
\mathbb{T}_k u = \mathfrak{b}^k  \ \text{in} \  \Omega,  \quad \frac{\partial u^{k+1}}{\partial \nu}|_{\partial \Omega} = 0,  \quad \mathbb{T}_k:=  \gamma^k(x)I+ \nabla^* \mathcal{B}^k \nabla, 
\end{equation}
where $\gamma_k(x) \geq \bar \gamma $ for arbitrary $x\in \Omega$ with the constant $\bar \gamma >0$ and the linear operator $\mathcal{B}^k$  is positive semidefinite.
We have $\mathbb{T}_k = I - \mu \Delta$, $\mathfrak{b}^k = u_0 + \nabla^*l^k$ with $\gamma^k(x) = 1.0$ and $\mathcal{B}^k = \text{Diag}[\mu I,\mu I]$ for \eqref{eq:L;truncated:qua},   $\mathbb{T}_k = I + \mu \nabla^* {b}^k \nabla$, $\mathfrak{b}^k = u_0$ with $\gamma^k(x) = 1.0$ and $\mathcal{B}^k = \text{Diag}[\mu {b}^k,\mu {b}^k]$ for \eqref{eq:b;truncated:qua} and $\mathfrak{b}^k = u_0$ with $\gamma^k(x)=1.0$
\begin{equation}\label{eq:bk:general}
\mathbb{T}_k = I +  \nabla^*(  \mathcal{B}^k \nabla), \quad \mathcal{B}^k : = \text{Diag}[\mu/\lambda b_1^k ,  \mu/\lambda b_2^k], 
\end{equation}
for \eqref{eq:gm:b:ani}
and \eqref{eq:hl:b:ani}. Similarly, for $s^{k+1}$ update in \eqref{eq:ms:b},  we have $\mathbb{T}_k =  (\frac{\lambda}{2\varepsilon} + 2 \alpha|\nabla u^k|^2)I -   \lambda\varepsilon \Delta$ with $\gamma^k(x) = \frac{\lambda}{2\varepsilon} + 2 \alpha |\nabla u^k|^2$ and $ \mathcal{B}^k = \text{Diag}[\alpha \varepsilon,\alpha \varepsilon]$. All the other cases are similar. Inspired by the recent development of preconditioning technique for linear subproblems in the nonlinear convex \cite{BS1,BS2,BS3} or nonconvex iteration \cite{DS}, we will introduce the classical preconditioned iteration to deal with linear subproblems \eqref{eq:original:linear:sys}. Our motivation mainly comes from the following lemma, i.e., Lemma \ref{lem:feaible:percon}.

\begin{lemma}\label{lem:feaible:percon}
	With appropriately chosen linear operators $M_k$ and $N_k$ satisfying \eqref{eq:M:N:condition}, for the update of $u^{k+1}$ in the isotropic cases \eqref{eq:b;truncated:qua}, \eqref{eq:L;truncated:qua}, \eqref{eq:gm:b}, \eqref{eq:hl:b}, \eqref{eq:ms:b} and the anisotropic cases \eqref{eq:b;truncated:qua:ani}, \eqref{eq:L;truncated:qua:ani}, \eqref{eq:gm:b:ani}, \eqref{eq:hl:b:ani} along with $s^{k+1}$ in \eqref{eq:ms:b}, these updates can be written as the following classical preconditioned iteration for the original equation \eqref{eq:original:linear:sys},
	\begin{equation}\label{eq:pre:iter}
	u^{k+1}: = u^k + \mathbb{M}_k^{-1}[\mathfrak{b}^k -\mathbb{T}_k u^k], \quad \mathbb{M}_k: = \mathbb{T}_k + M_k.
	\end{equation}
\end{lemma}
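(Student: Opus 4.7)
The plan is to derive the preconditioned iteration \eqref{eq:pre:iter} from the first-order optimality condition of the proximal subproblem \eqref{update:x}, and then, conversely, to read the preconditioning matrix $M_k$ off any classical splitting of $\mathbb{T}_k$. In every one of the listed models, $L(u,y^k)$ is a quadratic function of $u$ with Hessian exactly equal to the symmetric positive-definite operator $\mathbb{T}_k$ defined in \eqref{eq:original:linear:sys}; in fact $\nabla_u L(u,y^k) = \mathbb{T}_k u - \mathfrak{b}^k$ by direct differentiation of the data term and the quadratic-in-$u$ part of the regularization (the line or outlier process is frozen to $y^k$). The same applies to the $s$-update of \eqref{eq:ms:b}, where $L(u^{k+1},\cdot)$ is quadratic in $s$ with Hessian $\mathbb{T}_k = (\tfrac{\lambda}{2\varepsilon}+2\alpha|\nabla u^k|^2) I - \lambda\varepsilon \Delta$.

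Using this, I would first write the optimality system of \eqref{update:x}: since the proximal term contributes $M_k(u^{k+1}-u^k)$ to the gradient, the first-order condition is
\[
\mathbb{T}_k u^{k+1} - \mathfrak{b}^k + M_k(u^{k+1}-u^k) = 0.
\]
Rearranging gives $(\mathbb{T}_k+M_k)(u^{k+1}-u^k) = \mathfrak{b}^k - \mathbb{T}_k u^k$, which is exactly \eqref{eq:pre:iter} once we set $\mathbb{M}_k := \mathbb{T}_k + M_k$. This already proves the forward direction: any proximal update of the form \eqref{update:x} is a preconditioned one-step iteration for \eqref{eq:original:linear:sys}.

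For the claim ``appropriately chosen $M_k$'', I would argue in the reverse direction: given a classical preconditioner $\mathbb{M}_k$ for $\mathbb{T}_k$ (for instance the SGS/SSOR preconditioners built from the red-black splitting announced in the introduction and detailed in Section \ref{sec:numer}), one defines $M_k := \mathbb{M}_k - \mathbb{T}_k$ and checks that $M_k$ is symmetric positive definite with uniform bounds, i.e.\ satisfies \eqref{eq:M:N:condition}. For SGS/SSOR on an SPD operator this is a standard fact: writing $\mathbb{T}_k = D_k - L_k - L_k^\ast$ with $D_k$ the positive diagonal, the SSOR preconditioner is $\mathbb{M}_k = (D_k - L_k)D_k^{-1}(D_k - L_k^\ast)$, and $\mathbb{M}_k - \mathbb{T}_k = L_k D_k^{-1} L_k^\ast$ is symmetric positive semidefinite (a harmless shift by $\delta I$ makes it positive definite). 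Finally I would check that $D_k$, $L_k$, and hence $M_k$, are uniformly bounded above and below: this uses the explicit form of $\mathbb{T}_k$ in each model, together with the uniform lower bound $\gamma^k(x) \geq \bar\gamma>0$ stated below \eqref{eq:original:linear:sys} and the (still to be established) boundedness of the sequences $b^k$, $l^k$, $s^k$, $\nabla u^k$ which enter $\mathcal{B}^k$ and $\gamma^k$.

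The main obstacle, and what I would handle model-by-model, is precisely this last uniform-bound step: for the truncated quadratic models one has $b^k\in[0,1]$ or $l^k$ bounded by $\sqrt{\lambda/\mu}$, which gives an a priori control on $\mathcal{B}^k$; for Geman--McClure and Hebert--Leahy one must argue that $b^k$ stays in a compact subset of $(0,\infty)$ along the iteration (deferred to Section \ref{sec:convergence} and used only a posteriori), in which case a small shift $\delta I$ with $\delta$ independent of $k$ can be absorbed into $M_k$ to guarantee $M_k \succeq \gamma_- I$ uniformly. The Ambrosio--Tortorelli case is the most involved because $\gamma^k$ depends on $|\nabla u^k|^2$ and so does $\mathbb{T}_k$ for the $s$-update, but the same recipe applies once $\|\nabla u^k\|$ is bounded along iterates. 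Putting everything together yields \eqref{eq:pre:iter} with an $M_k$ that fits the framework \eqref{eq:M:N:condition}, which is exactly what the lemma asserts.
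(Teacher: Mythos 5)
Your derivation is correct and is essentially the paper's own proof: the paper likewise writes the first-order condition $\mathbb{T}_k u + M_k(u-u^k) - \mathfrak{b}^k = 0$ for \eqref{update:x} and rearranges it into \eqref{eq:pre:iter} with $\mathbb{M}_k = \mathbb{T}_k + M_k$. The additional material you include — reading $M_k = \mathbb{M}_k - \mathbb{T}_k = E_k^* D_k^{-1} E_k$ off the symmetric Gauss--Seidel splitting, shifting by a small multiple of the identity to get strict positive definiteness, and worrying about uniform bounds via the boundedness of $b^k$, $l^k$, $s^k$ — is not part of the paper's proof of this lemma but matches what the paper defers to Remark \ref{rem:pre:gs} and Section \ref{sec:convergence}.
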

\begin{proof}
	Suppose the original Euler-Lagrangian equation for the corresponding functional is \eqref{eq:original:linear:sys}. With adding proximal term in \eqref{update:x}, the Euler-Lagrangian equation for $u$ becomes
	\begin{equation}
	\mathbb{T}_k u +M_k(u-u^k)  - \mathfrak{b}^k=0. 
	\end{equation}
	Still with notation $u^{k+1}$ as the solved $u$ above, we thus arrive at
	\begin{subequations}\label{eq:pre:iterations}
		\begin{align}
		u^{k+1} &= (M_k+ \mathbb{T}_k)^{-1}(\mathfrak{b}^k + M_ku^k)   \\  
		& = (M_k+ \mathbb{T}_k)^{-1}[(M_k+ \mathbb{T}_k) u^k + \mathfrak{b}^k - \mathbb{T}_k u^k]   \\
		&=u^k + \mathbb{M}_k^{-1}[\mathfrak{b}^k  - \mathbb{T}_ku^k],      
		\end{align}
	\end{subequations}	
	which is essentially the classical preconditioned iteration for solving the linear equation $\mathbb{T}_k u=\mathfrak{b}^k$ \cite{SA}.
	We thus reformulate the proximal iteration in \eqref{eq:general:al:min} as the preconditioned iteration \eqref{eq:pre:iter}, which will turn out very useful.
\end{proof}	
Through Lemma \ref{lem:feaible:percon}, we introduce the classical preconditioned iterations from numerical linear algebra and computation to the nonlinear alternating minimization by specially designed positive definite proximal terms $M_k$ and $N_k$. The idea  can also be found in \cite{BS3,DS}. Here, for self-completeness, we give some explanation through a concrete example. 
\begin{remark}\label{rem:pre:gs}
	Suppose the discretization of the linear operator $\mathbb{T}_k$  in Lemma \ref{lem:feaible:percon} is $D_k-E_k-E_k^*$  where $D_k$ is the diagonal part, $-E_k$ represents the strict lower triangular part and $E_k^*$ is the transpose of $E_k$.  Here we still use $\mathbb{T}_k$ as its corresponding discrete matrix.  If  choosing $ \mathbb{M}_k$ as the symmetric Gauss-Seidel preconditioner involving $ \mathbb{T}_k$, considering the positive definiteness requirement of $M_k$ as in \eqref{eq:M:N:condition}, we can choose  \cite{SA} (chapter 4.1) (or \cite{BS1})
	\begin{equation}\label{eq:pre:perturb}
	\mathbb{M}_k = \mathbb{T}_k + E_k^*D_k^{-1}E_k + \eta I = D_k-E_k-E_k^*+ E_k^*D_k^{-1}E_k + \eta I,
	\end{equation}
	where $\eta >0$ being a tiny positive constant and $I$ being the identity matrix. The small perturbation with $\eta I$ is to guarantee the positive definiteness of $M_k$. Actually, we have the proximal metric 
	\begin{equation}\label{eq:preconditioner}
	M_k = \mathbb{M}_k- \mathbb{T}_k = E_k^*D_k^{-1}E_k + \eta I  \geq \eta I,
	\end{equation}
	which is positive definite. However, we do not need to calculate the explicit form of  $\mathbb{M}_k$ or $M_k$. Instead, we can do it through rewriting \eqref{eq:pre:iterations} as follows 
	\begin{subequations}\label{eq:pre:iterations:tiny}
		\begin{align}
		u^{k+1} & = (M_k+ \mathbb{T}_k)^{-1}[(M_k+ \mathbb{T}_k) u^k + \mathfrak{b}^k + \eta u^k - (\mathbb{T}_k + \eta I) u^k]   \\
		&=u^k + \mathbb{M}_k^{-1}[  \bar{\mathfrak{b}}^k  - \bar {\mathbb{T}}_k u^k],      
		\end{align}
	\end{subequations}
	where $\bar{\mathfrak{b}}^k: =\mathfrak{b}^k+  \eta u^k $ and $\bar {\mathbb{T}}_k = {\mathbb{T}}_k + \eta I$. 
	This means the update \eqref{eq:pre:iterations:tiny}  is exactly the one time symmetric Gauss-Seidel iteration for the linear equation $\bar {\mathbb{T}}_k u = \bar{\mathfrak{b}}^k$, which is also equivalent to one time symmetric Gauss-Seidel iteration for the linear equation $ {\mathbb{T}}_k u = {\mathfrak{b}}^k$ with preconditioner in \eqref{eq:pre:perturb}.

\end{remark}
Furthermore, it is proved that any  finite preconditioned iterations still provide a preconditioner \cite{BS1} satisfying \eqref{eq:M:N:condition}, i.e., 
\begin{equation}\label{eq:preconditioner:multiple}
u^{k+(i+1)/n} = u^{k+i/n} + \mathbb{M}_k^{-1}( \bar{\mathfrak{b}}^k-  \bar{\mathbb{T}}_k  u^{k+i/n}),\quad 
i = 0,\ldots,n-1
\end{equation}
corresponds to $u^{k+1} = u^k + \mathbb{M}_{k,n}^{-1}(\bar{\mathfrak{b}}^k-  \bar{\mathbb{T}}_k  u^{k})$ and we just need to choose large enough $\gamma_{+}$ according to fixed $n$ for meeting the requirement of $\mathbb{M}_{k,n}$ in \eqref{eq:M:N:condition}. 
We thus built a flexible framework for introducing the classical preconditioning techniques. However, how to design efficient preconditioners for the corresponding  $\mathbb{T}_k$ especially the anisotropic cases  is still very subtle and challenging. We leave them to section \ref{sec:numer}.

Now, let's turn to the discussion of the KL-properties of \eqref{eq:b;truncated:qua}, \eqref{eq:L;truncated:qua}, \eqref{eq:gm:b}, \eqref{eq:hl:b}, and \eqref{eq:ms:b}. The anisotropic cases \eqref{eq:b;truncated:qua:ani}, \eqref{eq:L;truncated:qua:ani}, \eqref{eq:gm:b:ani}, \eqref{eq:hl:b:ani} are completely similar and we omit them here. Let's begin with the KL properties of $L_{GY}(u, l)$.

\begin{lemma}
	The isotropic $L_{GY}(u, l)$ is KL-function. 
\end{lemma}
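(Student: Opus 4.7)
The plan is to prove that $L_{GY}(u,l)$ is semialgebraic, since semialgebraic functions automatically have the KL property (as recalled right after Definition \ref{definition:semialgebraic} via the corollary of Theorem 3.2 of \cite{BDL}). The data term $D(u)$ and the quadratic coupling $\frac{\mu}{2}\|\nabla u - l\|_2^2$ are polynomials in the finite-dimensional variables $u$ and $l$, hence semialgebraic. What really needs attention is the penalty $\mu \int_\Omega H(l;\lambda/\mu)\,\mathrm{d}\sigma$, which after the discretization/vectorization set up in the paper reduces to a finite sum of per-pixel terms $\mu H(l_i;\lambda/\mu)$.

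Fix one pixel and view $l \in \mathbb{R}^2$. The function $H$ is piecewise-defined, the two pieces being separated by the polynomial inequality $|l|^2 \leq \lambda/\mu$. On the outer piece the value is the constant $\lambda/(2\mu)$, whose graph is trivially a semialgebraic subset of $\mathbb{R}^3$. On the inner piece, the quantity $\sqrt{\lambda/\mu}\,|l| - |l|^2/2$ is nonnegative, so its graph can be written without any square root as
\[
\left\{(l,s) :\ |l|^2 \leq \tfrac{\lambda}{\mu},\ \ s + \tfrac{|l|^2}{2} \geq 0,\ \ \big(s + \tfrac{|l|^2}{2}\big)^2 = \tfrac{\lambda}{\mu}\,|l|^2\right\},
\]
which is cut out by polynomial (in)equalities. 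Taking the union of the two pieces yields a semialgebraic graph, so $l \mapsto H(l;\lambda/\mu)$ is a semialgebraic function.

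Summing these per-pixel contributions and adding the polynomial data and coupling terms stays within the class of semialgebraic functions (sums of finitely many semialgebraic functions are semialgebraic, by applying Theorem \ref{Tarski-Seidenberg} to an auxiliary graph if one wants a formal justification). Hence $L_{GY}(u,l)$ is semialgebraic, and the quoted corollary produces a desingularizing function of the form $\psi(s)=c s^{1-\theta}$ with some rational $\theta \in [0,1)$, yielding the KL property at every point of $\dom \partial L_{GY}$. The only delicate moment is eliminating the square root in the inner piece of $H$, which is handled by the squaring-plus-sign-constraint trick above; no appeal to the o-minimal machinery of Theorems \ref{Tarski-Seidenberg}--\ref{R_exp:o-minimal} is required here (that will become necessary only for the models involving $\log$, such as the Hebert-Leahy model).
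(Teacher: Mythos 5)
Your proposal is correct and follows essentially the same route as the paper: both arguments establish that $L_{GY}$ is semialgebraic by writing its graph as a finite union of sets cut out by polynomial equalities and inequalities, with the only issue being the square root hidden in $|l|$ inside $H$. The paper eliminates it by introducing auxiliary variables $s_i$ with $s_i^2=l_{1i}^2+l_{2i}^2$ and projecting them out via the Tarski--Seidenberg theorem, whereas you square the defining relation directly with a sign constraint; this is a cosmetic difference and both are valid.
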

\begin{proof}
	We see $\gph L_{GY}$ can be written as follows with $l=(l_1, l_2) \in \mathbb{R}^{2MN}$. Denoting $x=(u,l_1,l_2,z)$ and $|D(u)_i| := {|(Au-u_0)_i|^2}/{2}$, we have 
	\begin{align}
	&\gph L_{GY}=\big\{x  :z=\sum_{i=1}^{M_0N_0}|D(u)_i| +\frac{\lambda}{2}\sum_{i=1}^{MN}\frac{|\grad u-l|_i^2}{2}+\sum_{i=1}^{MN}H(l_i;\frac{\lambda}{\mu})\big\}=\notag\\
	&\big\{x:z-(\sum_{i=1}^{M_0N_0}|D(u)_i| +\frac{\lambda}{2}\sum_{i=1}^{MN}\frac{|\grad u-l|_i^2}{2}+\sum_{i=1}^{M_0N_0}\sqrt{\frac{\lambda}{\mu}}|l_i|-\frac{|l_i|^2}{2})=0, l_{1i}^2+l_{2i}^2> \frac{\lambda}{\mu})\big\}\notag\\
	&\bigcup\big\{x:z-(\sum_{i=1}^{M_0N_0}|D(u)_i| +\frac{\lambda}{2}\sum_{i=1}^{MN}\frac{|\grad u-l|_i^2}{2}+\frac{\lambda}{\mu}), 0 \leq l_{1i}^2+l_{2i}^2 \leq  \frac{\lambda}{\mu})\big\}. \notag
	\end{align}
	Denote $s:=\sqrt{l_1^2+l_2^2}$, $U_i:= \{ (x,s):l_{1i}^2+l_{2i}^2 = s_i^2, s_i > \sqrt{\frac{\lambda}{\mu}}\}$ and $V_i:= \{ (x,s):l_{1i}^2+l_{2i}^2 = s_i^2, 0\leq s_i \leq \sqrt{ \frac{\lambda}{\mu}}\}$ for $i=1,2, \cdots, M_0N_0$. We found the above representation of $\gph L_{GY}$ can be formulated as
	\begin{align}
	&\big(\big\{(x,s):z-(\sum_{i=1}^{M_0N_0}|D(u)_i|+\frac{\lambda}{2}\sum_{i=1}^{MN}\frac{|\grad u-l|_i^2}{2}+\sum_{i=1}^{M_0N_0}\sqrt{\frac{\lambda}{\mu}}s_i-\frac{s_i^2}{2})=0\big\} \bigcap_{i=1}^{M_0N_0}U_i \big) \notag\\
	&\bigcup \big(\big\{(x,s):z-(\sum_{i=1}^{M_0N_0}|D(u)_i|+\frac{\lambda}{2}\sum_{i=1}^{MN}\frac{|\grad u-l|_i^2}{2}+\frac{\lambda}{\mu})=0\big\}\bigcap_{i=1}^{M_0N_0}V_i\big) \notag 
	\end{align}
	Since all the sets above are semialgebraic sets, with Tarski-Seidenberg Theorem, i.e., Theorem \ref{Tarski-Seidenberg}, $\gph L_{GY}$ is semialgebraic sets.  $ L_{GY}$ is semialgebraic function and is a KL-function. 
\end{proof}

For the cases of the isotropic or anisotropic $L_{GR}(u,b)$, $L_{GM}(u,b)$, and $L_{MS}(u,s)$,  the proofs are quite similar to Lemma \ref{GR_GM_MS:KL} and we omit here.
\begin{lemma}\label{GR_GM_MS:KL}
	The functions $L_{GR}(u,b)$, $L_{GM}(u,b)$, and $L_{MS}(u,s)$ are semialgeraic functions and thus are KL-functions.
\end{lemma}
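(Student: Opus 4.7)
The plan is to verify that each of the three graphs $\gph L_{GR}$, $\gph L_{GM}$, and $\gph L_{MS}$ can be realized as a semialgebraic subset of the ambient space, mirroring the strategy used for $L_{GY}$. Once semialgebraicity is in hand, the corollary of Theorem 3.2 of \cite{BDL} mentioned after Definition \ref{definition:semialgebraic} immediately gives the KL property with desingularizing function $\psi(s)=cs^{1-\theta}$ for some $\theta \in [0,1)\cap\mathbb{Q}$.

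For $L_{MS}(u,s)$ in \eqref{eq:ms:b}, the payoff is essentially immediate: after vectorization, every summand is a polynomial in the joint variable $(u,s)$, since $\grad u$ and $\grad s$ are linear expressions and only appear squared. Hence $\gph L_{MS}$ is the zero set of a single real polynomial equation, which is a semialgebraic set by definition. For $L_{GR}(u,b)$ in \eqref{eq:b;truncated:qua}, the integrand is polynomial in $(u,b)$ apart from the indicator $I_{\{0\leq b_i\leq 1\}}$, which only restricts the effective domain. Concretely, I would write
\[
\gph L_{GR}=\Bigl\{(u,b,z): 0\leq b_i\leq 1,\ i=1,\dots,MN,\ z=\sum_i|D(u)_i|+\tfrac{\lambda}{2}\sum_i\bigl[b_i(\tfrac{\mu}{\lambda}|\grad u|_i^2-1)+1\bigr]\Bigr\},
\]
which is cut out by finitely many polynomial equalities and inequalities and is therefore semialgebraic without any auxiliary construction.

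The only genuine obstacle is the $\sqrt{b_i}$ term in $L_{GM}(u,b)$ from \eqref{eq:gm:b}, which is not polynomial. I would imitate the auxiliary-variable trick used in the $L_{GY}$ proof: introduce new coordinates $t_i\geq 0$ with $t_i^2=b_i$ and consider the enlarged set
\[
\Bigl\{(u,b,z,t): t_i\geq 0,\ t_i^2=b_i,\ z=\sum_i|D(u)_i|+\tfrac{\mu}{2}\sum_i\bigl[b_i|\grad u|_i^2/\lambda+b_i-2t_i+1\bigr]\Bigr\},
\]
which is manifestly semialgebraic since $\sqrt{\,\cdot\,}$ has been replaced by a polynomial constraint together with a sign constraint. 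Projecting out the $t$-coordinates recovers $\gph L_{GM}$, and Theorem \ref{Tarski-Seidenberg} guarantees that this projection remains semialgebraic. The anisotropic variants are identical, component-by-component, so the same argument applies to \eqref{eq:b;truncated:qua:ani}, \eqref{eq:gm:b:ani}, and \eqref{eq:hl:b:ani} (the last using $\log$, which requires instead the o-minimal machinery of Theorem \ref{R_exp:o-minimal} rather than Tarski-Seidenberg; but $L_{HL}$ is not in the current lemma). The main subtlety is therefore purely bookkeeping: one must remember to move non-polynomial ingredients out of the function by enlarging the space and then invoke Tarski-Seidenberg, rather than attempting to substitute them away directly.
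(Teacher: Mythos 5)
Your proposal is correct and follows essentially the route the paper intends: the paper omits the proof of this lemma, deferring to the argument for $L_{GY}$, and your treatment (direct polynomial description of $\gph L_{MS}$, polynomial equalities plus the inequality constraints $0\le b_i\le 1$ for $\gph L_{GR}$, and the auxiliary variable $t_i\ge 0$ with $t_i^2=b_i$ followed by Theorem \ref{Tarski-Seidenberg} to eliminate the $\sqrt{b_i}$ term in $L_{GM}$) is exactly the same auxiliary-variable-plus-projection strategy used there. Your remark that $L_{HL}$ falls outside this lemma and requires the o-minimal machinery instead is also consistent with the paper's organization.
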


For the KL property  of $L_{HL}(u, b)$, we need to employ the o-minimal structure. Actually, we have the following lemma. 
\begin{lemma}
	The function $L_{HL}(u, b)$ is definable and  is  a KL-function. 
\end{lemma}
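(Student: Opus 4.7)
The plan is to follow the same template as Lemma \ref{GR_GM_MS:KL}, namely writing out the graph of $L_{HL}$ explicitly, but since the $-\log(b_i)$ term is not semialgebraic the Tarski--Seidenberg theorem no longer applies. Instead, I would place the graph in the o-minimal structure $R_{\exp}$ of Theorem \ref{R_exp:o-minimal} and then invoke Theorem \ref{o-minimal:KL} to conclude the KL property. The standard trick is to trade the logarithm for an exponential via an auxiliary variable, which brings the defining equations into the polynomial-in-$(x,y,e^x,e^y)$ form required by Theorem \ref{R_exp:o-minimal}.

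Concretely, I would write
\begin{equation*}
\gph L_{HL} = \bigl\{(u,b,z) : b_i>0,\ z = \sum_{i=1}^{M_0N_0}\tfrac{|(Au-u_0)_i|^2}{2} + \tfrac{\mu}{2\lambda}\sum_{i=1}^{MN} b_i|\nabla u|_i^2 + \tfrac{\mu}{2}\sum_{i=1}^{MN}(b_i-\log b_i-1) \bigr\},
\end{equation*}
and then introduce auxiliary variables $w=(w_1,\ldots,w_{MN})$ with the constraint $b_i=e^{w_i}$, so that $w_i$ plays the role of $\log b_i$. With this substitution $\gph L_{HL}$ becomes the image under the projection $(u,b,w,z)\mapsto(u,b,z)$ of the set
\begin{equation*}
S := \bigl\{(u,b,w,z) : b_i - e^{w_i} = 0 \text{ for all } i,\ \ z - Q(u,b,w) = 0 \bigr\},
\end{equation*}
where $Q(u,b,w)$ is the polynomial expression obtained from the energy after replacing each $\log b_i$ by $w_i$. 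Every defining equation of $S$ is polynomial in $(u,b,w,z,e^u,e^b,e^w,e^z)$ (in fact only $e^w$ appears), so $S$ has exactly the form covered by Theorem \ref{R_exp:o-minimal}, and hence so does its projection $\gph L_{HL}$.

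It follows that $L_{HL}$ is definable in $R_{\exp}$, and therefore by Theorem \ref{o-minimal:KL} it enjoys the KL property at every point of $\dom \partial L_{HL}$, i.e.\ it is a KL function. The constraints $b_i>0$ are themselves semialgebraic, hence definable, and definability is preserved under finite intersections, so the positivity restriction poses no difficulty. The main obstacle, and the only real departure from Lemma \ref{GR_GM_MS:KL}, is recognising that one must leave the semialgebraic world for the larger o-minimal structure $R_{\exp}$; once the logarithm is traded for the auxiliary exponential constraint $b_i=e^{w_i}$, the rest of the argument is a direct application of Theorems \ref{R_exp:o-minimal} and \ref{o-minimal:KL}.
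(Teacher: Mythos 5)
Your proposal is correct and follows essentially the same route as the paper: introduce $w_i$ with $b_i=e^{w_i}$ to replace $\log b_i$, realize $\gph L_{HL}$ as a projection of a set defined by equations polynomial in $(u,b,z,w,e^u,e^b,e^z,e^w)$, and invoke Theorems \ref{R_exp:o-minimal} and \ref{o-minimal:KL}. The only cosmetic difference is that the paper combines the defining equations into a single equation $P=0$ by summing squares, which matches the literal form required by Theorem \ref{R_exp:o-minimal}; your system of equations reduces to this trivially.
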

\begin{proof}
	Denoting $b=e^{w}$, we have $\log(b)=w$. 
	Since 
	\[
	\gph  L_{HL}(u, b) = \{(u, b, z):z=\sum_{i=1}^{M_0N_0}\frac{|(Au-u_0)_i|^2}{2}+\sum_{i=1}^{MN}\frac{\mu}{2}(b_i\frac{|(\nabla u)_i|}{\lambda}+b_i- \log(b_i)-1)\},
	\]
	it  thus can be reformulated as 
	\begin{align}
	&\biggl\{(u, b, z, w):z=\sum_{i=1}^{M_0N_0}\frac{|(Au-u_0)_i|^2}{2}+\sum_{i=1}^{MN}\frac{\mu}{2}(e^{w_i}\frac{|(\nabla u)_i|}{\lambda}+e^{w_i}-w_i-1), b_i=e^{w_i}\biggl\} \notag \\
	&=\biggl\{(u, b, z, w):\biggl(  z-\sum_{i=1}^{M_0N_0}\frac{|(Au-u_0)_i|^2}{2}+\sum_{i=1}^{MN}\frac{\mu}{2}(e^{w_i}\frac{|(\nabla u)_i|}{ \lambda}+e^{w_i}-w_i-1) \biggl)^2 \notag \\
	&\qquad \qquad \qquad \qquad  +\sum_{i=1}^{MN}(b_i-e^{w_i})^2=0\biggl\}. \notag
	\end{align}
	The above set on $\{(u,b,z,w)\}$ is a zero set of $P(u, b, z, w, e^u, e^b, e^z, e^w)=0$ with real polynomial function $P$.	Here $e^u=(e^{u_1}, \cdots, e^{u_{MN}})$, $e^b=(e^{b_1}, \cdots, e^{b_{MN}})$, $e^z=(e^{z_1}, \cdots, e^{z_{MN}})$, and $e^w=b$. We thus conclude  the $\gph  L_{HL}(u, b)$ is definable due to Theorem \ref{R_exp:o-minimal}.  Thanks to Theorem \ref{o-minimal:KL}, $L_{HL}(u, b)$ is also  definable and is a KL function. 
\end{proof}

Furthermore, although the analysis of the KL exponent is very challenging, however, for  the KL exponent of the anisotropic $L_{GY}^A(u, l)$, we have the following lemma.
\begin{lemma}\label{lem:ani:GY:KL}
	The anisotropic $L_{GY}^A(u, l)$ is a KL-function with an exponent of $\frac{1}{2}$.
\end{lemma}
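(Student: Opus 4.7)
The plan is to reveal a continuous piecewise-polynomial structure of degree at most two for $L^A_{GY}$ over a finite polyhedral partition, and then invoke the classical fact that quadratic polynomials on polyhedra satisfy a \L ojasiewicz inequality with exponent $1/2$.

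I would first construct the polyhedral decomposition. For every scalar coordinate $(l_i)_j$, the real line splits into four closed intervals determined by $\pm(l_i)_j\ge 0$ and by $|(l_i)_j|\le\sqrt{\lambda/\mu}$ versus $|(l_i)_j|\ge\sqrt{\lambda/\mu}$; taking products over all coordinates cuts $\mathbb{R}^{MN}\times\mathbb{R}^{2MN}$ into $4^{2MN}$ closed polyhedra $\{P_\alpha\}$ with disjoint interiors. On each $P_\alpha$, $|(l_i)_j|$ collapses to $\pm(l_i)_j$ and the piecewise definition of $H((l_i)_j;\lambda/\mu)$ collapses either to the smooth quadratic $\pm\sqrt{\lambda/\mu}(l_i)_j-(l_i)_j^2/2$ or to the constant $\lambda/(2\mu)$. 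Combined with the quadratic terms $D(u)$ and $\tfrac{\mu}{2}\|\nabla_i u-l_i\|^2$, this shows that $L^A_{GY}|_{P_\alpha}$ equals a polynomial $q_\alpha(u,l)$ of total degree at most two, and by construction the values glue continuously across adjacent pieces.

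Next, fix an arbitrary $(\bar u,\bar l)\in\dom\partial L^A_{GY}$ and a neighborhood $\mathcal{O}$ small enough that it meets only the finitely many pieces $\{P_\alpha\}_{\alpha\in\mathcal{A}}$ containing $(\bar u,\bar l)$. On $\mathcal{O}\cap P_\alpha$ the function agrees with the quadratic polynomial $q_\alpha$, and the classical \L ojasiewicz inequality for polynomials of degree two furnishes a constant $c_\alpha>0$ with
\[
\dist\bigl(0,\partial(q_\alpha+I_{P_\alpha})(u,l)\bigr)\ \ge\ c_\alpha\,\bigl(q_\alpha(u,l)-L^A_{GY}(\bar u,\bar l)\bigr)_+^{1/2}\quad\text{on}\ \mathcal{O}\cap P_\alpha.
\]
The limiting-subdifferential calculus for continuous piecewise smooth functions places $\partial L^A_{GY}(u,l)$ into $\partial(q_\alpha+I_{P_\alpha})(u,l)$ at any point of $P_\alpha$, so the same inequality holds with $\partial L^A_{GY}$ on the left. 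Setting $\bar c:=\min_{\alpha\in\mathcal{A}}c_\alpha$ and combining the finitely many piecewise estimates over $\mathcal{O}$ produces the inequality \eqref{eq:KL:exponent:theta:exam} with $\theta=1/2$; arbitrariness of $(\bar u,\bar l)$ then yields the claim.

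The main technical obstacle is the gluing step: at points where several polyhedra meet, one must make sure that the limiting subdifferential of the continuous piecewise quadratic $L^A_{GY}$ is controlled by the gradients of the adjacent quadratic pieces in a way that preserves the exponent $1/2$. The piecewise-smoothness calculus, together with the elementary fact that a finite minimum of nonnegative quantities still satisfies a \L ojasiewicz-type bound with the same exponent, handles exactly this situation, so once the partition is in place the remaining work is essentially computational.
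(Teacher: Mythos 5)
Your overall strategy --- exposing the continuous piecewise ``quadratic plus polyhedral'' structure of $L_{GY}^A$ and extracting the exponent $\tfrac12$ from the quadratic pieces --- is the same idea the paper uses, but the paper packages it differently: it writes $L_{GY}^A$ globally as a pointwise minimum of finitely many functions $\mathfrak{F}_{i,j,k}+\delta_{i,j,k}$, each a (possibly nonconvex) quadratic plus a polyhedral function, and then invokes Corollary~5.2 of \cite{LP}, which asserts that such a minimum is a KL function with exponent $\tfrac12$. That citation does exactly the work you defer to the ``gluing step,'' and it is at that step that your argument has a genuine gap. The inclusion you assert, $\partial L_{GY}^A(u,l)\subseteq \partial(q_\alpha+I_{P_\alpha})(u,l)$ for a piece $P_\alpha$ containing the point, is not a consequence of any general calculus for continuous piecewise smooth functions; it already fails for $f(t)=-|t|$ at $t=0$, where $\partial f(0)=\{-1,+1\}$ but $\partial(q_\alpha+I_{[0,\infty)})(0)=(-\infty,-1]$ misses $+1$. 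At a point where several polyhedra meet, the limiting subdifferential of a continuous piecewise quadratic function contains limits of gradients from \emph{all} adjacent pieces, and $\nabla q_\beta(x)$ need not lie in $\nabla q_\alpha(x)+N_{P_\alpha}(x)$: continuity of the glued function only forces $\nabla q_\alpha(x)-\nabla q_\beta(x)$ to be normal to the common face, not to point into the correct side of the normal cone.

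What saves the argument for this particular function --- and what you would need to verify explicitly --- is that the only genuine kinks of $H(\cdot;\lambda/\mu)$ occur at $l_{i,j}=0$, where the function behaves locally like the \emph{convex} kink $\sqrt{\lambda/\mu}\,|l_{i,j}|$ (so every element of the limiting subdifferential does land in $\nabla q_\alpha+N_{P_\alpha}$ for a suitable active $\alpha$), while at $|l_{i,j}|=\sqrt{\lambda/\mu}$ the two quadratic branches meet in a $C^1$ fashion. A cleaner repair, which avoids case-checking the junctions altogether, is the paper's route: absorb the kink $\sqrt{\lambda/\mu}\,|l_{i,j}|$ and the region indicators into polyhedral functions $\delta_{i,j,k}$, write $L_{GY}^A=\min_k\{\mathfrak{F}_{i,j,k}+\delta_{i,j,k}\}$ with $\mathfrak{F}_{i,j,k}$ quadratic, and cite the min-of-quadratic-plus-polyhedral result of \cite{LP}. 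Note also that your appeal to ``the classical \L ojasiewicz inequality for polynomials of degree two'' on a polyhedron is really a Luo--Tseng-type error bound for (possibly nonconvex) quadratic programs; it is true, but it is not the unconstrained \L ojasiewicz inequality, and since the exponent $\tfrac12$ is precisely what is at stake it needs a precise reference rather than a nod to the classical theory.
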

\begin{proof}
	Suppose the vectorized $\bds{l}_1$ and $\bds{l}_2$ are $(l_{1,1},\cdots, l_{1,MN})$ and  $(l_{2,1},\cdots, l_{2,MN})$.
	Let's introduce 
	\[
	r_{i,j}(l_{i,j}) =
	\begin{cases}
	\sqrt{\frac{\lambda}{\mu}}|l_{i,j}|-\frac{|l_{i,j}|^2}{2}, &|l_{i,j}|\le \sqrt{\frac{\lambda}{\mu}}, \\
	\frac{\lambda}{2\mu}, &|l_{i,j}|>\sqrt{\frac{\lambda}{\mu}},
	\end{cases}
	\]
	where $i = 1, 2$ and $j=1, \cdots, MN$. 
	We thus rewrite $	L_{GY}^A(u, l)$ as follows
	\begin{align}
	&\sum_{i=1}^{M_0N_0}\frac{|(Au-u_0)_i|^2}{2}+ \sum_{i=1, 2}\sum_{j=1}^{MN}\left(\frac{\mu}{2}{|(\nabla_i u)_j-l_{i,j}|^2}+\mu r_{i,j}(l_{i,j})\right) \label{eq:indicator1} \\
	&\sum_{i=1}^{M_0N_0}\frac{|(Au-u_0)_i|^2}{2} + \sum_{i=1, 2}\sum_{j=1}^{MN}\left(\frac{\mu}{2}{|(\nabla_i u)_j -l_{i,j}|^2} + \mu\min_{k=1, 2}\{r_{i,j,k}(l_{i,j })+\zeta_{i,j,k}(l_{i,j}) \}\right) \notag
	\end{align}
	where $r_{i,j,1}(l_{i,j})=\sqrt{\frac{\lambda}{\mu}}|l_{i,j}|-\frac{|l_{i,j}|^2}{2}$, $r_{i,j,2}(l_{i,j})=\frac{\lambda}{2\mu}$,  $i=1,2, j=1, \cdots, MN$, and
	\begin{equation}\label{eq:constraint:set}
	\zeta_{i,j,1}(l_{i,j}):= I_{\{ l_{i,j}: |l_{i,j}|\leq \sqrt{\frac{\lambda}{\mu}} \}}(l_{i,j}), \quad \zeta_{i,j,2}(l_{i,j}):= I_{\{l_{i,j}: |l_{i,j}| > \sqrt{\frac{\lambda}{\mu}}\}} (l_{i,j}). 
	\end{equation}
	In order to put the piecewise polyhedral terms including $\zeta_{i,j,k}$ and $|l_{i,j}|$ together, introducing
	\[
	\delta_{i,j,1}(l_{i,j}):= \zeta_{i,j,1}(l_{i,j})+ \sqrt{{\lambda}{\mu}}|l_{i,j}|, \quad \delta_{i,j,2}(l_{i,j}):= \zeta_{i,j,2}(l_{i,j}), 
	\]
	with \eqref{eq:indicator1}, we can also reformulate $L_{GY}^A$ as follows
	\begin{equation}
	L_{GY}^A(u,l)=\min_{k=1, 2}\{\mathfrak{F}_{i,j,k}(u, l) + \delta_{i,j,k}(l_{i,j})\},\\	\label{eq:separtesum:1}
	\end{equation}
	where 
	\begin{align}
	&\mathfrak{F}_{i,j,1}(u, l):=\sum_{i=1}^{M_0N_0}\frac{|(Au-u_0)_i|^2}{2} + \frac{\mu}{2}\sum_{i=1, 2}\sum_{j=1}^{MN}\left(\frac{|(\nabla_i u)_j -l_{i,j}|^2}{2}-\frac{|l_{i,j}|^2}{2}\right),  \\
	&\mathfrak{F}_{i,j,2}(u, l):=\sum_{i=1}^{M_0N_0}\frac{|(Au-u_0)_i|^2}{2} +\frac{\mu}{2} \sum_{i=1, 2}\sum_{j=1}^{MN}\left(\frac{|(\nabla_i u)_j -l_{i,j}|^2}{2} + \frac{\lambda}{2}\right).
	\end{align}
	Denote $x:=(u,l)=(u,l_1, l_2)^T$. It can be seen that  $\delta_{i,j,1}(l_{i,j})$ and $\delta_{i,j,2}(l_{i,j})$ are polyhedral functions, since both the constraint sets in \eqref{eq:constraint:set}  are polyhedral sets and $|l_{i,j}|$ is also polyhedral function. Furthermore, it can be readily checked that $\mathfrak{F}_{i,j,k}(u, l)$ can be written as follows,
	\begin{equation}\label{eq:quadratic:symmetric}
	\mathfrak{F}_{i,j,k}(x)= 	\mathfrak{F}_{i,j,k}(u,l_1,l_2)=\frac{1}{2}x^TM_kx + x^Tc_k + b_k, \quad k=1,2, 
	\end{equation}
	where $M_1$ and $M_2$ are symmetric matrices,
	\[
	M_1 = \Sigma^T\Sigma, \quad M_2 = M_1 - \text{Diag}[0, \mu I , \mu I],\quad  \Sigma:= \begin{bmatrix}
	A &0 &0 \\
	\sqrt{\mu} \nabla_1 & -\sqrt{\mu}  I &0 \\
	\sqrt{\mu} \nabla_2 & 0 & -\sqrt{\mu} I
	\end{bmatrix},
	\]
	and
	\[
	c_k=[-A^Tu_0, 0,0]^T, \quad k=1,2, \quad b_1 = \|u_0\|_{2}^2/2,  \quad b_2 = \|u_0\|_{2}^2/2+ {\lambda}/{2}.
	\]
	Since $\mathfrak{F}_{i,j,k}$ can be written in \eqref{eq:quadratic:symmetric} and  $\delta_{i,j,k}$ with $k=1,2$ are polyhedral functions, by \cite{LP} (Corollary 5.2),  we conclude that 
	$L_{GY}^A$ is a KL function of $(u,l_1,l_2)$ with KL exponent $1/2$.
\end{proof}
\section{Global Convergence of the Proposed Algorithms} \label{sec:convergence}
In this section, we will discuss the convergence of  \eqref{eq:general:al:min} that can cover all the models discussed in this paper. We will first prove the boundedness  of the iteration sequence $(u^k,y^k)$ in \eqref{eq:general:al:min} for the corresponding models and the convergence proof then follows.
Actually, from the updates \eqref{update:x} and \eqref{update:y}, we can easily arrive at:
\begin{subequations}\label{update:L_down}
	\begin{align}
	& L(u^k, y^{k-1}) + \frac12 \|u^k-u^{k-1}\|_{M_{k-1}}^2 \leq L(u^{k-1}, y^{k-1}), \\
	& L(u^k, y^k) +\frac12\|y^k-y^{k-1}\|_{N_{k-1}}^2\le L(u^{k}, y^{k-1}), \\
	& L(u^k, y^k)+\frac12 \|u^k-u^{k-1}\|_{M_{k-1}}^2+\frac12\|y^k-y^{k-1}\|_{N_{k-1}}^2\le L(u^{k-1}, y^{k-1}), 
	\end{align}
\end{subequations}
which tells that $L(u^k, y^k)$ is bounded by $0$ and $L(u^0,y^0)$ and is decreasing.

\begin{lemma}\label{lem:bound:gy}
	With $N_k = \mu I$, $M_k = M $ satisfying \eqref{eq:M:N:condition} and $A^*A$ has a bounded inverse, the sequence $\{(u^k, l^k), k\in \mathbb{N}\}$ of the iterations \eqref{eq:general:al:min} for $L_{GY}$ is bounded. 
\end{lemma}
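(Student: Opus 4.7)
The plan is to exploit the monotone decrease of $L_{GY}$ established in \eqref{update:L_down}, combined with coercivity/nonnegativity of its individual terms, to extract boundedness of both components of the iterate.

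First, I would record the uniform bound $L_{GY}(u^k, l^k) \leq L_{GY}(u^0, l^0) =: C_0$ that follows from telescoping the third inequality in \eqref{update:L_down}. Then I would observe that each summand of $L_{GY}$ in \eqref{eq:L;truncated:qua} is nonnegative: the data term $D(u)=\|Au-u_0\|_2^2/2$ is trivially so, the coupling $\frac{\mu}{2}\|\nabla u - l\|_2^2$ is a squared norm, and a branch-by-branch check on $H(\,\cdot\,;\lambda/\mu)$ gives $H\geq 0$ (on $|l|\leq \sqrt{\lambda/\mu}$ we have $\sqrt{\lambda/\mu}\,|l| - |l|^2/2 = |l|(\sqrt{\lambda/\mu} - |l|/2)\geq 0$, and on the other branch $H = \lambda/(2\mu) > 0$). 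Consequently each of the three terms is individually bounded above by $C_0$ along the iterates.

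Next I would use the data term together with the assumption on $A$ to bound $\{u^k\}$. The inequality $\|Au^k - u_0\|_2 \leq \sqrt{2C_0}$ and the triangle inequality yield a uniform bound on $\|Au^k\|_2$, and since $A^*A$ has a bounded inverse, the estimate $\|u^k\|_2^2 \leq \|(A^*A)^{-1}\|\, \|Au^k\|_2^2$ turns this into a uniform bound on $\|u^k\|_2$. The bound on the coupling term then gives $\|\nabla u^k - l^k\|_2 \leq \sqrt{2C_0/\mu}$; since $\nabla$ is a bounded finite-dimensional linear operator and $\{u^k\}$ is already bounded, $\{\nabla u^k\}$ is bounded, and the reverse triangle inequality finishes the bound on $\{l^k\}$.

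I anticipate no serious obstacle: the proof reduces to bookkeeping which piece of $L_{GY}$ controls which variable. The only subtlety is verifying $H \geq 0$ so that the data and coupling terms can be isolated cleanly from the monotonically decreasing energy. The hypothesis that $A^*A$ has a bounded inverse is clearly essential, since without injectivity of $A$ the data term would control only $\|Au^k\|_2$, which in general does not bound $\|u^k\|_2$; the simple structure $N_k=\mu I$, $M_k=M$ plays no role in this particular lemma beyond ensuring that \eqref{update:L_down} is available.
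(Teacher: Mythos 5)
Your proof is correct and takes essentially the same route as the paper's: both use the monotone decrease of the energy from \eqref{update:L_down}, obtain boundedness of $\{u^k\}$ from the coercivity supplied by the data term together with the bounded inverse of $A^*A$ (the paper phrases this through $F(u^k)=\inf_{l}L_{GY}(u^k,l)\le L_{GY}(u^k,l^k)$ and the coercivity of $F$), and then bound $\{l^k\}$ via the coupling term, which the paper calls coercivity of $L_{GY}$ in $l$ for fixed $u$. Your version simply spells out the details the paper leaves implicit, such as the branch-by-branch check that $H\ge 0$.
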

\begin{proof}
	By the condition on $A^*A$, we see $F(u)$ in \eqref{equation:iso} is coercive on $u$. Since, 
	\begin{equation*}
	F(u^k) \leq \inf_{l \in Y} L_{GY}(u^k, l) \leq   L_{GY}(u^k, l^k) \leq L_{GY}(u^0, l^0), 
	\end{equation*}
	we conclude that $u^k$ must be bounded by the coercivity of $F(u)$. It can be verified that $L_{GY}$ is also coercive on $l$ for any fixed $u$, we get the boundedness of $l^k$.
\end{proof}
\begin{lemma}\label{lem:bound:gr}
	With $N_k = \lambda I/2$, $M_k $ satisfying \eqref{eq:M:N:condition} and $A^*A$ has a bound inverse,  the sequence $\{(u^k, b^k), k\in \mathbb{N}\}$ of the iterations \eqref{eq:general:al:min} for $L_{GR}$ is bounded.
\end{lemma}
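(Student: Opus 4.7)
The plan is to mirror the strategy of Lemma~\ref{lem:bound:gy} but exploit the fact that the Geman–Reynolds formulation \eqref{eq:b;truncated:qua} already confines $b$ to a bounded set via the indicator function $I_{\{0\leq b\leq 1\}}$, so boundedness of $\{b^k\}$ will come for free, and only $\{u^k\}$ needs genuine work.

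First, I would invoke the monotonicity estimate \eqref{update:L_down} together with the feasibility of $(u^0,b^0)$ (assumed so that $L_{GR}(u^0,b^0)<+\infty$, which forces $b^0\in[0,1]^{MN}$) to conclude that the sequence $\{L_{GR}(u^k,b^k)\}$ is nonincreasing and hence uniformly bounded above by $L_{GR}(u^0,b^0)$. In particular, for every $k$ the value $L_{GR}(u^k,b^k)$ is finite, so the indicator term forces each component of $b^k$ to lie in $[0,1]$; this immediately yields boundedness of $\{b^k\}$ componentwise (and hence in norm).

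Next, to bound $\{u^k\}$ I would use the equivalence between \eqref{equation:iso} and \eqref{eq:b;truncated:qua}, namely $F(u)=\inf_{b}L_{GR}(u,b)$. This gives
\[
F(u^k)\;\leq\;L_{GR}(u^k,b^k)\;\leq\;L_{GR}(u^0,b^0).
\]
The assumption that $A^*A$ has a bounded inverse makes $D(u)=\|Au-u_0\|_2^2/2$ coercive in $u$, and since the isotropic truncated regularizer $\int_\Omega P^I(u)\,\mathrm{d}\sigma$ is nonnegative, $F(u)$ is coercive. The uniform bound on $F(u^k)$ therefore forces $\{u^k\}$ to be bounded.

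The argument is essentially routine once the indicator function is recognized as doing the heavy lifting for $b^k$. The only slightly delicate point — and the place where one should be careful — is verifying that one can start the iteration from a feasible $(u^0,b^0)$ so that $L_{GR}(u^0,b^0)<+\infty$; without this the indicator-driven bound on $b^k$ would be vacuous. Once this is in place, the combination of the descent property \eqref{update:L_down}, the pointwise box constraint on $b^k$ coming from $I_{\{0\le b\le 1\}}$, and the coercivity of $F$ in \eqref{equation:iso} gives boundedness of $\{(u^k,b^k)\}$ in a couple of lines.
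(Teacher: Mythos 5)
Your proposal is correct and follows essentially the same route as the paper: boundedness of $\{b^k\}$ from the box constraint enforced by $I_{\{0\le b\le 1\}}$, and boundedness of $\{u^k\}$ from the descent inequality $F(u^k)\le L_{GR}(u^k,b^k)\le L_{GR}(u^0,b^0)$ combined with the coercivity of $F$ guaranteed by the bounded inverse of $A^*A$. The only cosmetic difference is that the paper obtains $b^k\in[0,1]$ from the explicit projection update $b^{k+1}=\mathcal{P}_{[0,1]}(\cdot)$, whereas you deduce it from finiteness of the objective at the iterates; both are valid.
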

\begin{proof}
	The boundedness of $\{b^k\}$ follows from the constraint $I_{\{0\leq b \leq 1\}}(b)$ and the projection
	\begin{equation}\label{eq:proj:p}
	b^{k+1} = \mathcal{P}_{[0,1]}(b^k - \frac{\lambda}{\mu}|\nabla u^k|^2+1) \in [0,1],
	\end{equation}
	where $\mathcal{P}_{[0,1]}$ is the projection to $[0,1]$. 
	
	
	By the condition on $A^*A$,  we get the the coercivity of $L_{GR}$   \eqref{equation:iso}. 
	We thus get the boundedness of $u^k$ similarly to Lemma \ref{lem:bound:gy}. 
\end{proof}


Now, let's turn to the boundedness of the iterative sequence of $L_{GM}$.
\begin{lemma}
	With $N_k = \mu I/2$, $0 < b^0 \leq 1$, $M_k $  satisfying \eqref{eq:M:N:condition} 
	and $A^*A$ has a bounded inverse,
	the sequence $\{(u^k, b^k), k\in \mathbb{N}\}$ of the iterations \eqref{eq:general:al:min} for $L_{GM}$ is bounded.
\end{lemma}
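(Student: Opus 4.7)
The plan is to mimic the strategy of Lemmas~\ref{lem:bound:gy} and~\ref{lem:bound:gr}: use the joint descent inequality \eqref{update:L_down} to bound $L_{GM}(u^k,b^k)$ uniformly in $k$, and then read off boundedness of $u^k$ and $b^k$ from two separate coercivity-type properties of the summands. Because $M_k$ and $N_k$ satisfy \eqref{eq:M:N:condition}, the proximal quadratic terms in \eqref{update:L_down} are nonnegative, so iterating the joint descent gives $L_{GM}(u^k,b^k) \leq L_{GM}(u^0,b^0) =: C_0$ for every $k$; the initialization $0<b^0\leq 1$ guarantees $C_0<+\infty$.

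The key algebraic observation I would then exploit is the decomposition
\[
b+1-2\sqrt{b}=(\sqrt{b}-1)^2\geq 0,\quad b\geq 0,
\]
so that the regularization integrand in \eqref{eq:gm:b} splits into two nonnegative parts, $\tfrac{\mu}{2\lambda}\,b\,|\nabla u|^2$ and $\tfrac{\mu}{2}(\sqrt{b}-1)^2$. The constraint $b\geq 0$ is enforced implicitly through the $\sqrt{b}$ factor: any $b$ with finite $L_{GM}$-value must be componentwise nonnegative, so in particular all iterates $b^k$ are in $\mathbb{R}_{\geq 0}^{MN}$.

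From here the bound on $\{u^k\}$ is immediate: dropping the nonnegative regularization yields $D(u^k)\leq C_0$, and since $A^*A$ has bounded inverse, $D(u)=\|Au-u_0\|_2^2/2$ is coercive in $u$, so $\{u^k\}$ is bounded in $\mathbb{R}^{MN}$. For $\{b^k\}$, discarding the data term and the gradient term in $L_{GM}$ gives $\tfrac{\mu}{2}\sum_i(\sqrt{b^k_i}-1)^2\leq C_0$, hence $|\sqrt{b^k_i}-1|\leq\sqrt{2C_0/\mu}$ for every pixel $i$, and therefore $b^k_i\in[0,(1+\sqrt{2C_0/\mu})^2]$ for all $k$ and $i$.

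I do not anticipate a real obstacle here: the argument is the same monotone-descent plus coercivity template used for $L_{GY}$ and $L_{GR}$, once the $(\sqrt{b}-1)^2$ structure is noticed, which bypasses the box-constraint trick needed for $L_{GR}$ in Lemma~\ref{lem:bound:gr}. The specific choice $N_k=\mu I/2$ is not actually required for the boundedness conclusion; any $N_k$ satisfying \eqref{eq:M:N:condition} works, and the value $\mu I/2$ is presumably dictated by the desire to express the $b$-subproblem as a convenient closed-form update of the type described in Lemma~\ref{lem:feaible:percon}.
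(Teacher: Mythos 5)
Your argument is correct, but it takes a genuinely different route from the paper for the $b$-part. The paper does not use the $(\sqrt{b}-1)^2$ structure at all: it first proves the invariance $0<b^{k+1}\le 1$ by induction, writing the first-order condition of the $b$-update (with the specific choice $N_k=\mu I/2$) as a depressed cubic $x^3+px+q=0$ in $x=\sqrt{b_{k+1}}$ and bounding the Cardano root explicitly; only then does it invoke coercivity for $u^k$ exactly as you do. Your approach---monotone descent of $L_{GM}$ plus the observation that $b\tfrac{|\nabla u|^2}{\lambda}\ge 0$ and $b-2\sqrt{b}+1=(\sqrt{b}-1)^2\ge 0$ on the feasible set $b\ge 0$---is more elementary and more robust: it works for any $N_k$ satisfying \eqref{eq:M:N:condition} and any feasible $b^0$ with finite initial energy, and you are right that the hypothesis $N_k=\mu I/2$ is only needed to obtain the closed-form update \eqref{eq:update:b:gm}, not for boundedness. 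What the paper's route buys in exchange is the sharper invariance $b^k\in(0,1]$, which is used downstream (the closed-form update \eqref{eq:update:b:gm} in the algorithm table, and the strict positivity of the coefficients $\mathcal{B}^k$ in the linear subproblem \eqref{eq:original:linear:sys}); your bound $b^k_i\in[0,(1+\sqrt{2C_0/\mu})^2]$ suffices for the lemma as stated but does not recover that. One small point of rigor: rather than saying nonnegativity is ``enforced implicitly through the $\sqrt{b}$ factor,'' simply note that the minimization in \eqref{update:y} is taken over the constraint set $\{b\ge 0\}$ of \eqref{eq:gm:b}, so $b^k\ge 0$ holds by construction.
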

\begin{proof}
	We will first prove $0<b^{k+1} \le 1$ by induction on assumption $0<b^{k} \le 1$. The update for the $ b_{k+1}$ is $(\xi+1-\frac{1}{\sqrt{b_{k+1}}} +b_{k+1}-b_k)=0$. With introducing $x:=\sqrt{b_{k+1}}$, we arrive at
	\[
	x^3+px+q=0,  \ \,\  p:=\xi+1-b_k \geq 0, \ q:=-1, \ \xi=\frac{|\nabla u|^2}{\lambda}.
	\]
	Observing that $p>0$, $q<0$, we conclude $ \Delta:=\frac{q^2}{4}+\frac{p^3}{27} \geq \frac{1}{4}$ and $-\frac{1}{2}+\sqrt{\Delta} \geq 0$. Using the celebrated Cardano's formula for the depressed cubic equation, we get
	\begin{equation}\label{eq:update:b:gm}
	x=\sqrt[3]{\frac{1}{2}+\sqrt{\Delta}}+ \sqrt[3]{\frac{1}{2}-\sqrt{\Delta}}=\sqrt[3]{\frac{1}{2}+\sqrt{\Delta}}- \sqrt[3]{-\frac{1}{2}+\sqrt{\Delta}}.
	\end{equation}
	Besides, Noting that
	\[
	\frac{1}{2}+\sqrt{\Delta}=\sqrt[3]{-\frac{1}{2}+\sqrt{\Delta}}^3+\sqrt[3]{1}^3 
	\le(\sqrt[3]{-\frac{1}{2}+\sqrt{\Delta}} +\sqrt[3]{1})^3 \rightarrow \sqrt[3]{ \frac{1}{2}+\sqrt{\Delta}} \leq \sqrt[3]{-\frac{1}{2}+\sqrt{\Delta}} +{1}
	\]
	we obtain that $0<x\le1$, i.e. $0<b_{k+1}=x^2\le 1$.

	By the condition on $A^*A$,  we get the the coercivity of $L_{GM}$ on $u$  in \eqref{eq:gm:original}. 
	The boundedness of $\{u^k\}$ follows similarly to Lemma \ref{lem:bound:gy}. 
\end{proof}

\begin{lemma}
	With $N_k = \mu I/2$, $0\leq b^0 \leq 1$, $M_k $  satisfying \eqref{eq:M:N:condition} 
	and $A^*A$ has a bounded inverse,
	the sequence $\{(u^k, b^k), k\in \mathbb{N}\}$ of the iterations \eqref{eq:general:al:min} for $L_{HL}$ is bounded.
\end{lemma}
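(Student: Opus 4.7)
The plan is to mirror the two-step template used in the three preceding boundedness lemmas: first establish that the auxiliary sequence $\{b^k\}$ stays in a compact set (here some subset of $(0,1]$) by induction on the explicit first-order optimality condition, then combine the coercivity of $F_{HL}$ with the monotone decrease \eqref{update:L_down} to pin down $\{u^k\}$.

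First I would write out the optimality condition for the $b$-update. Fixing a pixel and setting $\xi := |\nabla u^{k+1}|^2/\lambda \geq 0$, minimizing $L_{HL}(u^{k+1},\cdot) + \tfrac{\mu}{4}\|\cdot - b^k\|_2^2$ yields, after clearing the $1/b$ term,
\begin{equation*}
b^2 + (\xi + 1 - b^k)\, b - 1 = 0,
\end{equation*}
whose unique positive root is
\begin{equation*}
b^{k+1} = \frac{-(\xi + 1 - b^k) + \sqrt{(\xi + 1 - b^k)^2 + 4}}{2}.
\end{equation*}
Strict positivity is automatic since $\sqrt{(\xi+1-b^k)^2+4} > |\xi+1-b^k|$, so $b^{k+1} > 0$ regardless of the sign of $\xi+1-b^k$. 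For the upper bound I would use induction: assuming $0 < b^k \leq 1$ so that $\xi + 1 - b^k \geq 0$, the inequality $b^{k+1} \leq 1$ is equivalent (after moving $(\xi+1-b^k)$ to the right and squaring) to $4 \leq 4 + 4(\xi + 1 - b^k)$, which holds. Hence $b^{k+1}\in(0,1]$ and the induction closes. Note that the base case $0 \leq b^0 \leq 1$ in the hypothesis is only needed to kick off the induction; after one step we already have $b^k > 0$, which is crucial since $\log b$ would otherwise blow up.

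Second, I would argue coercivity of the marginal functional $F_{HL}(u) = D(u) + \tfrac{\mu}{2}\int \log(1 + |\nabla u|^2/\lambda)\,\mathrm{d}\sigma$ in the variable $u$: the logarithmic term is nonnegative, and since $A^*A$ has a bounded inverse, $D(u) = \tfrac{1}{2}\|Au - u_0\|_2^2$ is coercive. Combining this with $F_{HL}(u^k) = \inf_b L_{HL}(u^k,b) \leq L_{HL}(u^k,b^k) \leq L_{HL}(u^0,b^0)$, where the last inequality comes from \eqref{update:L_down}, gives a uniform upper bound on $F_{HL}(u^k)$ and hence boundedness of $\{u^k\}$, exactly as in Lemma \ref{lem:bound:gy}.

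The main obstacle is the induction step in the first part. Unlike the $L_{GM}$ case where Cardano's formula on a depressed cubic did the job, here the algebra is a quadratic and easier, but one has to be careful about the sign of $\xi + 1 - b^k$ when squaring — this is where the induction hypothesis $b^k \leq 1$ (together with $\xi \geq 0$) is essential. The secondary subtlety is making sure $b^{k+1}$ remains strictly positive so that the $\log b$ term in $L_{HL}$ stays finite along the iteration; the explicit root formula handles this uniformly.
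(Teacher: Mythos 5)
Your proposal is correct and follows essentially the same route as the paper: derive the quadratic $b^2+(\xi+1-b^k)b-1=0$ from the optimality condition, take the positive root, show $0<b^{k+1}\le 1$ by induction using $a^2+4\le(a+2)^2$ for $a=\xi+1-b^k\ge 0$, and then deduce boundedness of $\{u^k\}$ from the coercivity of $F_{HL}$ and the energy decrease, exactly as in Lemma \ref{lem:bound:gy}. Your explicit remarks on the sign of $\xi+1-b^k$ before squaring and on strict positivity of $b^{k+1}$ are sound refinements of the paper's terser argument.
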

\begin{proof}
	Note that 
	$b_{k+1}=\argmin_b \frac{\mu}{2}(\xi b + b-\log b-1) +\frac{\|b-b_k\|_{\mu I}^2}{4}$ with
	$\xi:=\frac{|\nabla u|^2}{\lambda} \geq 0$
	leading to the quadratic equation $b^2 + (\xi +1-b^k)b - 1=0$. Using the quadratic formula and choosing the positive root,  we get 
	\begin{equation}\label{eq:update:b:hl}
	b_{k+1} = (-a+\sqrt{a^2+4})/{2}, \quad a=\xi+1 - b_k \geq0.
	\end{equation}
	By the assumption on $b^0$ and observing that $a^2+4 \le(a+2)^2$, we have $0 < b_{k+1}\le 1$ by deduction. The sequence $\{b^k\}$ is thus bounded. 
	
	By the condition on $A^*A$,  we get the coercivity of $L_{HL}(u,b)$ for $u$ as in \eqref {eq:HL:orignal}. 
	The boundedness of $\{u^k\}$ follows similarly to Lemma \ref{lem:bound:gy}. 
\end{proof}

\begin{lemma}
	The sequence $\{(u^k,s^k), k\in \mathbb{N}\}$ of the iterations \eqref{eq:general:al:min} for $L_{MS}$ is bounded. 
\end{lemma}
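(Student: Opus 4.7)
The approach is to mimic the pattern used in Lemmas \ref{lem:bound:gy}--8: exploit the descent inequality from \eqref{update:L_down} together with separate coercivity of $L_{MS}$ in each variable. Unlike the half--quadratic models, the data fidelity here is $\|u-u_0\|_2^2/2$ (no operator $A$ to invert), so no additional invertibility assumption is required.

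First I would record the consequence of \eqref{update:L_down}, namely that
\begin{equation*}
L_{MS}(u^k,s^k)\le L_{MS}(u^0,s^0)=:C_0<+\infty \quad \text{for all } k\in\mathbb{N}.
\end{equation*}
Then I would split $L_{MS}(u,s)$ into its four nonnegative summands
\begin{equation*}
\tfrac{1}{2}\|u-u_0\|_2^2,\quad \alpha\int_\Omega s^2|\nabla u|^2\,\mathrm{d}\sigma,\quad \lambda\varepsilon\int_\Omega|\nabla s|^2\,\mathrm{d}\sigma,\quad \tfrac{\lambda}{4\varepsilon}\|s-1\|_2^2,
\end{equation*}
and use that each individual summand is bounded by $C_0$ at the iterate $(u^k,s^k)$.

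The boundedness of $\{u^k\}$ then follows from the first summand: $\tfrac{1}{2}\|u^k-u_0\|_2^2\le C_0$ gives $\|u^k\|_2\le\|u_0\|_2+\sqrt{2C_0}$. Likewise, the boundedness of $\{s^k\}$ follows from the fourth summand: $\tfrac{\lambda}{4\varepsilon}\|s^k-1\|_2^2\le C_0$ yields $\|s^k\|_2\le \sqrt{|\Omega|}+2\sqrt{\varepsilon C_0/\lambda}$. In short, $L_{MS}$ is separately coercive in $u$ (through the pure quadratic data term) and in $s$ (through the Ambrosio--Tortorelli phase-field penalty $\|s-1\|_2^2$), and the monotone decrease produced by the proximal updates \eqref{update:x}--\eqref{update:y} carries these bounds to the full sequence.

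There is essentially no obstacle here; the only thing worth double checking is that every term other than the one being used to extract the bound is indeed nonnegative, which is immediate since each of the remaining three summands is either a squared $L^2$ norm weighted by a positive constant or an integral of a nonnegative integrand ($s^2|\nabla u|^2\ge 0$). Hence no cross cancellations can spoil the extraction of the coercivity bound, and the proof is complete in two short lines.
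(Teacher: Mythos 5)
Your proof is correct and follows essentially the same route as the paper: the monotone decrease of $L_{MS}(u^k,s^k)$ from \eqref{update:L_down} combined with coercivity of $L_{MS}$ in $(u,s)$ yields boundedness. The paper simply asserts the joint coercivity, whereas you make it explicit by isolating the nonnegative summands $\tfrac12\|u-u_0\|_2^2$ and $\tfrac{\lambda}{4\varepsilon}\|s-1\|_2^2$ and extracting quantitative bounds; this is a slightly more detailed rendering of the same argument.
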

\begin{proof}
	It can be seen that although $L_{MS}$ in \eqref{eq:ms:b} is not convex on $(u,s)$, it is however coercive on $(u,s)$, since $L_{MS}(u,s) \rightarrow +\infty$ either $|u|\rightarrow +\infty$ or $|s| \rightarrow +\infty$. Then $(u^k,s^k)$ must be bounded by the boundednes of $L_{MS}(u^k,s^k)$ for all $k$.
\end{proof}
With \eqref{eq:proj:p}, \eqref{eq:update:b:gm}, and \eqref{eq:update:b:hl}, we get the updates of $b^k$ for $L_{GR}$, $L_{GM}$, and $L_{HL}$ correspondingly. Now, let's discuss the update of $l^k$ for $L_{GY}$.
\begin{lemma}\label{new_update_x:GY}
	Assuming $N_k = \kappa_k I$, for the $l_i$ updates of the anisotropic $L_{GY}^A(u,l)$ with $i=1,2$ $\emph{(}$or for the isotropic $L_{GY}(u,l)$ with $i=I$ case$\emph{)}$, we have
	\begin{equation}\label{GY:l_update}
	l_i^{k+1}=\begin{cases}
	\hat{l}^k_i-\sqrt{a}\tau\frac{\hat{l}^k_i}{|\hat{l}^k_i|}\quad&\tau\sqrt{a}<|\hat{l}^k_i|<(1+\tau)\sqrt{a}\\
	0\quad&-\tau\sqrt{a}\le |\hat{l}_i^k|\le\tau\sqrt{a}\\
	\frac{1}{1+\tau}\hat{l}^k_i\quad&|\hat{l}^k_i|>(1+\tau)\sqrt{a}
	\end{cases},
	\end{equation}
	where $a=\frac{\lambda}{\mu},\tau=\frac{\mu}{\kappa_k},\hat{l}^k_i=l_i^{k}+\frac{\mu}{\kappa_k} \nabla_i u^{k+1}$ with  $i=1,2$ for the anisotropic case and the norm $|\cdot|$ denotes the absolute value $\emph{(}$or for the isotropic case  with $i=I$, $l^k_I=({l_1^k}_{I},{l_2^k}_{I})$, $\nabla_I=\nabla$,  and  $|l_{I}|=|(l_1,l_2)|=\sqrt{l_1^2+l_2^2}$$\emph{)}$. 
\end{lemma}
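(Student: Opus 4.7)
The plan is to write the $l_i$-subproblem as an explicit proximal subproblem and exploit the two-branch piecewise structure of $H(\cdot;\lambda/\mu)$. Writing $a=\lambda/\mu$, $\tau=\mu/\kappa_k$, and $\hat{l}_i^k:=l_i^k+\tau\nabla_i u^{k+1}$, the minimization
\[
\min_{l_i}\ \Phi(l_i):=\frac{\mu}{2}|\nabla_i u^{k+1}-l_i|^2 + \mu H(l_i;a) + \frac{\kappa_k}{2}|l_i-l_i^k|^2
\]
is to be analyzed on each of the two pieces $\{|l_i|\le\sqrt{a}\}$ and $\{|l_i|>\sqrt{a}\}$, with the global minimizer extracted afterwards.

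On $\{|l_i|\le\sqrt{a}\}$ I would substitute $\mu H(l_i;a)=\mu\sqrt{a}|l_i|-\tfrac{\mu}{2}|l_i|^2$. The non-convex term $-\tfrac{\mu}{2}|l_i|^2$ cancels with part of $\tfrac{\mu}{2}|\nabla_i u^{k+1}-l_i|^2$; using the identity $\mu\nabla_i u^{k+1}+\kappa_k l_i^k=\kappa_k\hat{l}_i^k$ and completing the square, $\Phi$ reduces to
\[
\frac{\kappa_k}{2}|l_i-\hat{l}_i^k|^2 + \mu\sqrt{a}|l_i| + c.
\]
Dividing by $\kappa_k$ turns the minimization into the proximal mapping of $\tau\sqrt{a}|\cdot|$ at $\hat{l}_i^k$, with closed-form vector soft-thresholding $l_i=\max(|\hat{l}_i^k|-\tau\sqrt{a},0)\,\hat{l}_i^k/|\hat{l}_i^k|$; admissibility $|l_i|\le\sqrt{a}$ forces $|\hat{l}_i^k|\le(1+\tau)\sqrt{a}$. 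On $\{|l_i|>\sqrt{a}\}$ the term $\mu H(l_i;a)=\lambda/2$ is constant, leaving a purely quadratic subproblem with unconstrained minimizer $\hat{l}_i^k/(1+\tau)$, admissible exactly when $|\hat{l}_i^k|>(1+\tau)\sqrt{a}$. The three regimes in \eqref{GY:l_update} are exactly these admissibility intervals for $|\hat{l}_i^k|$, together with the soft-threshold dead zone $|\hat{l}_i^k|\le\tau\sqrt{a}$ where the shrinkage collapses to $0$; continuity of the formula at the two thresholds $\tau\sqrt{a}$ and $(1+\tau)\sqrt{a}$ will serve as a sanity check.

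To upgrade each piecewise minimizer to a global one, I would use that the two admissibility regions $\{|\hat{l}_i^k|\le(1+\tau)\sqrt{a}\}$ and $\{|\hat{l}_i^k|>(1+\tau)\sqrt{a}\}$ are complementary and each contains the unconstrained critical point of only one piece; in the other piece the strictly convex restricted objective attains its infimum on the interface $|l_i|=\sqrt{a}$, where $\Phi$ is continuous, so its value there cannot improve on the value at the admissible critical point. The isotropic case ($i=I$, $l_I\in\mathbb{R}^2$, $|l_I|=\sqrt{l_1^2+l_2^2}$) follows from the same calculation since everything above depends only on the Euclidean norm $|\cdot|$ and the inner product with $\hat{l}_I^k$. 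The main obstacle is precisely the non-convexity of $H$: handled naively it would fracture the first-order characterization across the two pieces, so the central step is the cancellation of $-\tfrac{\mu}{2}|l_i|^2$ against the proximal quadratic, which restores strong convexity on each piece and reduces the subproblem on $\{|l_i|\le\sqrt{a}\}$ to a standard soft-thresholding.
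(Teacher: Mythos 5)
Your proposal is correct, and the underlying computation is the same as the paper's: completing the square to produce $\hat{l}_i^k$, the cancellation of the concave term $-\tfrac{\mu}{2}|l_i|^2$ against the quadratic coupling, soft-thresholding with threshold $\tau\sqrt{a}$ on the inner region, and the scaled identity $\hat{l}_i^k/(1+\tau)$ on the outer region. Where you differ is in how global optimality is certified. The paper invokes its Lemma on the Fenchel conjugate to rewrite $\mu H(l_i;a)$ via $h_a^*(l_i)=H(l_i;a)+|l_i|^2/2-a/2$, so the subproblem becomes $\argmin_{l_i} h_a^*(l_i)+\frac{1}{2\tau}|l_i-\hat{l}_i^k|^2$ with $h_a^*$ convex; the first-order inclusion $0\in\partial h_a^*(l_i)+\frac{1}{\tau}(l_i-\hat{l}_i^k)$ is then necessary and sufficient, and the three regimes drop out of the three branches of $\partial h_a^*$ with no further comparison needed. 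You instead keep the nonconvex $H$, minimize the two restricted (strongly convex, after cancellation) problems separately, and then argue via continuity of $\Phi$ at the interface $|l_i|=\sqrt{a}$ that the piece whose unconstrained critical point is admissible wins. Your route is more elementary in that it avoids subdifferential calculus for the conjugate, at the cost of the extra interface-comparison step; the paper's route buys a one-line optimality certificate from convexity but leans on the earlier duality lemma. Both are sound, and your sanity check (continuity of the resulting formula at $\tau\sqrt{a}$ and $(1+\tau)\sqrt{a}$) is a useful addition the paper does not state.
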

\begin{proof}
	Using \eqref{update:y} and noting $h_{\frac{\lambda}{\mu}}^*(l_i)= H(l_i;\frac{\lambda}{\mu}) -{l_i^2}/{2}+\frac{1}{2} \frac{\lambda}{\mu}$, we obtain that:
	\begin{align*}
	l_i^{k+1} &= \argmin_{l_i}\frac{\mu}{2}\|\nabla_i u^{k+1} -l_i\|^2 + \mu H(l_i;\frac{\lambda}{\mu}) + \frac12\|l_i-l_i^{k}\|_{\kappa_kI}^2 \\
	&=\argmin_{l_i}  h_{a}^*(l_i) + \frac{1}{2\mu}\|l_i-(l_i^{k}+\frac{\mu}{\kappa_k} \nabla_i u^{k+1})\|_{\kappa_k I}^2\\
	&=\argmin_{l_i} h_{a}^*(l_i)+\frac{1}{2\mu}\|l_i-\hat{l}^k_i\|_{\kappa_k I}^2,
	\end{align*}
	where $h_{\tau}^*$ is the same as in \eqref{eq:dual:max:func}. Using the first-order optimal condition, we have 
	\begin{equation}\label{GY:anisotropic:l_update}
	0\in \partial h_a^*(l_i^{k+1})+\frac{1}{\tau}(l_i^{k+1}-\hat{l}^k_i)=\begin{cases}
	\sqrt{a}\frac{l_i^{k+1}}{|l_i^{k+1}|}+\frac{1}{\tau}(l^{k+1}_i-\hat{l}^k_i)\quad&0<|l_i^{k+1}|<\sqrt{a}\\
	B_{\sqrt{a}}(0)-\frac{1}{\tau}\hat{l}^k_i\quad&l_i^{k+1}=0\\
	l^{k+1}_i+\frac{1}{\tau}(l^{k+1}_i-\hat{l}^k_i)\quad&|l^{k+1}_i|\ge\sqrt{a}
	\end{cases},
	\end{equation}
	where $B_{\sqrt{a}}(0)$ is a closed ball with center 0 and radius $\sqrt{a}$ in $\mathbb{R}$ or $\mathbb{R}^2$. Solving this equation, we get \eqref{GY:anisotropic:l_update}.
\end{proof}

Now let's turn to the global convergence of the iteration \eqref{eq:general:al:min} for our models. Actually, with the boundedness of $\{(u^k, y^k)\}$ proved above, by \cite{ABS} (Theorem 6.2), we can get the global convergence.
The following proposition will tell the convergence properties, i.e., Proposition \ref{prop:summable}, whose proof can be obtained by slight modifications of the proof in \cite{ABRC} (Proposition 3.1) and is omitted here. 
\begin{proposition}\label{prop:summable}
	Let $\{(u^k, y^k)\}$ be a sequence generated by our algorithm \eqref{eq:general:al:min}. Denoting $\Gamma$ as the set of accumulation points of the sequence $\{(u^k, y^k)\}$, then the following statements hold:\label{theroem:convergence_property}
	\begin{enumerate}
		\item[\emph{(i)}] 
		$\lim_{k\to\infty}(\|u^k-u^{k-1}\|_{M_{k-1}}^2+\|y^k-y^{k-1}\|_{N_{k-1}}^2)=0$ and it follows from
		\[
		\sum_{k=1}^{\infty}(\|u^k-u^{k-1}\|_{M_{k-1}}^2+\|y^k-y^{k-1}\|_{N_{k-1}}^2)<+\infty;
		\] 
		\item[\emph{(ii)}] if $(u^k, y^k)$ is bounded, then $\Gamma$ is a nonempty compact connected set, and
		\item[\emph{(iii)}] $\Gamma\subset \emph{crit} \ L$. Moreover, $L(u,y)\equiv\zeta$ on $\Gamma$ is constant, where $\zeta=\inf L(u^k, y^k)<+\infty$;
		\[
		\emph{dist}((u^k, y^k), \Gamma)\to 0, \ \text{as} \ k\to\infty,
		\]
		where  $\emph{crit} \ L:=\{ (\bar x,\bar y): 0 \in \partial L(\bar x,\bar y)\}$.
	\end{enumerate}
\end{proposition}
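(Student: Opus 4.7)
The plan is to follow the standard template for convergence of proximal alternating minimization as in Proposition 3.1 of \cite{ABRC} (see also \cite{ABS,BST}), with the only adaptation being the $k$-dependent metrics $M_k, N_k$ constrained by the uniform two-sided bounds \eqref{eq:M:N:condition}. For part (i), I would sum the three-point descent inequality \eqref{update:L_down}(c) from $k=1$ to $N$, noting that the $L$-values telescope to give
\begin{equation*}
\tfrac12\sum_{k=1}^{N}\bigl(\|u^k-u^{k-1}\|_{M_{k-1}}^2 + \|y^k-y^{k-1}\|_{N_{k-1}}^2\bigr) \leq L(u^0, y^0) - L(u^N, y^N).
\end{equation*}
In each of our models $L$ is bounded below (by $0$: use nonnegativity of $D(u)$ together with, e.g., the indicator $I_{[0,1]}$ in $L_{GR}$, the elementary estimate $b-2\sqrt{b}+1 \geq 0$ in $L_{GM}$, or the combined logarithmic terms in $L_{HL}$), so the right-hand side stays uniformly bounded as $N \to \infty$, yielding the claimed summability and hence the vanishing-increment limit.

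For part (ii), boundedness of $\{(u^k, y^k)\}$ in a finite-dimensional space makes $\Gamma$ nonempty and bounded, and a standard diagonal-subsequence argument shows closedness, hence compactness. Connectedness follows from the classical fact that the cluster set of a bounded sequence with vanishing successive increments is connected; the required Euclidean vanishing $\|(u^k,y^k) - (u^{k-1},y^{k-1})\| \to 0$ follows from (i) together with the lower bounds $M_k \geq \gamma_- I$, $N_k \geq \mu_- I$ from \eqref{eq:M:N:condition}.

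Part (iii) is the substantive step and also carries the main obstacle. For a cluster point $(\bar u, \bar y) = \lim_j (u^{k_j}, y^{k_j})$, I would write out the first-order conditions of the two proximal subproblems as
\begin{equation*}
-M_k(u^{k+1}-u^k) \in \partial_u L(u^{k+1}, y^k), \quad -N_k(y^{k+1}-y^k) \in \partial_y L(u^{k+1}, y^{k+1}),
\end{equation*}
and pass to the limit along $k_j$: the left-hand sides vanish thanks to (i) and the upper bounds $M_k \leq \gamma_+ I$, $N_k \leq \mu_+ I$, so outer semicontinuity of the limiting subdifferential gives $0 \in \partial L(\bar u, \bar y)$, i.e.\ $\Gamma \subset \mathrm{crit}\, L$. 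The main obstacle is that this closed-graph step requires $L(u^{k_j}, y^{k_j}) \to L(\bar u, \bar y)$, not merely $\liminf$-convergence; the standard remedy, which carries over directly here, is to plug the cluster point $\bar u$ (resp.\ $\bar y$) as a test point into the argmin characterization of \eqref{update:x} (resp.\ \eqref{update:y}) to get
\begin{equation*}
L(u^{k+1}, y^k) + \tfrac12\|u^{k+1}-u^k\|_{M_k}^2 \leq L(\bar u, y^k) + \tfrac12\|\bar u - u^k\|_{M_k}^2,
\end{equation*}
which together with vanishing increments, $M_k \leq \gamma_+ I$, and lower semicontinuity of $L$ yields $\lim_j L(u^{k_j}, y^{k_j}) = L(\bar u, \bar y)$. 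Finally, monotone convergence $L(u^k,y^k) \downarrow \zeta$ along the whole sequence then pins $L \equiv \zeta$ on $\Gamma$, and $\mathrm{dist}((u^k,y^k), \Gamma) \to 0$ is immediate from boundedness combined with the definition of the cluster set.
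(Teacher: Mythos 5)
Your proposal is correct and follows exactly the route the paper itself takes: the paper omits the proof, stating it is obtained ``by slight modifications of the proof in \cite{ABRC} (Proposition 3.1),'' and your argument is a faithful reconstruction of that standard proximal alternating minimization proof, including the descent-summation for (i), the cluster-set facts for (ii), and the test-point trick to recover convergence of function values before invoking outer semicontinuity of $\partial L$ in (iii). The only detail worth making explicit is that passing from the two partial optimality conditions (evaluated at $(u^{k+1},y^k)$ and $(u^{k+1},y^{k+1})$ respectively) to an element of the joint subdifferential $\partial L(u^{k+1},y^{k+1})$ uses the structure $L=f+Q+g$ with $\nabla Q$ locally Lipschitz, exactly as in \cite{ABRC}.
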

The following Theorem \ref{thm:finite:length} can be found in \cite{ABRC} (see Theorem 4.6 therein), which tells that the iteration sequence $\{(u^k, y^k)\}$ has a finite length  and converges to a critical point.  
\begin{theorem}\label{thm:finite:length}
	Assuming $L$ in \eqref{eq:general:al:min} is a KL-function and $\{(u^k, y^k)\}$ is bounded, we have
	\begin{equation}
	\sum_{k=1}^{+\infty}(\|u^k-u^{k-1}\|_{M_{k-1}}+\|y^k-y^{k-1}\|_{N_{k-1}})<+\infty, \label{equation:convergence}
	\end{equation}
	which means that $(u^k, y^k)$ converges to a critical point of $L$. 
\end{theorem}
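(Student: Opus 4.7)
The plan is to apply the abstract convergence framework of \cite{ABRC} (Theorem 4.6), which delivers finite length of any descent sequence once three standard ingredients hold: a sufficient decrease of $L$, a relative error bound on the subgradients in terms of successive step sizes, and a continuity condition along the sequence. Boundedness and the KL property of $L$ are assumed in the statement; what remains is to read off these three ingredients from \eqref{eq:general:al:min} and \eqref{eq:M:N:condition}.

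The sufficient decrease comes immediately from \eqref{update:L_down}(c) combined with $M_{k-1}\ge\gamma_- I$ and $N_{k-1}\ge\mu_- I$, yielding
\begin{equation*}
L(u^k,y^k)+\tfrac{\gamma_-}{2}\|u^k-u^{k-1}\|^2+\tfrac{\mu_-}{2}\|y^k-y^{k-1}\|^2\le L(u^{k-1},y^{k-1}).
\end{equation*}
For the relative error bound, I would write the first-order optimality conditions for the two proximal subproblems,
\begin{equation*}
-M_k(u^{k+1}-u^k)\in \partial f(u^{k+1})+\nabla_u Q(u^{k+1},y^k),\qquad -N_k(y^{k+1}-y^k)\in \partial g(y^{k+1})+\nabla_y Q(u^{k+1},y^{k+1}).
\end{equation*}
The $y$-inclusion already sits in $\partial_y L(u^{k+1},y^{k+1})$. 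For the $u$-inclusion, adding $\nabla_u Q(u^{k+1},y^{k+1})-\nabla_u Q(u^{k+1},y^k)$ to both sides places the result in $\partial_u L(u^{k+1},y^{k+1})$. The upper bounds $M_k\le\gamma_+ I$, $N_k\le\mu_+ I$ together with local Lipschitz continuity of $\nabla Q$ (made uniform on the compact orbit thanks to boundedness of $\{(u^k,y^k)\}$, with some constant $L_Q$) then give
\begin{equation*}
\dist\bigl(0,\partial L(u^{k+1},y^{k+1})\bigr)\le(\gamma_++L_Q)\|u^{k+1}-u^k\|+(\mu_++L_Q)\|y^{k+1}-y^k\|.
\end{equation*}

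Finally, Proposition \ref{prop:summable} supplies the continuity ingredient: $L(u^k,y^k)$ is monotone decreasing to some $\zeta$, the accumulation set $\Gamma$ is nonempty, compact, connected with $L\equiv\zeta$ on $\Gamma$, and $\dist((u^k,y^k),\Gamma)\to 0$. The uniformized KL lemma (e.g.\ Lemma 6 of \cite{ABS}) then produces a single concave $\psi$ such that \eqref{eq:kl:def} holds simultaneously at every iterate past some index. Using concavity of $\psi$,
\begin{equation*}
\psi\bigl(L(u^k,y^k)-\zeta\bigr)-\psi\bigl(L(u^{k+1},y^{k+1})-\zeta\bigr)\ge \psi'\bigl(L(u^k,y^k)-\zeta\bigr)\,\bigl[L(u^k,y^k)-L(u^{k+1},y^{k+1})\bigr],
\end{equation*}
and substituting the sufficient decrease on the right and the subgradient bound into $\psi'(\cdot)\dist(0,\partial L(\cdot))\ge 1$, a Young inequality produces the Attouch--Bolte--Svaiter type estimate
\begin{equation*}
\|u^{k+1}-u^k\|+\|y^{k+1}-y^k\|\le C\bigl[\psi(L(u^k,y^k)-\zeta)-\psi(L(u^{k+1},y^{k+1})-\zeta)\bigr]+\tfrac{1}{2}\bigl(\|u^k-u^{k-1}\|+\|y^k-y^{k-1}\|\bigr).
\end{equation*}
Summing over $k$ telescopes the $\psi$-differences and absorbs the step-size tail, yielding \eqref{equation:convergence}; the sequence $\{(u^k,y^k)\}$ is therefore Cauchy, hence convergent, and its limit lies in $\mathrm{crit}\,L$ by Proposition \ref{prop:summable}(iii). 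The step requiring the most care is the subgradient bound, because the optimality condition for $u^{k+1}$ naturally involves $(u^{k+1},y^k)$ rather than $(u^{k+1},y^{k+1})$; one must pay the Lipschitz cost of moving in the $y$-coordinate, and it is precisely the boundedness of the iterates that turns the merely local Lipschitz property of $\nabla Q$ into a usable global constant.
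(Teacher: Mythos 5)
Your proposal is correct and follows exactly the route the paper takes: the paper simply invokes Theorem 4.6 of \cite{ABRC}, and your argument is a faithful reconstruction of that theorem's proof (sufficient decrease from \eqref{update:L_down}, the relative-error subgradient bound with the Lipschitz correction in the $y$-coordinate, and the uniformized KL/telescoping step), together with a verification that the hypotheses hold for the scheme \eqref{eq:general:al:min} under \eqref{eq:M:N:condition}. You supply the details the paper delegates to the reference, including the one genuinely delicate point (upgrading local Lipschitz continuity of $\nabla Q$ to a uniform constant on the bounded orbit), so there is nothing to correct.
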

Actually, with the global  convergence for convex and proximal alternating optimization given in \cite{Aus, CP0}, our preconditioned framework can also deal with convex half quadratic models.  
We next consider the convergence rate of the sequence $(u^k,y^k)$ for our models if the KL exponent of $L(u,y)$ is known. This kind of convergence rate analysis is standard; see \cite{AB,ABRC,LP} for more comprehensive analysis and is omitted here.
\begin{theorem}[convergence rate]\label{thm:rate}
	Assume that $(u^k,y^k)$ converges to $(\bar{u},\bar{y})$ and $L(u,y)$ has the KL property at $(\bar{u},\bar{y})$ with $\psi(s)=cs^{1-\theta},\theta\in\left[0,1\right)$, $c>0$. Then the following estimations hold:
	\begin{enumerate}
		\item If $\theta=0$ then the sequence $(u^k,y^k)$ converges in a finite number of steps.
		\item If $\theta\in\left(0,\frac12\right]$ then there exist $c>0$ and $\tau\in\left[0,1\right)$, such that $\|(u^k,y^k)-(\bar{u},\bar{y})\|\le c\tau^k$.
		\item If $\theta\in\left(\frac12,1\right]$ then there exists $c>0$, such that $\|(u^k,y^k)-(\bar{u},\bar{y})\|\le c k^{-\frac{1-\theta}{2\theta-1}}$.
	\end{enumerate}
\end{theorem}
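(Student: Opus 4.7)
The plan is to adapt the standard Kurdyka--\L ojasiewicz based rate template of Attouch--Bolte--Redont--Soubeyran (see \cite{AB,ABRC,LP}), using the three structural estimates already in hand for the preconditioned scheme \eqref{eq:general:al:min}: (i) the sufficient-decrease inequality \eqref{update:L_down}, which via \eqref{eq:M:N:condition} dominates $\|u^{k+1}-u^k\|^2 + \|y^{k+1}-y^k\|^2$; (ii) a subgradient bound of the form $\dist(0,\partial L(u^{k+1},y^{k+1})) \leq C_1(\|u^{k+1}-u^k\| + \|y^{k+1}-y^k\|)$, obtained by writing out the first-order conditions for \eqref{update:x}--\eqref{update:y}, subtracting off the proximal term, and invoking the local Lipschitz continuity of $\nabla Q$ near the bounded trajectory together with \eqref{eq:M:N:condition}; and (iii) the KL inequality with $\psi(s) = cs^{1-\theta}$, which as in \eqref{eq:KL:exponent:theta:exam} reads $\dist(0,\partial L(u^k,y^k)) \geq \bar c\, e_k^{\theta}$ for $e_k := L(u^k,y^k) - L(\bar u,\bar y)$.

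Set $r_k := \|u^{k+1}-u^k\|_{M_k} + \|y^{k+1}-y^k\|_{N_k}$ and $\Delta_k := \sum_{j\geq k} r_j$, which is finite by Theorem \ref{thm:finite:length}. The triangle inequality gives $\|(u^k,y^k) - (\bar u,\bar y)\| \leq C \Delta_k$, so it suffices to bound $\Delta_k$. Combining (i), (ii) with the concavity estimate $\psi(e_k) - \psi(e_{k+1}) \geq \psi'(e_k)(e_k - e_{k+1})$ and the KL inequality yields, after an AM--GM step, the master recursion
\begin{equation*}
r_k \leq \tfrac{1}{2} r_{k-1} + C_2 \bigl(\psi(e_k) - \psi(e_{k+1})\bigr),
\end{equation*}
which summed from $k$ onwards gives $\Delta_k \leq C_3 \psi(e_k) + C_4 r_{k-1}$. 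Eliminating $\dist(0,\partial L(u^k,y^k))$ between (ii) and (iii) and substituting $\psi(s) = cs^{1-\theta}$ produces the complementary bound $\psi(e_k) \leq C_5 r_{k-1}^{(1-\theta)/\theta}$.

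Reading off the three cases is then routine. If $\theta = 0$, \eqref{eq:KL:exponent:theta:exam} forces $\dist(0,\partial L(u^k,y^k)) \geq \bar c$ whenever $e_k > 0$, which via (ii) contradicts $r_{k-1} \to 0$ unless the sequence stabilizes in finitely many steps. If $\theta \in (0,\tfrac{1}{2}]$ then $(1-\theta)/\theta \geq 1$, so $\psi(e_k) \leq C_5 r_{k-1}$ and the master bound becomes $\Delta_k \leq C_6 r_{k-1} = C_6(\Delta_{k-1} - \Delta_k)$, i.e.\ $\Delta_k \leq q\,\Delta_{k-1}$ with $q := C_6/(1+C_6) \in (0,1)$, giving the geometric rate $\|(u^k,y^k) - (\bar u,\bar y)\| \leq c\tau^k$. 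If $\theta \in (\tfrac{1}{2},1)$ then $(1-\theta)/\theta \in (0,1)$ and one instead obtains $\Delta_k^{\theta/(1-\theta)} \leq C_7(\Delta_{k-1} - \Delta_k)$; a comparison with the continuous ODE $\dot\varphi = -\varphi^{\theta/(1-\theta)}$ (equivalently, the standard discrete lemma in \cite{AB}) yields $\Delta_k \leq C_8\, k^{-(1-\theta)/(2\theta-1)}$.

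The only nontrivial step is (ii): extracting from the optimality conditions of the \emph{preconditioned} updates a subgradient element whose norm is controlled by $\|u^{k+1}-u^k\| + \|y^{k+1}-y^k\|$ uniformly in $k$. This is where the two-sided bounds \eqref{eq:M:N:condition} on the proximal metrics $M_k, N_k$, the boundedness of $\{(u^k,y^k)\}$ established throughout Section \ref{sec:convergence}, and the smoothness of the quadratic coupling $Q$ on bounded sets all enter; once this subgradient estimate is in place, the remainder of the argument is model-independent and reduces to the cited rate template.
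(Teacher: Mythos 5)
Your proposal is correct and follows exactly the standard KL rate template of \cite{AB,ABRC,LP} that the paper itself invokes: the paper omits the proof entirely, stating only that "this kind of convergence rate analysis is standard," so your reconstruction (sufficient decrease, the subgradient bound via the optimality conditions of the preconditioned updates with $\nabla Q$ Lipschitz on the bounded trajectory, the concavity/KL combination, and the three-case analysis of $\Delta_k$) is precisely the argument being referenced. No discrepancy with the paper's approach.
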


With Lemma \ref{lem:ani:GY:KL} and Theorem \ref{thm:rate} we get the following convergence rate for \eqref{eq:L;truncated:qua:ani}.
\begin{corollary}\label{cor:linear}
	Due to Lemma \ref{lem:ani:GY:KL}, the anisotropic $L_{GY}^A(u,l)$ is a KL function of $(u,l)$ with KL exponent $ \frac12$. The sequence $(u^k,l^k)$ generated by preconditioned alternating iterations \ref{eq:general:al:min} thus has linear convergence rate due to Theorem \ref{thm:rate}. 
\end{corollary}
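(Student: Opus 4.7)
The plan is to chain together the KL exponent result for $L_{GY}^A$ with the general convergence rate theorem stated just above. The corollary is essentially a direct consequence, so the proof will be short, but I want to lay out each logical step cleanly. First, I would verify that the sequence $(u^k, l^k)$ generated by the preconditioned alternating iterations \eqref{eq:general:al:min} applied to $L_{GY}^A$ satisfies the hypotheses of Theorem \ref{thm:rate}, namely convergence to some limit point $(\bar u, \bar l)$ and the KL property at that limit with the appropriate exponent.

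Convergence of the full sequence follows from Theorem \ref{thm:finite:length}, which requires $L_{GY}^A$ to be a KL function and $(u^k, l^k)$ to be bounded. Boundedness is the anisotropic analogue of Lemma \ref{lem:bound:gy}: the argument there uses only the coercivity of the objective $F$ on $u$ (which holds because $A^*A$ has a bounded inverse) together with the coercivity of $L_{GY}$ in $l$ for fixed $u$; both properties carry over to the anisotropic truncated quadratic model componentwise, so $\{(u^k,l^k)\}$ is bounded. The KL property of $L_{GY}^A$ is Lemma \ref{lem:ani:GY:KL}, which in fact gives the sharper statement that the exponent equals $\tfrac{1}{2}$ at every point of $\dom \partial L_{GY}^A$, hence in particular at the limit $(\bar u, \bar l)$.

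With these ingredients in place, I would simply invoke Theorem \ref{thm:rate} with $\theta = \tfrac{1}{2}$. Since $\theta \in (0, \tfrac{1}{2}]$, case (2) of the theorem applies and yields constants $c>0$ and $\tau \in [0,1)$ such that
\begin{equation*}
\|(u^k, l^k) - (\bar u, \bar l)\| \le c\, \tau^k,
\end{equation*}
which is the desired linear (Q-linear in this bound, R-linear overall) convergence rate.

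There is really no main obstacle: the combinatorial work of establishing the KL exponent has already been done in Lemma \ref{lem:ani:GY:KL}, and the abstract rate machinery is taken from Theorem \ref{thm:rate}. The only mildly nontrivial point is making sure the boundedness hypothesis of Theorem \ref{thm:finite:length} holds in the anisotropic setting; I would make this explicit by noting that the isotropic boundedness argument of Lemma \ref{lem:bound:gy} is separable in the two gradient components and therefore transfers verbatim to $L_{GY}^A$.
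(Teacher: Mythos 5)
Your proposal is correct and follows essentially the same route as the paper, which treats the corollary as an immediate chaining of Lemma \ref{lem:ani:GY:KL} (KL exponent $\tfrac12$) with Theorem \ref{thm:rate}, case $\theta\in(0,\tfrac12]$. Your additional verification that the hypotheses of Theorem \ref{thm:rate} hold --- full-sequence convergence via Theorem \ref{thm:finite:length} together with the anisotropic analogue of the boundedness argument in Lemma \ref{lem:bound:gy} --- is a sound and slightly more careful spelling-out of what the paper leaves implicit.
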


\section{Numerical Part}\label{sec:numer}
In this part, we will first discuss the finite difference method and the preconditioners. Although, the linear equations are in  divergence form with varying coefficients such as the linear subproblems for the $L_{GR}$, $L_{GM}$, $L_{HL}$, and $L_{MS}$, however, we can still benefit from the preconditioning framework by dealing with the linear subproblems with any finite times symmetric Gauss-Seidel iterations. The numerical experiments focused on image denoising and segmentation also show the efficiency of the proconditioned framework. 
\subsection{The Preconditioners and the finite difference  method}

Now let's turn to the discretization of the following equation with $\mathbb{B}$ that is similar in \eqref{eq:bk:general} which covers all the linear equations discussed in this paper
\begin{equation}\label{equation:discrete}
\gamma (x) u + \nabla^* \mathbb{B}(x)\nabla u = z(x),  \quad \mathbb{B}(x)  = \text{Diag}[d^1(x), d^2(x)],  \ x \in \Omega.
\end{equation}
Here we use $d^1$, $d^2$ instead of $b_1$, $b_2$ as in \eqref{eq:bk:general}  for the convenience of lower indices to be employed.
In order to discrete equation \eqref{equation:discrete}, let's first introduce the widely used forward and backward difference operators $\nabla^{+}:=(\nabla_{x}^{+}, \nabla_{y}^{+})^{T}$ and $ \nabla^{-}:=(\nabla_{x}^{-}, \nabla_{y}^{-})^T$  \cite{CP0}
\begin{equation}
\nabla_x^+u_{i,j}=\begin{cases}
u_{i+1,j}-u_{i,j}\quad&1\le i<m,\\
0\quad&i=m,
\end{cases}\quad 
\nabla_y^+u_{i,j}=\begin{cases}
u_{i,j+1}-u_{i,j}\quad&1\le j<n,\\
0\quad&i=n,
\end{cases}
\end{equation}
and 
\begin{equation}
\nabla_x^-u_{i,j}=\begin{cases}
u_{1,j}\quad&i=1,\\
u_{i,j}-u_{i-1,j}\quad&1< i\le m,\\
-u_{m-1,j}\quad&i=m,
\end{cases}\quad 
\nabla_y^-u_{i,j}=\begin{cases}
u_{i,1}\quad&j=1,\\
u_{i,j}-u_{i,j-1}\quad&1< j\le n,\\
-u_{i,n-1}\quad&j=n.
\end{cases}
\end{equation}
If choosing $\nabla^{+}$ as the gradient and its adjoint as the divergence operator, we  get 
\begin{equation}\label{eq:rb:fd}
\gamma_{i,j} u_{i,j}- \left(\nabla_x^{-}(d^1_{i,j}\nabla_x^{+} u_{i,j}) +\nabla_y^{-} (d^2_{i,j}\nabla_y^{+} u_{i,j})\right) =z_{i,j}.
\end{equation}
Written \eqref{eq:rb:fd} in detail with discrete $\nabla^{+}$ and $\nabla^{-}$, taking the interior points for example, we have 
\begin{equation*}
\begin{aligned}
&\big(\gamma_{i,j}+(d_{i,j}^1+d_{i-1,j}^1+d_{i,j}^2+d_{i,j-1}^2)\big)u_{i,j}-\big(d_{i,j}^1 u_{i+1,j}+d_{i-1,j}^1u_{i-1,j}+d_{i,j}^2u_{i,j+1}\\
&+d_{i,j-1}^2u_{i,j-1}\big)=z_{i,j}.
\end{aligned}
\end{equation*}
For the boundary and corner points,  we can also get the corresponding updates and see Tables \ref{tab:laplace-stencil} and \ref{table:fd} for more details. We call this discretization scheme as the normal formula of finite difference (shorten as NFFD).

There is another symmetric finite difference approximation of \eqref{equation:discrete} \cite{WE} (see section 3.4 of \cite{WE}) with the following additional $\widetilde{\nabla}_x^{-},\widetilde{\nabla}_x^{+},\widetilde{\nabla}_y^{-},\widetilde{\nabla}_y^{+}$ defined by 
\begin{equation}
\widetilde{\nabla}_x^-u_{i,j}=\begin{cases}
0\quad&i=1,\\
u_{i,j}-u_{i-1,j}\quad&1<i\le m,
\end{cases}\quad 
\widetilde{\nabla}_y^-u_{i,j}=\begin{cases}
0\quad&j=1,\\
u_{i,j}-u_{i,j-1}\quad&1<j\le n,
\end{cases}
\end{equation}
and 
\begin{equation}
\widetilde{\nabla}_x^+u_{i,j}=\begin{cases}
u_{2,j}\quad&i=1,\\
u_{i+1,j}-u_{i,j}\quad&1< i\le m,\\
-u_{m,j}\quad&i=m,
\end{cases}\quad 
\widetilde{\nabla}_y^+u_{i,j}=\begin{cases}
u_{i,2}\quad&j=1,\\
u_{i,j+1}-u_{i,j}\quad&1< j\le n,\\
-u_{i,n}\quad&j=n.
\end{cases}
\end{equation}
The symmetric finite difference approximation of \eqref{equation:discrete} \cite{WE} reads as follows (see section 3.4 of \cite{WE})
\begin{equation}\label{eq:sfd}
\begin{aligned}
&\gamma_{i,j} u_{i,j}-\frac{1}{2}\big(\nabla_x^{-}(d^1_{i,j}\nabla_x^{+} u_{i,j})+\widetilde{\nabla}_x^{+} (d^1_{i,j}\widetilde{\nabla}_x^{-} u_{i,j})+\nabla_y^{-}(d^2_{i,j}\nabla_y^{+} u_{i,j}) \\
&+\widetilde{\nabla}_y^{+} (d^2_{i,j}\widetilde{\nabla}_y^{-} u_{i,j})\big) =z_{i,j},
\end{aligned}
\end{equation}
Written out in full and for the interior points, we obtain 
\begin{equation}\label{eq:fd_old_full}
\begin{aligned}
&\big(\gamma_{i,j}+(\alpha_{i,j}^1+\alpha_{i-1,j}^1+\alpha_{i,j}^2+\alpha_{i,j-1}^2)\big)u_{i,j}-\big(\alpha_{i,j}^1u_{i+1,j}+\alpha_{i-1,j}^1u_{i-1,j}\\
&+\alpha_{i,j}^2u_{i,j+1}+\alpha_{i,j-1}^2u_{i,j-1}\big)=z_{i,j},
\end{aligned}
\end{equation}
where 
$\alpha_{i, j}^1 = \frac12(d_{i,j}^1+d_{i+1,j}^1)$, $\alpha_{i,j}^2=\frac12(d_{i,j}^2+d_{i,j+1}^2) $ and  see the Table \ref{table:sfd}  for more details. We call this discretization scheme as the symmetric formula of finite difference (shorten as SFFD). Here we mainly focus on the NFFD and SFFD for these nonhomogeneous  elliptic equations in divergence form.  

With the stencils in Tables \ref{table:fd}, \ref{table:sfd} and the corresponding locations in Table \ref{tab:laplace-stencil}, one can do the classical symmetric Gauss-Seidel including the symmetric red-black Gauss-Seidel iteration (shorten as SRBGS henceforth) conveniently \cite{BS2,SA}.  To do SRBGS iteratons, one should mark all points as red or black points first. For example, let  $\Omega_{red}: =\{(i,j) \in \Omega | i+j \ \text{even}  \}$ and $\Omega_{black}: =\{(i,j) \in \Omega | i+j \ \text{odd}  \}$. Then one can first update all the red points with the five-point stencils in Table \ref{table:fd} or \ref{table:sfd} using all the black neighbor points followed by updating all the black points with the five-point stencils in Table \ref{table:fd} or \ref{table:sfd} with the updated red neighbor points. Finally, update all the red points again. Then the one cycle Gauss-Seidel iterations with updating order ``red $\rightarrow $ black $\rightarrow $ red" is finished and it is called one time SRBGS iteration. For more details of the SRBGS, we refer to \cite{BS1} (section 4.1.3). Henceforth, we will use the following notation to denote the $n$ times SRBGS iterations for the equation \eqref{equation:discrete}
with initial iteration value $u^0$
\[
\text{SRBGS}(\gamma(x), d^1, d^2, z, u^0, n),
\]
which can be seen as one n-folds SRBGS preconditioned iteration according to \eqref{eq:preconditioner:multiple} for \eqref{equation:discrete}. Due to the positive definiteness requirement of $M_k$, taking the $k$-th nonlinear preconditioned alternating minimization of the anisotropic $L_{HL}$ model \eqref{eq:hl:b:ani} for example, we actually do the following preconditioned iteration with tiny positive constant $\eta$ and initial iterative value $u^k$
\begin{equation}
u^{k+1}:  =  \text{SRBGS}(1.0 + \eta, b_1^{k}, b_2^{k}, z^k + \eta u^k, u^k, n),
\end{equation}
according to \eqref{eq:pre:iterations:tiny} in Remark \ref{rem:pre:gs}.

Actually, the discretization schemes \eqref{eq:rb:fd} and \eqref{eq:sfd} are both consistent with the Neumann boundary condition as in \eqref{equation:discrete} 
by the following Remark \ref{rem:neumann}. Furthermore, it can be readily checked that while $b_1$ and $b_2$ are both constants, the schemes \eqref{eq:rb:fd} and \eqref{eq:sfd} are equivalent; see the following Remark \ref{rem:constant:same}.
\begin{remark}\label{rem:neumann}
	We  point out that \eqref{eq:rb:fd} and \eqref{eq:sfd} are both with the Neumann boundary condition $\frac{\partial u}{\partial \nu}=0$, where $\nu$ is the outward pointing unit normal vector of $\Omega$.  For example, for the $\mS_{S}$ points in Table \ref{tab:laplace-stencil}, we have $u_{m+1,j}=u_{m,j},1<j<n$. Moreover, with the notation $\Lambda_{m.j}:=\alpha_{m-1,j}^1+\alpha_{m,j}^2+\alpha_{m,j-1}^2$ for convenience,  we have 
	\begin{equation}\label{sfd:ss:full}
	(\gamma_{i,j}+\Lambda_{m,j})u_{m,j}-(\alpha_{m-1,j}^1u_{m-1,j}+\alpha_{m,j}^2u_{m,j+1}+\alpha_{m,j-1}^2u_{m,j-1})=z_{m,j}
	\end{equation}
	in \eqref{eq:fd_old_full} and for the left item in \eqref{eq:sfd}, we have 
	\begin{equation*}
	\begin{aligned}
	&\gamma_{i,j} u_{m,j}-\frac{1}{2}[-d_{m-1,j}^1\nabla_x^+ u_{m-1,j}-d^1_{m,j}\widetilde{\nabla}_x^-u_{m,j}+(d^2_{m,j}\nabla^+_yu_{m,j}-d^2_{m,j-1}\nabla^+_yu_{m,j-1})\\
	&+(d^2_{m,j+1}\widetilde{\nabla}^-_y u_{m,j+1}-d^2_{m,j}\widetilde{\nabla}^-_y u_{m,j})]\\
	&=\gamma_{i,j} u_{m,j}-\frac{1}{2}[-(d^1_{m-1,j}(u_{m,j}-u_{m-1,j}))+(-d^1_{m,j}(u_{m,j}-u_{m-1,j}))+
	(d^2_{m,j}(u_{m,j+1}\\
	&-u_{m,j})-d^2_{m,j-1}(u_{m,j}-u_{m,j-1}))
	+(d^2_{m,j+1}(u_{m,j+1}-u_{m,j})+d^2_{m,j}(u_{m,j}-u_{m,j-1}))]\\
	&=(\gamma_{i,j}+\Lambda_{m,j})u_{m,j}-(\alpha_{m-1,j}^1u_{m-1,j}+\alpha_{m,j}^2u_{m,j+1}+\alpha_{m,j-1}^2u_{m,j-1}),
	\end{aligned}
	\end{equation*}
	which is equal to the left-hand side of \eqref{sfd:ss:full}.
\end{remark}

\begin{remark}\label{rem:constant:same}
	\eqref{eq:rb:fd} and \eqref{eq:sfd} are the same when $d^1$ and $d^2$ both are constants. 
\end{remark}


\begin{table}
	\centering
	\begin{minipage}{0.34\linewidth}
		\begin{tikzpicture}[thick,scale=0.95]
		\draw[<->] (-0.5,3.5) node[below] {$i$}-- (-0.5,4.5)
		-- (0.5,4.5) node[right] {$j$};
		\draw[step=.5cm,black!25!white,dashed] (0,0) grid (4,4);
		\draw[black] (0,0.75) -- (0,0) -- (0.75,0)
		(1.25,0) -- (2.75,0)
		(0,0.5) -- (0.75,0.5)
		(1.25,0.5) -- (2.75,0.5)
		(0.5,0) -- (0.5,0.75)
		(3.25,0) -- (4,0) -- (4,0.75)
		(3.25,0.5) -- (4,0.5)
		(3.5,0) -- (3.5,0.75)
		(1.5,0) -- (1.5,0.75)
		(2,0) -- (2,0.75)
		(2.5,0) -- (2.5,0.75)
		
		(0,3.25) -- (0,4) -- (0.75,4)
		(1.25,4) -- (2.75,4)
		(0,3.5) -- (0.75,3.5)
		(1.25,3.5) -- (2.75,3.5)
		(0.5,4) -- (0.5,3.25)
		(3.25,4) -- (4,4) -- (4,3.25)
		(3.25,3.5) -- (4,3.5)
		(3.5,4) -- (3.5,3.25)
		(1.5,4) -- (1.5,3.25)
		(2,4) -- (2,3.25)
		(2.5,4) -- (2.5,3.25)
		
		(0,1.5) -- (0.75,1.5)
		(1.25,1.5) -- (2.75,1.5)
		(3.25,1.5) -- (4,1.5)
		(0,2) -- (0.75,2)
		(1.25,2) -- (2.75,2)
		(3.25,2) -- (4,2)
		(0,2.5) -- (0.75,2.5)
		(1.25,2.5) -- (2.75,2.5)
		(3.25,2.5) -- (4,2.5)
		
		(0,1.25) -- (0,2.75)
		(0.5,1.25) -- (0.5,2.75)
		(1.5,1.25) -- (1.5,2.75)
		(2,1.25) -- (2,2.75)
		(2.5,1.25) -- (2.5,2.75)
		(3.5,1.25) -- (3.5,2.75)
		(4,1.25) -- (4,2.75);
		
		\draw[fill,red] (0,0) circle [radius=0.05]
		node[below, red] {$\scriptstyle \mS_{SW}$};
		\draw[fill,red] (2,0) circle [radius=0.05]
		node[below, red] {$\scriptstyle \mS_{S}$};
		\draw[fill,red] (4,0) circle [radius=0.05]
		node[below, red] {$\scriptstyle \mS_{SE}$};
		\draw[fill,red] (0,2) circle [radius=0.05]
		node[left, red] {$\scriptstyle \mS_{W}$};
		\draw[fill,red] (2,2) circle [radius=0.05]
		node[below right, red] {$\scriptstyle \mS_{O}$};
		\draw[fill,red] (4,2) circle [radius=0.05]
		node[right, red] {$\scriptstyle \mS_{E}$};
		\draw[fill,red] (0,4) circle [radius=0.05]
		node[above, red] {$\scriptstyle \mS_{NW}$};
		\draw[fill,red] (2,4) circle [radius=0.05]
		node[above, red] {$\scriptstyle \mS_{N}$};
		\draw[fill,red] (4,4) circle [radius=0.05]
		node[above, red] {$\scriptstyle \mS_{NE}$};
		\end{tikzpicture}
	\end{minipage}\quad

	\caption{Finite-difference stencils for \eqref{equation:discrete}
		with homogeneous Neumann boundary conditions.
		The highlighted entry
		denotes the center element and $u$ is assumed to be
		extended by arbitrary values outside $\Omega$.}
	\label{tab:laplace-stencil}
\end{table}

\begin{table}
	\begin{tabular}{c@{\ \ }c@{\ \ }c}
		$\underbrace{\begin{bmatrix}
			\highlight{\gamma_{i,j} + \Sigma_{1,1}} &  d_{1,1}^2 \\
			- d_{1,1}^1 & 
			\end{bmatrix}}_{=\mS_{NW}}$
		&
		$\underbrace{\begin{bmatrix}
			- d_{1,j-1}^2 & \highlight{\gamma_{i,j} + \Sigma_{1,j}} & - d_{1,j}^2 \\
			& - d_{1,j}^1 & 
			\end{bmatrix}}_{=\mS_{N}}$
		&
		$\underbrace{\begin{bmatrix}
			- d_{1,n-1}^2 & \highlight{\gamma_{i,j} + \Sigma_{1,n}} 
			\\
			& - d_{1,n}^1 
			\end{bmatrix}}_{=\mS_{NE}}$
		\\
		\\[-0.75em]
		$\underbrace{\begin{bmatrix}
			- d_{i-1,1}^1 & 
			\\
			\highlight{\gamma_{i,j} + \mu\Sigma_{i,1}} & - d_{i,1}^2 \\
			- d_{i,1}^1 & 
			\end{bmatrix}}_{=\mS_{W}}$
		&
		$\underbrace{\begin{bmatrix}
			& - d_{i-1,j}^1 &
			\\
			- d_{i,j-1}^2 & \highlight{\gamma_{i,j} + \Sigma_{i,j}} & - d_{i,j}^2 \\
			& - d_{i,j}^1 &
			\end{bmatrix}}_{=\mS_{O}}$
		&
		$\underbrace{\begin{bmatrix}
			& - d_{i-1,n}^1 & 
			\\
			- d_{i,n-1}^2 & \highlight{\gamma_{i,j} + \Sigma_{i,n}} & 
			\\
			& - d_{i,n}^1 & 
			\end{bmatrix}}_{=\mS_{E}}$
		\\
		\\[-0.75em]
		$\underbrace{\begin{bmatrix}
			& - d_{m-1,1}^1 & 
			\\
			& \highlight{\gamma_{i,j} + \Sigma_{m,1}} & - d_{m,1}^2 
			\end{bmatrix}}_{=\mS_{SW}}$
		&
		$\underbrace{\begin{bmatrix}
			& - d_{m-1,j}^1 & 
			\\
			- d_{m,j-1}^2 & \highlight{\gamma_{i,j} + \Sigma_{m,j}} & - d_{m,j}^2 
			\end{bmatrix}}_{=\mS_{S}}$
		&
		$\underbrace{\begin{bmatrix}
			& - d_{m-1,n}^1 & 
			\\
			- d_{m,n-1}^2 & \highlight{\gamma_{i,j} + \Sigma_{m,n}} & 
			\end{bmatrix}}_{=\mS_{SE}}$
	\end{tabular}
	\caption{The five-point stencils for NFFD discretization scheme \eqref{eq:rb:fd}. $\Sigma_{i,j}:=d_{i,j-1}^2+d_{i,j}^2+d_{i-1,j}^1+d_{i,j}^1$ for the interior points.  On the corner, $\Sigma_{1,1}:=d_{1,1}^2+d_{1,1}^1$ and other points on the corner are similar. For the interior boundary points, $\Sigma_{i,1}:=d_{i-1,1}^1+d_{i,1}^1+d_{i,1}^2$ and other points on the interior boundary are similar.
	}
	\label{table:fd}
\end{table}

\begin{table}
	\begin{tabular}{c@{\ \ }c@{\ \ }c}
		$\underbrace{\begin{bmatrix}
			\highlight{\gamma_{i,j} + \Sigma_{1,1}} & -\alpha_{1,1}^2 \\
			-\alpha_{1,1}^1 & 
			\end{bmatrix}}_{=\mS_{NW}}$
		&
		$\underbrace{\begin{bmatrix}
			-\alpha_{1,j-1}^2 & \highlight{\gamma_{i,j} + \Sigma_{1,j}} & -\alpha_{1,j}^2 \\
			& -\alpha_{1,j}^1 & 
			\end{bmatrix}}_{=\mS_{N}}$
		&
		$\underbrace{\begin{bmatrix}
			-\alpha_{1,n-1}^2 & \highlight{\gamma_{i,j} + \Sigma_{1,n}} 
			\\
			& -\alpha_{1,n}^1 
			\end{bmatrix}}_{=\mS_{NE}}$
		\\
		\\[-0.75em]
		$\underbrace{\begin{bmatrix}
			-\alpha_{i-1,1}^1 & 
			\\
			\highlight{\gamma_{i,j} + \Sigma_{i,1}} & -\alpha_{i,1}^2 \\
			-\alpha_{i,1}^1 & 
			\end{bmatrix}}_{=\mS_{W}}$
		&
		$\underbrace{\begin{bmatrix}
			& -\alpha_{i-1,j}^1 &
			\\
			-\alpha_{i,j-1}^2 & \highlight{\gamma_{i,j} + \Sigma_{i,j}} & -\alpha_{i,j}^2 \\
			& -\alpha_{i,j}^1 &
			\end{bmatrix}}_{=\mS_{O}}$
		&
		$\underbrace{\begin{bmatrix}
			& -\alpha_{i-1,n}^1 & 
			\\
			-\alpha_{i,n-1}^2 & \highlight{\lambda + \Sigma_{i,n}} & 
			\\
			& -\alpha_{i,n}^1 & 
			\end{bmatrix}}_{=\mS_{E}}$
		\\
		\\[-0.75em]
		$\underbrace{\begin{bmatrix}
			& -\alpha_{m-1,1}^1 & 
			\\
			& \highlight{\gamma_{i,j} + \Sigma_{m,1}} & -\alpha_{m,1}^2 
			\end{bmatrix}}_{=\mS_{SW}}$
		&
		$\underbrace{\begin{bmatrix}
			& -\alpha_{m-1,j}^1 & 
			\\
			-\alpha_{m,j-1}^2 & \highlight{\gamma_{i,j} + \Sigma_{m,j}} & -\alpha_{m,j}^2 
			\end{bmatrix}}_{=\mS_{S}}$
		&
		$\underbrace{\begin{bmatrix}
			& -\alpha_{m-1,n}^1 & 
			\\
			-\alpha_{m,n-1}^2 & \highlight{\gamma_{i,j} + \Sigma_{m,n}} & 
			\end{bmatrix}}_{=\mS_{SE}}$
	\end{tabular}
	\caption{The five-point stencils for SFFD scheme \eqref{eq:sfd}. Here $\alpha_{i,j}^1=\frac12(d_{i,j}^1+d_{i+1,j}^1)$ with $1\le i<m,1\le j\le n$, $\alpha_{i,j}^2=\frac12(d_{i,j}^2+d_{i,j+1}^2)$ with $1\le i\le m,1\le j<n$ and $\Sigma_{i,j}:=\alpha_{i,j-1}^2+\alpha_{i,j}^2+\alpha_{i-1,j}^1+\alpha_{i,j}^1$ on the interior points. On the corner, $\Sigma_{1,1}:=\alpha_{1,1}^2+\alpha_{1,1}^1$ and other points on the corner are similar. On the interior boundary, $\Sigma_{i,1}:=\alpha_{i-1,1}^1+\alpha_{i,1}^1+\alpha_{i,1}^2$ and other points on the interior boundary are similar.
	}\label{table:sfd}
\end{table}

\begin{table}
	\centering
	\begin{tabular}{p{0.15\linewidth}r@{\ }p{0.65\linewidth}}
		\toprule
		\multicolumn{3}{l}{\textbf{Preconditioned alternating minimization for $L_{GY}$ model}}
		\hfill%
		
		\hfill\mbox{}
		\\
		\midrule
		Initialization: &
		\multicolumn{2}{l}{%
			$(u^0, l^0)$, $\lambda>0$, $\mu>0$,  $0<\bar \eta \leq \eta^{+}$, $N_k\equiv \kappa I$}  \\
		&	\multicolumn{2}{l}{with $0< \kappa \leq \kappa^{+}$,  }
		\\
		&
		\multicolumn{2}{l}{%
			$n \geq 1$ inner iterations for SRBGS}
		\\[\medskipamount]
		Iteration: &
		$z^k$ & $ =u_0-\mu\Div l^k $\\
		&$u^{k+1}$ & $=\text{SRBGS}(1.0 + \eta, \mu, \mu, z^k + \eta u^k, u^k, n) $ \hfill 
		\\[\smallskipamount]
		& $l^{k+1}$  & =
		\eqref{GY:l_update} \ for the isotropic or anisotropic case\\
		\bottomrule
	\end{tabular}
	\caption{Preconditioned alternating minimization for the isotropic or anisotropic Geman-Yang model $L_{GY}$ \eqref{eq:L;truncated:qua} or $L_{GY}^A$  \eqref{eq:L;truncated:qua:ani}. The SRBGS iterations are used as preconditioner for dealing with the equations of constant coefficient  for updating $u^{k+1}$.  }
	\label{tab:GY_update}
\end{table}
\begin{table}
	\centering
	\begin{tabular}{p{0.15\linewidth}r@{\ }p{0.65\linewidth}}
		\toprule
		\multicolumn{3}{l}{\textbf{Preconditioned alternating minimization for $L_{GM}$, $L_{GR}$ or $L_{HL}$ model}\ \  }
		\hfill%
		\hfill\mbox{}
		\\
		\midrule
		
		Initialization: &
		\multicolumn{2}{l}{%
			$(u^0, b^0)$ with $0<b^0\le1$, 	$\lambda>0$, $\mu>0$,  $0< \eta \leq  \eta^{+}$,  }
		\\
		&
		\multicolumn{2}{l}{%
			$N_k\equiv \lambda I/2$ for $L_{GR}$,  \ $N_k\equiv \mu I/2 $ for $L_{GM}$ and $L_{HL}$,}			
		\\
		&
		\multicolumn{2}{l}{%
			$n \geq 1$ inner iterations for SRBGS, }
		\\[\medskipamount]
		Iteration: &$u^{k+1}$&$ =\begin{cases}
		\text{for $L_{GR}$ case:}\\
		\text{SRBGS}(1.0 + \eta, \mu d^{1k}, \mu d^{2k}, u_0 + \eta u^k, u^k, n) \quad \\ 
		\text{for $L_{GM}$ or $L_{HL}$ case:}\\
		\text{SRBGS}(1.0 + \eta, \frac{\lambda}{\mu}d^{2k}, \frac{\lambda}{\mu}d^{1k}, u_0 + \eta u^k, u^k, n) \quad
		\end{cases} $ \hfill 
		\\[\smallskipamount]
		&$b^{k+1}$&$=\begin{cases}
		\eqref{eq:proj:p} \ \text{for $L_{GR}$ case}\\
		\eqref{eq:update:b:gm} \ \text{for $L_{GM}$ case} \\
		\eqref{eq:update:b:hl} \ \text{for $L_{HL}$ case}
		\end{cases}$ \\[\smallskipamount]
		\bottomrule
	\end{tabular}
	\caption{Preconditioned alternating minimization for the isotropic or anisotropic Geman-McClure $L_{GM}$ models \eqref{eq:gm:b} or \eqref{eq:gm:b:ani},  the Geman-Reynolds models $L_{GR}$ \eqref{eq:b;truncated:qua} and \eqref{eq:b;truncated:qua:ani} and the Hebert-Leahy models $L_{HL}$ \eqref{eq:hl:b} and \eqref{eq:hl:b:ani}. The SRBGS iterations are used as preconditioner for dealing with  equations of varying coefficients for updating $u^{k+1}$.}
	\label{tab:GM_update}
\end{table}
\begin{table}
	\centering
	\begin{tabular}{p{0.15\linewidth}r@{\ }p{0.65\linewidth}}
		\toprule
		
		\multicolumn{3}{l}{\textbf{Preconditioned alternating minimization for $L_{MS}$ model}\ \  }
		\hfill%
		\hfill\mbox{}
		\\
		\midrule
		
		Initialization: &
		\multicolumn{2}{l}{%
			$(u^0, s^0)$, \  	$\lambda>0$, $\mu>0$, $\epsilon>0$, \ $0 < \eta \leq \eta^{+}$,\ $0 < \gamma \leq \gamma^{+}$,}
		\\
		&
		\multicolumn{2}{l}{%
			$n \geq 1$ inner iterations for symmetric Gauss-Seidel}
		\\[\medskipamount]
		Iteration: &
		$u^{k+1}$ & $= \text{SRBGS}(1.0 + \eta,2\alpha (s^k)^2, 2\alpha (s^k)^2, u_0 + \eta u^k, u^k, n)$ \hfill 
		\\[\smallskipamount]
		& $s^{k+1}$&$=\text{SRBGS}(2\alpha|\nabla u^k|^2+\frac{\lambda}{2\epsilon}+ \gamma, 2\lambda\epsilon,2\lambda\epsilon,\frac{\lambda}{2\epsilon} + \gamma s^k ,s^k, n) $ \\[\smallskipamount]
		\bottomrule
	\end{tabular}
	\caption{Preconditioned alternating minimization  the Ambrosio–Tortorelli approximated Mumford-Shah model $L_{MS}$ \eqref{eq:ms:b}. The preconditioned iterations are used both for dealing with equations of varying coefficients for updating $u^{k+1}$ and $s^{k+1}$.}
	\label{tab:MS_update}
\end{table}
\subsection{Numerical tests: image denoising and segmentation}
In this section, we present the detailed performance of these models. All experiments are performed in Matlab 2019a on a 64-bit PC with an Inter(R) Core(TM) i5-9300HQ CPU (2.40Hz) and 12 GB of RAM. It can be seen from Table \ref{tab:comparison:ani:psnr} that the anisotropic $L_{GM}$, $L_{GR}$, $L_{HL}$, and $L_{GY}$ models are competitive compared to the widely used TV (total variation) model. Especially, the  $L_{HL}$ model with the SFFD has some improvement compared with TV. Furthermore, for most of the images tested, the SFFD performs better than the NFFD. Generally, with SFFD, one can get better PSNR according to Table \ref{tab:comparison:ani:psnr}.

Figure \ref{fig:my_label} shows the effectiveness for image denoising and segmentation of the models discussed. It can be seen that (i.e., images (a)-(l) of Figure \ref{fig:my_label}) all these nonconvex regularizations including $L_{GM}$, $L_{GR}$, $L_{HL}$, and $L_{GY}$ do not have stair-casing effect as TV regularization, no matter for the isotropic or anisotropic case. The segmentation of the well-known $L_{MS}$ model is very appealing with our preconditioning technique; see images (m)-(p) of Figure \ref{fig:my_label}.

Moreover, Figure \ref{HL:pre&nonpre} shows our motivation for introducing the preconditioning. Let's take the $L_{HL}$ model for example. Other models including $L_{GM}$, $L_{GR}$, $L_{MS}$,  and $L_{GY}$ are similar to our observations and we omit the comparisons for compactness. First, it can be seen that one can benefit by getting more lower energy and better PSNR from the proximal terms in \eqref{eq:general:al:min} by solving with CG (conjugate gradient) directly compared to solving the original systems \eqref{eq:alter:mini:orig} without any proximal terms. Furthermore, with the efficient SRBGS preconditioners, the nonlinear iterations  converge much faster while obtaining the high PSNR and low energy more quickly compared to solving the linear system with proximal terms by CG. Although CG solver with low accuracy is also fast, however, there is no convergence guarantee for the whole nonlinear iterations with CG solver. The proposed preconditioned framework can obtain lower energy and better PSNR faster with a convergence guarantee, which is very promising. 

Furthermore, experimentally,  we observed that all the energy functions of  $L_{GM}$, $L_{GR}$, $L_{HL}$, $L_{GY}$, and $L_{GM}$ are decreasing monotonously, which are consistent with the proximal or preconditioned alternating minimization framework \eqref{eq:general:al:min}. We select some representatives as in image (b) of Figure \ref{fig:convergence and energy} for compactness. 

Moreover, for the local linear convergence rate of $L_{GY}^A$ whose KL exponent can be proved to be $1/2$ in Lemma \ref{lem:ani:GY:KL}, the numerical test also shows the asymptotic linear convergence rate as in image (a) of Figure \ref{fig:convergence and energy}, which is also shown theoretically in Corollary \ref{cor:linear}.

\begin{table}
	\centering
	\caption{Comparison for the anisotropic image denoising models with PSNR (peak signal to noise ratio). The noisy images are as follows: Lena1, Lena2 with size 512 $\times$512 and Monarch1, Monarch2 with size 768 $\times$ 512. The usual zero mean Gaussian white noise of variance $\sigma = 0.1$ for Lena1 or Monarch1 and $\sigma = 0.05$ for Lena2 and Monarch2. The parameters for the corresponding models are as follows. For $L_{GM}$ model, we choose $\mu=0.02$, $\lambda=0.05$ for $\sigma = 0.1$ case and $\mu=0.007$, $\lambda=0.004$ for $\sigma = 0.05$ case. For $L_{GR}$ and $L_{GY}$ models, we choose $\mu=3$, $\lambda=0.01$ for $\sigma = 0.1$ case and $\mu=1.5$, $\lambda=0.05$ for $\sigma = 0.05$ case. For $L_{HL}$ model, we choose $\mu=0.005$, $\lambda=0.001$ for $\sigma = 0.1$ case and $\mu=0.002$, $\lambda=0.0005$ for $\sigma = 0.05$ case. For the anisotropic TV model, the regularization parameter $\alpha$ is chosen as the variance of the noise, i.e., $\alpha=\sigma$. The models along with the finite difference schemes that obtained the best PSNR are highlighted in bold face. 
	}
	\begin{tabular}{|c|c|c|c|c|c|c|c|c|}
		\hline
		\multirow{2}{*}{ }	& \multicolumn{2}{c}{$L_{GM}$} & \multicolumn{2}{|c|}{ $L_{GR}$} & \multicolumn{2}{c|}{ $L_{HL}$} & 
		\multicolumn{1}{c|}{ TV} & \multicolumn{1}{c|}{ $L_{GY}$} \\
		\hline 
		& NFFD & SFFD  & NFFD & SFFD  & NFFD & SFFD  & * & * \\
		\hline		Lena1 &29.17 & 29.57& 28.41&29.23 &29.23 &$\bds{30.02}$ & 29.22& 29.31 \\
		\hline
		Monarch1  &28.93 &29.75& 28.48&29.49 &29.31 & $\bds{30.29}$ & 29.14& 29.62\\
		\hline
		Lena2 &32.33 &$\bds{32.94}$ &32.26 &32.47 & 32.47& 32.90& 31.85& 32.36\\
		\hline
		Monarch2  &32.90 & 33.71 &32.91 &33.32 & 33.25&$\bds{33.94}$ & 32.61&33.18 \\
		\hline
	\end{tabular}
	\label{tab:comparison:ani:psnr}
\end{table}

\begin{figure}
	\centering
	\subfloat[Monarch-noise01]{
		\includegraphics[width=0.23\textwidth]{./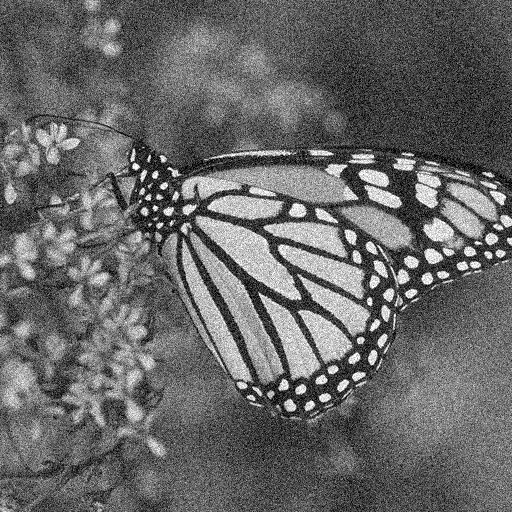}\ \ 
	}
	\subfloat[Monarch-noise005]
	{\includegraphics[width=0.23\textwidth]{./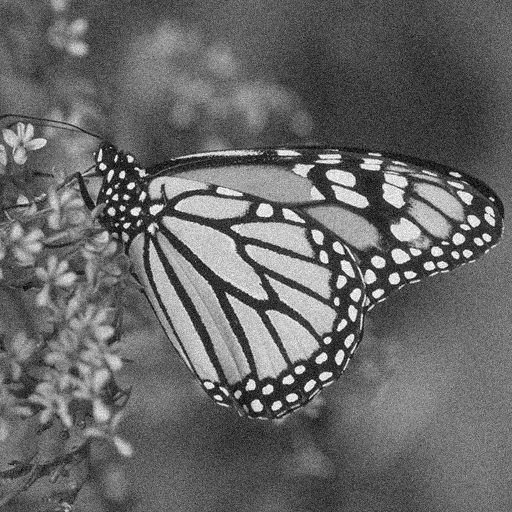}}\ \ 
	\subfloat[Lena-noise01]
	{\includegraphics[width=0.23\textwidth]{./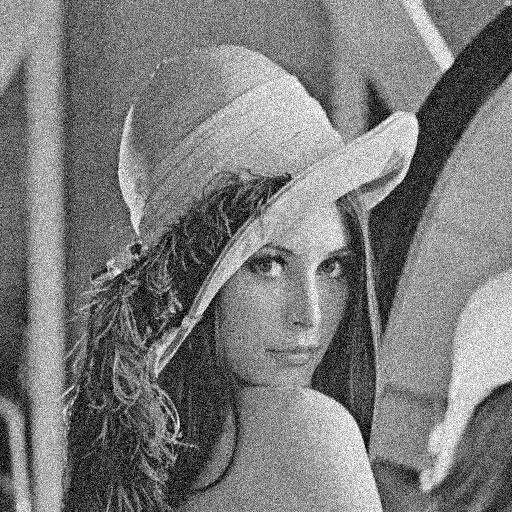}}\ \  
	\subfloat[Lena-noise005]
	{\includegraphics[width=0.23\textwidth]{./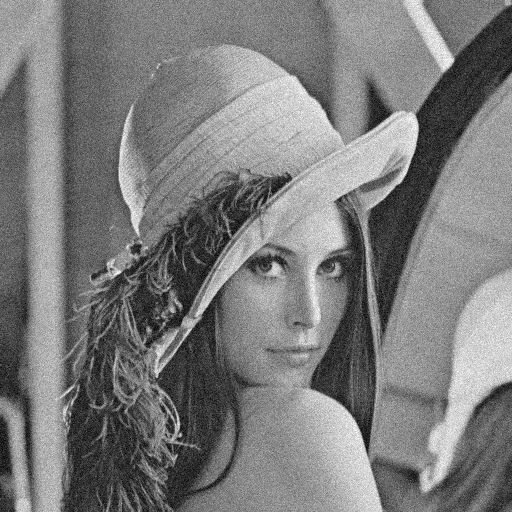}}\\
	\subfloat[Monarch-ani-GM-01]
	{\includegraphics[width=0.23\textwidth]{./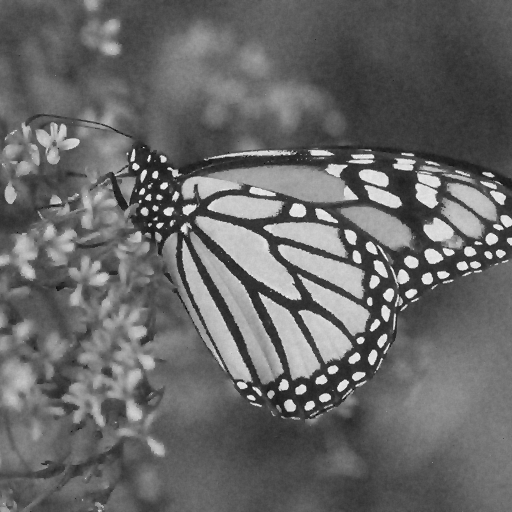}}\ \ 
	\subfloat[Monarch-ani-GR-005]
	{\includegraphics[width=0.23\textwidth]{./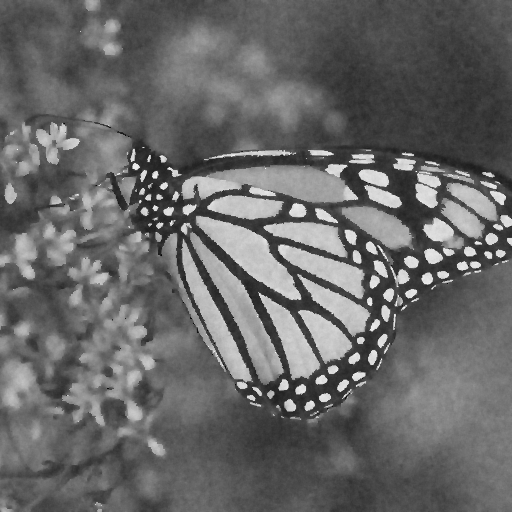}}\ \ 
	\subfloat[Lena-ani-HL-01]
	{\includegraphics[width=0.23\textwidth]{./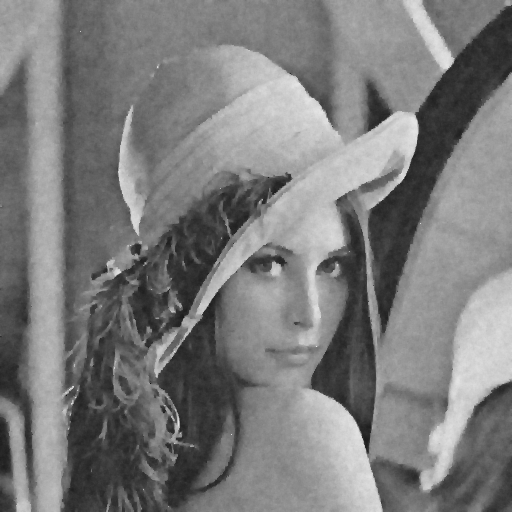}}\ \
	\subfloat[Lena-ani-GY-005]
	{\includegraphics[width=0.23\textwidth]{./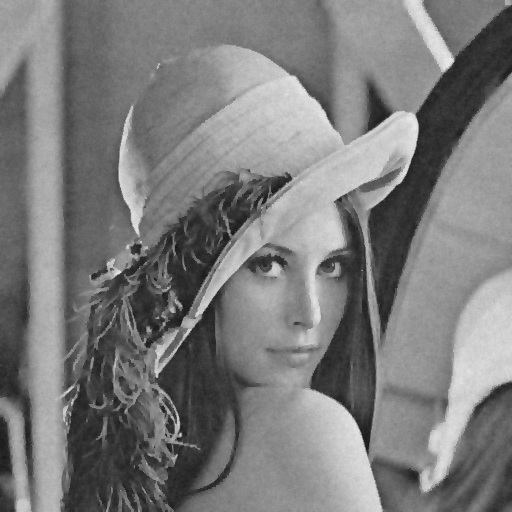}}\\ 
	\subfloat[Monarch-iso-GM-01]
	{\includegraphics[width=0.23\textwidth]{./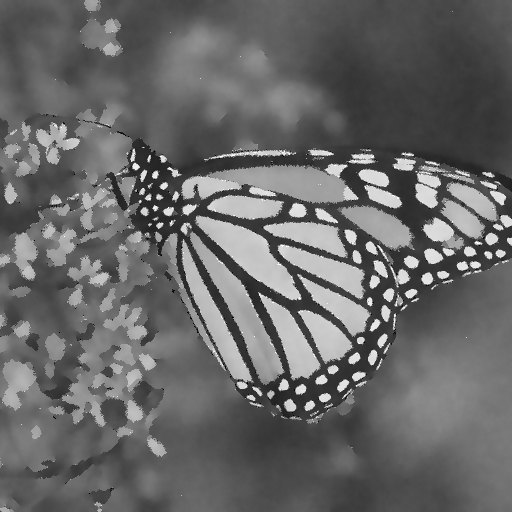}}\ \ 
	\subfloat[Monarch-iso-GR-005]
	{\includegraphics[width=0.23\textwidth]{./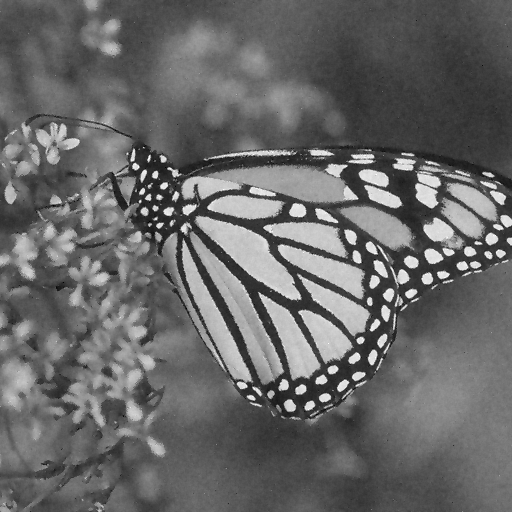}}\ \ 
	\subfloat[Lena-iso-HL-01]
	{\includegraphics[width=0.23\textwidth]{./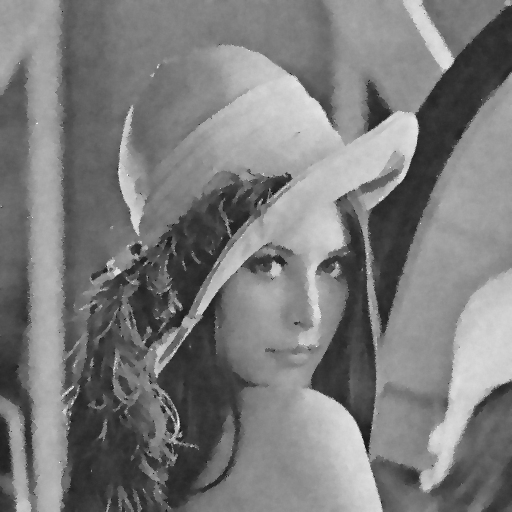}}\ \ 
	\subfloat[Lena-iso-GY-005]
	{\includegraphics[width=0.23\textwidth]{./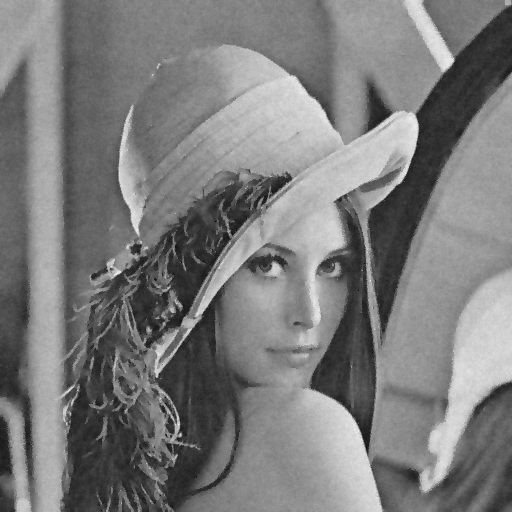}} \\
	\subfloat[Man]
	{\includegraphics[width=0.23\textwidth]{./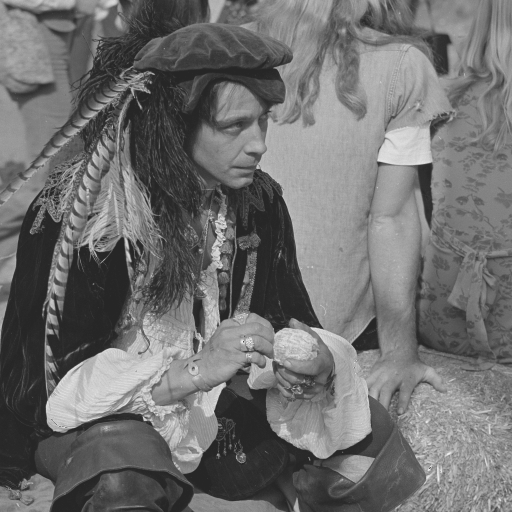}}\ \
	\subfloat[Man-segmentation]
	{\includegraphics[width=0.23\textwidth]{./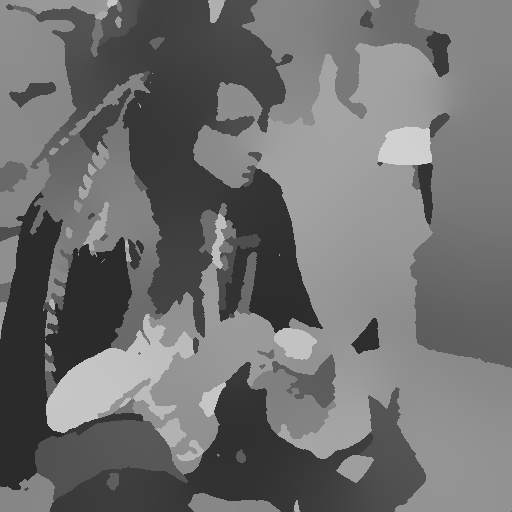}}\ \
	\subfloat[Tulips]
	{\includegraphics[width=0.23\textwidth]{./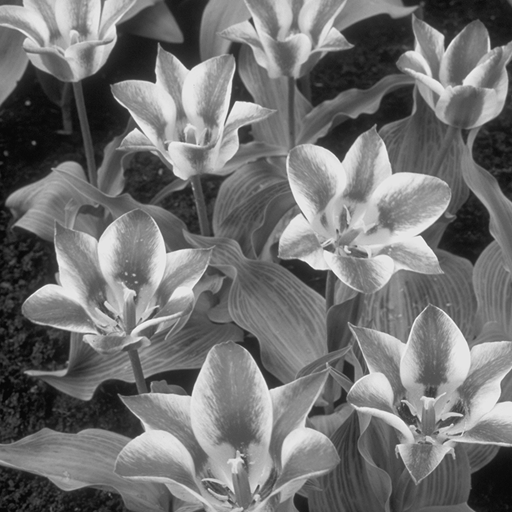}}\ \
	\subfloat[Tulips-segmentation]
	{\includegraphics[width=0.23\textwidth]{./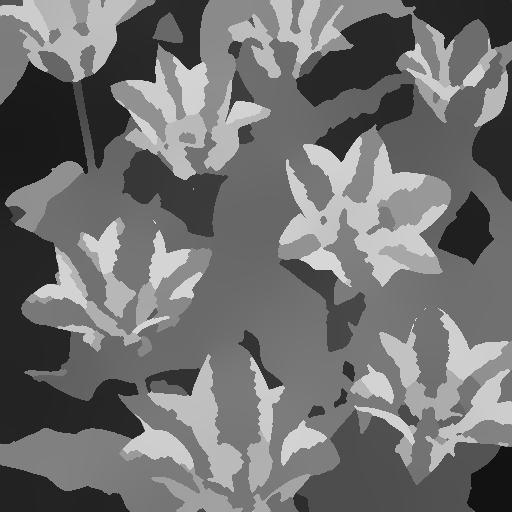}}
	\caption{Image (a) shows the noisy Monarch image of size $512\times512$ corrupted by Gaussian noise of variance $\sigma=0.1$. Image (e) and (i) show the  denoised images of (a) by the anisotropic $L_{GM}$ model with parameters $\mu=0.02$, $\lambda=0.05$ and the isotropic $L_{GM}$ model with parameters $\mu=0.02$, $\lambda=0.001$ correspondingly. Image (b) shows the Monarch noisy image of size $512\times512$ corrupted by Gaussian noise of variance $\sigma=0.0.5$. Image (f) and (j) show the  denoised images of (b) by the anisotropic $L_{GR}$ model with parameters $\mu=1.5$, $\lambda=0.05$ and the isotropic $L_{GR}$ model with parameters $ \mu=1.5$, $\lambda=0.05$ correspondingly. Image (c) shows the corresponding noisy Lena image of size $512\times512$ corrupted by Gaussian noise of variance $\sigma=0.1$. Image (g) and (k) show the denoised images of image (c) by the anisotropic $L_{HL}$ model with parameters $\mu=0.005$, $\lambda=0.001$ and the isotropic $L_{HL}$ model with parameters $\mu=0.005$, $\lambda=0.0005$ correspondingly. Image (d) shows the noisy Lena image of size $512\times512$ corrupted by Gaussian noise of variance $\sigma=0.05$. Image (h) and (l) show the  denoised images of (a) by the anisotropic $L_{GY}$ model with parameters $\mu=1.5$, $\lambda=0.05$ and the isotropic $L_{GY}$ model with parameters $\mu=1.5$, $\lambda=0.005 $ correspondingly. Image (m) shows the original 512$\times$512 Man image and image (n) shows the segmented man image by the MS model $L_{MS}$ with parameters $\alpha=5000$, $\lambda=0.1$ and $\epsilon=0.02$. Image (o) shows the original 512$\times$512 tulips image. Image (p) shows the segmented tulips image by the MS model $L_{MS}$  with parameters $\alpha=3000$, $\lambda=0.1$ and $\epsilon=0.02$.}
	\label{fig:my_label}
\end{figure}

\begin{figure}
	\centering
	\subfloat[ Energy]{\includegraphics[width=0.48\textwidth]{./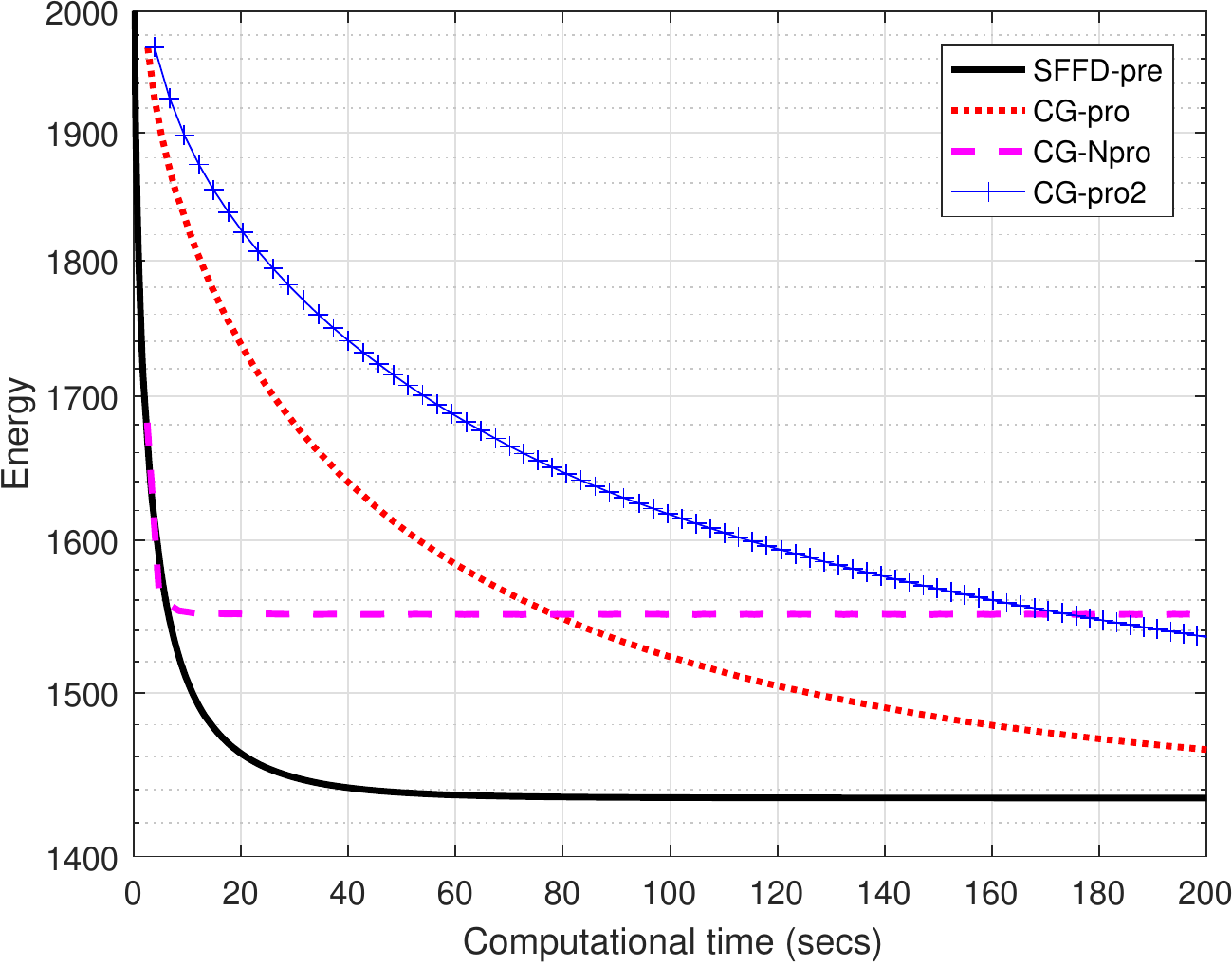}}\ \
	\subfloat[PSNR]{\includegraphics[width=0.48\textwidth]{./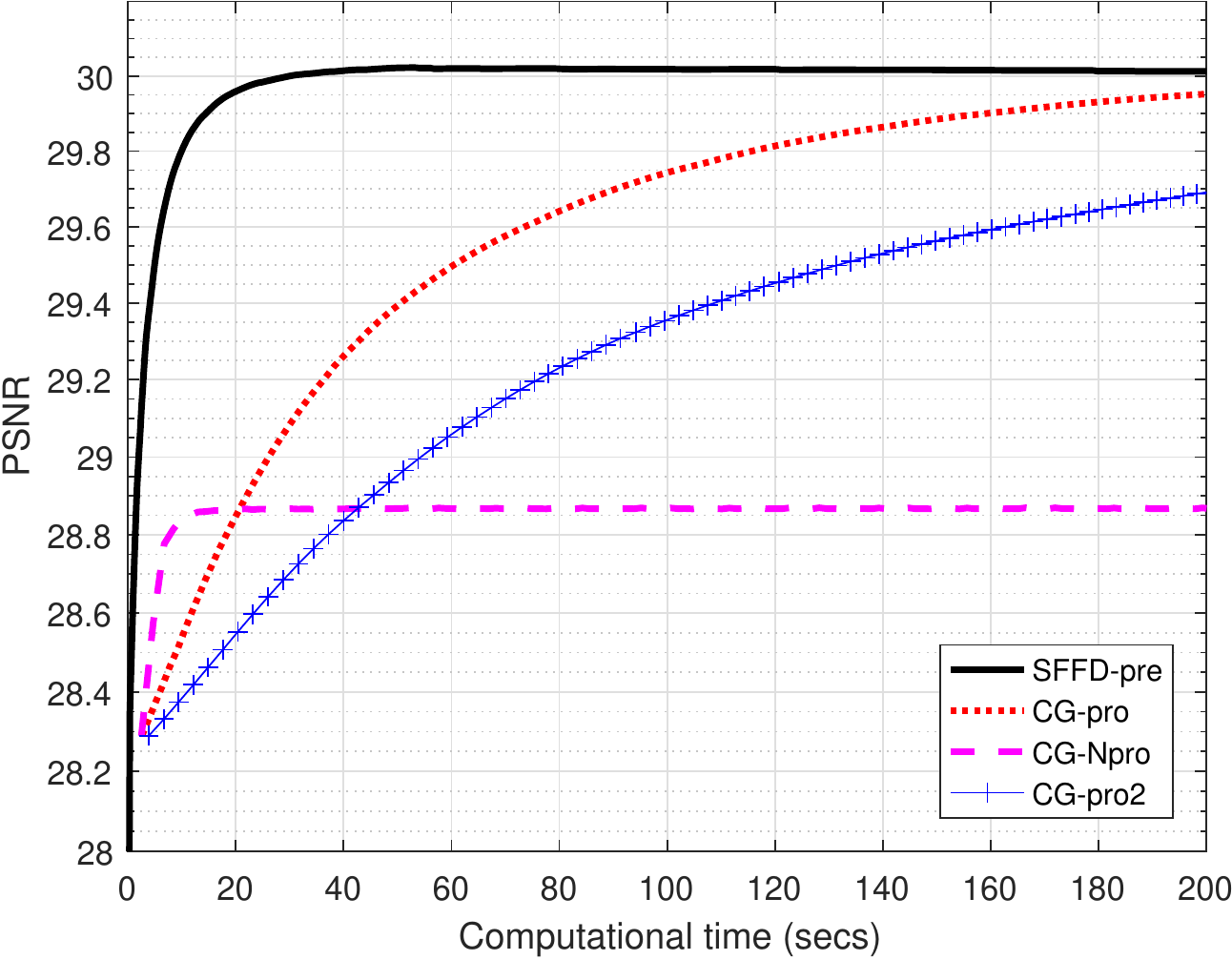}}
	\caption{Figures (a) or (b) shows the comparison with  computational time between preconditioned alternating minimization with 10 times inner symmetric Red-Black Gauss-Seidel (SRBGS) iterations and the conjugate gradient (CG) solver. SFFD-pre denotes the preconditioned alternating minimization by 10 times inner SRBGS preconditioned iterations with SFFD scheme. CG-pro and CG-pro2 denote solving \eqref{update:x} and \eqref{update:y} by CG iterations  with $10^{-3}$ and $10^{-6}$ error stopping criterion correspondingly, while CG-Npro  denotes solving the linear subproblems by CG iterations of the anisotropic $L_{HL}$ model of $10^{-3}$  error stopping criteron  without proximal terms in \eqref{update:x} and \eqref{update:y}. The computation is based on the Lena image of size 512$\times$512 corrupted by Gaussian noise of variance $\sigma=0.1$ for the anistropic $L_{HL}$ model  with parameters$\mu=0.005,\lambda=0.05$. }
	\label{HL:pre&nonpre}
\end{figure}

\begin{figure}
	\centering
	\subfloat[The local linear convergence rate for the anisotropic GY model  $L_{GY}^A$]{\includegraphics[width=0.48\textwidth]{./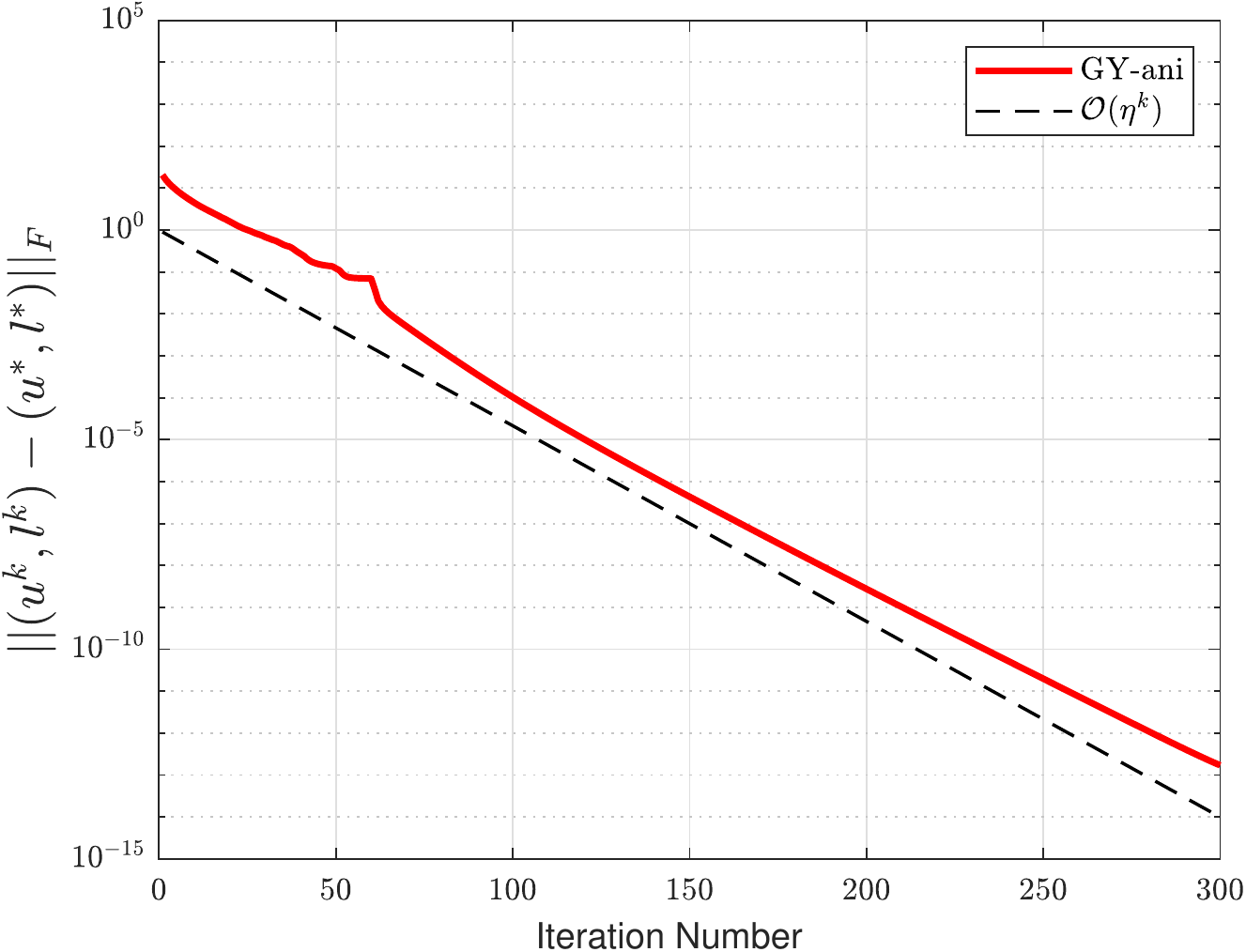}}\ \ 
	\subfloat[Energy descent of some models]{\includegraphics[width=0.48\textwidth]{./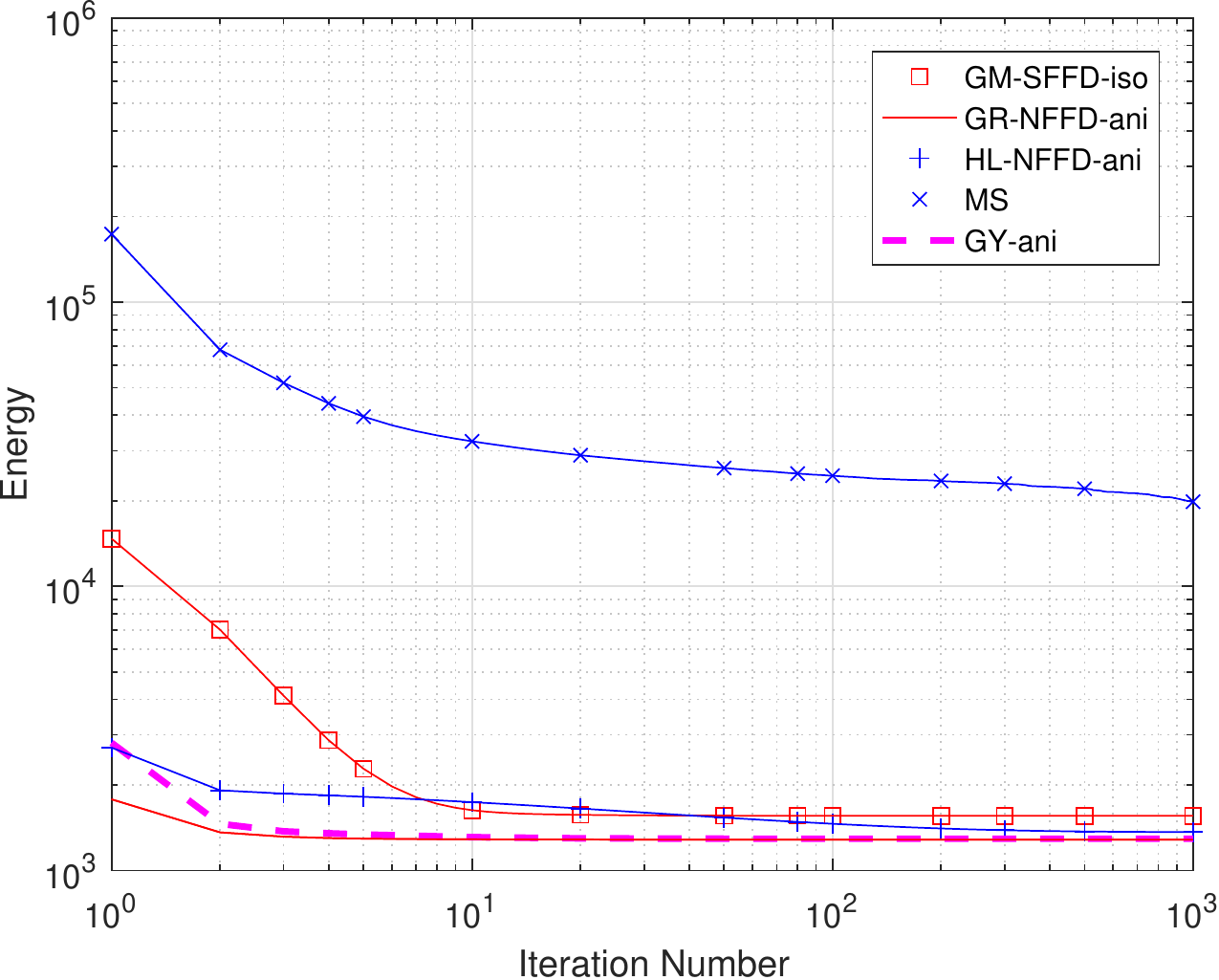}}
	\caption{Figure (a) shows the local linear convergence rate. The computation is by the anistropic GY model $L_{GY}^A$ for the lena image of size 512 $\times$512 with parameters $ \mu = 1.5$, $\lambda = 0.005$ for the Gaussian variance $\sigma=0.05$ cases. Figure (b) shows the energy descent. The computation is by different models for the Monarch image of size $768\times 512$. All models except MS model are used for denoising the noisy Monarch image corrupted by Gaussian noise of variance $\sigma=0.1$ and the MS model $L_{MS}$ is used to segmentation.  The parameters of this models are as follows: $\mu=0.02,\lambda=0.001 $ for the GM-SFFD-iso case (the isotropic $L_{GM}$ model with SFFD), $\mu=1.5,\lambda=0.05 $ for the GR-NFFD-ani case (the anisotropic $L_{GR}$ model with NFFD) and GY-ani cases, $\mu=0.005,\lambda=0.001$ for the HL-NFFD-ani case (the anisotropic $L_{HL}$ model with SFFD) and $\alpha=5000,\lambda=0.1,\epsilon=0.02$ for the MS $L_{MS}$ case. }
	\label{fig:convergence and energy}
\end{figure}



\section{Discussion and Conclusions}\label{sec:conclude}
In this paper, we investigated the proposed framework of preconditioned alternating minimization methods for some typical nonconvex and nonlinear models. With specially designed proximal terms, we can reformulate solving the linear subproblems as the classical preconditioned iterations. We thus can avoid solving the linear subproblems exactly or high accurately. Meanwhile, we can also get the global convergence guarantee. Moreover, we can also obtain high-quality reconstructions including image denoising and segmentation more efficiently as shown in numerics. For the future study, we will consider more general cases including the constraints which are also very important. 

\bibliographystyle{plain}
\bibliography{pEM}

\end{document}